\providecommand{\U}[1]{\protect\rule{.1in}{.1in}}
\newtheorem{theorem}{Theorem}
\newtheorem{condition}[theorem]{Condition}
\newtheorem{definition}[theorem]{Definition}
\newtheorem{example}{Example}
\newtheorem{lemma}[theorem]{Lemma}
\newtheorem{proposition}[theorem]{Proposition}
\newtheorem{remark}[theorem]{Remark}
\numberwithin{equation}{section}
\numberwithin{example}{section}
\numberwithin{theorem}{section}
\newenvironment{proof}[1][Proof]{\noindent\textbf{#1.} }{\ \rule{0.5em}{0.5em}}
\begin{document}

\title{Formulation and properties of a divergence used to compare probability measures without absolute continuity}
\author{Paul Dupuis\thanks{%
Research supported in part by the National Science Foundation (NSF-DMS-1904992).} \  and Yixiang Mao\thanks{%
Research supported in part by the Air Force Office of Scientific Research (FA-9550-18-1-0214).} }
\maketitle

\begin{abstract}
\noindent This paper develops a new divergence that generalizes relative entropy and can be used to compare probability measures 
without a requirement of absolute continuity.
We establish properties of the divergence, and in particular derive and exploit a representation
as an infimum convolution of optimal transport cost and relative entropy. 
Also included are examples of computation and approximation of the divergence,
and the demonstration of properties that are useful when one quantifies model uncertainty.
\end{abstract}


\section{Introduction}

To compare different probabilistic models for a given application, one
needs a notion of \textquotedblleft distance\textquotedblright\ between
the distributions. The specification of this distance is a subtle
issue. Probability models are typically large or infinite dimensional, and the
usefulness of the distance will depend on its mathematical properties. Is it
convenient for analysis and optimization? Does it scale well with system size?

For situations that require an analysis of (probabilistic) model form uncertainly, the
quantity known as relative entropy (or Kullback-Leibler divergence)
is the most widely used such distance. This is true because relative entropy
has all the attractive properties asked for in the last paragraph, and
many more. (Relative entropy is not a true metric since it is not symmetric in
its arguments, but owing to its other attributes it is more widely
used for these purposes than any legitimate metric.)

The definition of relative entropy is as follows. Suppose $S$ is a Polish
space with metric $d(\cdot,\cdot)$ and associated Borel $\sigma$-algebra
$\mathcal{B}$. Let $\mathcal{P}(S)$ be the space of probability measures over
$(S,\mathcal{B})$. If $\mu,\nu\in\mathcal{P}(S)$ and $\mu$ is absolutely continuous with respect to $\nu$ (denoted $\mu
\ll\nu$), then
\[
R(\mu\lVert\nu)\doteq\int_{S}\left(  \log\frac{d\mu}{d\nu}\right)  d\mu
\]
(even though $\log{d\mu}/{d\nu}$ can take both positive and negative values,
as we discuss in the beginning of section \ref{section_2}, the definition is never ambiguous). Otherwise, we set $R(\mu\lVert\nu)=\infty$. 

While we cannot go into
all the reasons why relative entropy is so useful, it is essential that we
describe why it is convenient for the analysis of 
model form uncertainty. This is due to a
dual pair of variational formulas which relate $R(\mu\lVert\nu)$, integrals
with respect $\mu$, and what are called \textbf{risk-sensitive} integrals with respect
to $\nu$. Let $M_{b}(S)$ denote the set of bounded and measurable functions on $S$. Then
\cite[Proposition 1.4.2 and Lemma 1.4.3]{dupell4} gives

\begin{equation}
R(\mu\left\Vert \nu\right.  )=\sup_{g\in M_{b}(S)}\left\{  \int_{S}gd\mu
-\log\int_{S}e^{g}d\nu\right\},   \label{eqn:var_forms}%
\end{equation}
and for any $g\in M_{b}(S)$,
\[
\log\int_{S}e^{g}d\nu=\sup_{\mu
\in\mathcal{P}(S)}\left\{  \int_{S}gd\mu-R(\mu\left\Vert \nu\right.
)\right\}  .
\]
It is immediate from either of these that for $\mu,\nu\in\mathcal{P}(S)$
and $g\in M_{b}(S)$,%
\[
\int_{S}gd\mu\leq R(\mu\left\Vert \nu\right.  )+\log\int_{S}e^{g}d\nu.
\]
If we interpret $\nu$ as the \textbf{nominal}\ or \textbf{design }model
(chosen perhaps on the basis of data or for computational tractability) and
$\mu$ as the \textbf{true} model (or at least a {more accurate} model),
then according to the last display one obtains a bound on an integral with respect to the true model. 
(In fact by introducing a parameter one can obtain bounds that are in some sense optimal\cite{dupkatpanple}.) 
We typically interpret the integral $\int_{S}gd\mu$ as a \textbf{performance measure}, and so we have a bound
on the performance of the system under the true distribution in terms of the
relative entropy distance $R(\mu\left\Vert \nu\right.  )$, plus a
risk-sensitive performance measure under the design model. From this elementary
but fundamental inequality, and by exploiting the helpful qualitative and
quantitative properties of relative entropy, there has emerged a set of tools
that can be used to answer many questions where probabilistic model form
uncertainty is important, including 
\cite{baylov, brecsi,chodup,dupjampet,dupkatpanple,glaxu,hansar,lam,limshawat, arnlau, petjamdup}.

However, relative entropy has one important shortcoming: for the bound to be
meaningful we must have $R(\mu\left\Vert \nu\right.  )<\infty$, which imposes
the requirement of absolute continuity of the true model with respect to the
design model. For various uses, such as model building and model
simplification, this restriction can be significant. In the context of model
building, it can happen that one attempts to fit distributions to data by
comparing an empirical measure constructed using data with the elements of a
parameterized family, such as a collection of Gaussian distributions. In this
case the two distributions one would compare are singular, and so relative
entropy cannot be used. A second example, and one that occurs frequently in
the physical sciences, operations research and elsewhere,
is that a detailed model (such as the population process of a
chemical reaction network, which takes values in a lattice) is approximated by
a simpler process that takes values in the continuum (for example a diffusion
process). For exactly the same reason as in the previous example, these
processes, as well as their corresponding stationary distributions, are not
absolutely continuous.

Because relative entropy is not directly applicable to such problems,
significant effort has been put into investigating alternatives (\cite{blakanmur,blamur} and references therein). 
A class that has attracted some
attention (e.g., in the machine learning community) are the \textit{Wasserstein} or, more generally, 
\textit{optimal transport} distances \cite{kolparthosleroh,racrus,vil}. These distances, which are true
metrics, have certain attractive properties but also some 
shortcomings. The most important shortcomings are: (a) Wasserstein distances
do not in general scale well with respect to system dimension, and (b) such 
distances do not have an interpretation as the dual of a strictly convex function. To
be a little more concrete about point (b), it is
the strict concavity of the mapping
\[
g\rightarrow\int_{S}gd\mu-\log\int_{S}e^{g}d\nu
\]
in the variational representation for $R(\mu\left\Vert \nu\right.  )$ that
leads to tight bounds when applied to problems of control or optimization of
stochastic uncertain systems. In contrast, the analogous variational representation for
Wasserstein type distances involves the mapping $g\rightarrow\int_{S}%
gd\mu-\int_{S}gd\nu$. Point (a) is an issue in applications
to problems from the physical sciences, where large time horizons and large
dimensions are common.

Rather than give up entirely the attractive features of the dual pair
$(R(\mu\left\Vert \nu\right.  ),\log\int_{S}e^{g}d\nu)$, an alternative is to
be more restrictive regarding the class of costs or performance measures for
which bounds are required.  Indeed, the requirement of absolute
continuity in relative entropy is entirely due to the very large class of
functions, $M_{b}(S)$, appearing in (\ref{eqn:var_forms}). For a collection
$\Gamma\subset M_{b}(S)$ one can consider in lieu of $R(\mu\left\Vert
\nu\right.  )$ what we call the $\Gamma$-divergence, which is defined by
\[
G_{\Gamma}(\mu\left\Vert \nu\right.  )\doteq \sup_{g\in\Gamma}\left\{  \int_{S}%
gd\mu-\log\int_{S}e^{g}d\nu\right\}  .
\]
By imposing regularity conditions on $\Gamma$ (e.g., Lipschitz continuity,
additional smoothness) one generates (under mild additional conditions on $\Gamma$)
divergences which relax the absolute continuity condition. Thus one is trading
restrictions on the class of performance measures or observables for which
bounds are valid, for the enlargement of the class of distributions to which
the bounds apply. These divergences are of course not as nice as relative
entropy, but one can prove that they retain versions of its most important
properties. In addition, the dual function (which serves as the cost to be
minimized when considering problems of optimization or control) remains
$\log\int_{S}e^{g}d\nu$. This is important because the corresponding risk-sensitive
optimization and optimal control problems are well studied in the literature.

In our formulation of the $\Gamma$-divergence the underlying idea is that to extend the range of probability measures that can be compared, one must restrict the class of integrands that will be considered. 
However, this leads directly to an interesting connection with the Wasserstein distance
mentioned previously, which is that for suitable collections $\Gamma$ we will 
prove the inf-convolution expression
\begin{align*}
G_{\Gamma}(\mu\left\Vert \nu\right.  )&=\inf_{\gamma\in\mathcal{P}(S)}\left\{
W_{\Gamma}(\mu-\gamma)+R(\gamma\left\Vert \nu\right.  )\right\}  ,
\end{align*}
where $W_{\Gamma}$ is the Wasserstein metric whose dual (sup) formulation uses
the set of functions $\Gamma$. Moreover one recovers relative entropy by taking the
limit $b\rightarrow\infty$ in $G_{b\Gamma}(\mu\left\Vert \nu\right.  )$, which
may be useful if one wants to allow relatively small violations of the absolute
continuity restriction, while at the same time taking advantage of simple
approximations for the Wasserstein distance in the high transportation cost limit.

The organization of the paper is as follows.
In Section 2 we define the $\Gamma$-divergence, and prove the first main result of this paper, which is the inf-convolution formula described above (Theorem \ref{thm:main}). In Section 3, we show several properties of the
$\Gamma$-divergence, and establish a convex duality formula for the $\Gamma$-divergence. 
Section 4 investigates the $\Gamma$-divergence for a special choices of $\Gamma$, which are sets of bounded Lipschitz continuous functions. 
We establish a relation between $\Gamma$-divergence and optimal transport cost, and prove existence and uniqueness for optimizers of variational representations of $\Gamma$-divergence (Theorem \ref{optimizer}), and also a formula for directional derivatives of the $\Gamma$-divergence (Theorem \ref{first_variation}). 
Section 5 considers limits for the $\Gamma$-divergence, and in Section 6 there is a preliminary discussion on how one can apply the $\Gamma$-divergence to obtain uncertainty quantification bounds.  

As a last remark we note that the paper \cite{amakaroiz} defines a ``relaxation'' of
Wasserstein distance by putting in an entropy term of the mass-transfer
matrix. The new divergence so defined is easier to compute than the original
Wasserstein distance, but is not the same as the divergences we develop here.

\section{Definition of the ${\Gamma}$-divergence}\label{section_2}

Throughout this section, $S$ is a Polish space with metric $d(\cdot,\cdot)$
and associated Borel $\sigma$-alegra $\mathcal{B}$. $C_{b}(S)$ denotes the
space of all bounded continuous functions from $S$ to $\mathbb{R}$, and
$M_{b}(S)$ denotes the space of all bounded measurable functions from $S$ to
$\mathbb{R}$. Let $\mathcal{P}(S)$ be the space of probability measures over
$(S,\mathcal{B})$, $\mathcal{M}(S)$ be the space of finite signed (Borel)
measures over $(S,\mathcal{B})$, and $\mathcal{M}_{0}(S)$ be the subspace of
$\mathcal{M}(S)$ whose total mass is $0$. $\overline{\mathbb{R}}%
\doteq\mathbb{R}\cup\{\infty\}$ is the extended real numbers. Throughout this
section, we consider $C_{b}(S)$ equipped with weak topology induced by
$\mathcal{M}(S)$. Thus for $f_n, f \in C_b(S)$, $f_{n}\rightarrow f$ if $\int_{S}f_{n}d\mu
\rightarrow\int_{S}fd\mu$ for all $\mu\in\mathcal{M}(S)$.

We recall that 
\[
R(\mu\lVert\nu)\doteq\int_{S}\left(  \log\frac{d\mu}{d\nu}\right)  d\mu
\]
whenever $\mu$ is absolutely continuous with respect to $\nu$. For $t\in\mathbb{R}$ define $t^{-}%
\doteq-(t\wedge0)$. Since the function $s(\log s)^{-}$ is bounded for
$s\in\lbrack0,\infty)$, whenever $\mu\ll\nu$,%

\[
\int_{S} \left(  \log\frac{d\mu}{d\nu}\right)  ^{-}d\mu=\int_{S} \frac{d\mu
}{d\nu}\left(  \log\frac{d\mu}{d\nu}\right)  ^{-}d\nu<\infty.
\]
Thus $R(\mu\lVert\nu)$ is always well defined.

We recall the Donsker-Varadhan variational representation (\ref{eqn:var_forms})
for relative
entropy.
We will use equation (\ref{eqn:var_forms}) as an equivalent
characterization of $R(\cdot\lVert\nu)$ on $\mathcal{P}(S)$, and consider an extension to
$\mathcal{M}(S)$ in the following lemma. With an abuse of notation, we will also call the extended function $R$.  
To set up the functionals of interest on a space with the proper structure (locally convex Hausdorff space),
we will use that 
\begin{equation}
\sup_{g\in C_{b}(S)}\left\{  \int_{S}gd\mu-\log\int_{S}e^{g}d\nu\right\}
=\sup_{g\in M_{b}(S)}\left\{  \int_{S}gd\mu-\log\int_{S}e^{g}d\nu\right\}
\label{eqn:altchar}%
\end{equation}
\cite[Lemma 1.4.3(a)]{dupell4}(It is worth notating in this reference, (\ref{eqn:altchar}) is only proved for $\mu,\nu\in \mathcal{P}(S)$. However, the exact same argument applies for $\mu,\nu\in\mathcal{M}(S)$, and we are using the latter version here). The fact that one obtains the same value
when supremizing over the smaller class $C_{b}(S)$ is closely related to the fact that 
$R(\mu\lVert\nu)$ is finite only when $\mu\ll\nu$.

\begin{lemma}
\label{lem:RE}Consider $R:\mathcal{M}(S)\times\mathcal{P}(S)\rightarrow
(-\infty,\infty]$ defined by (\ref{eqn:var_forms}). Then

\begin{enumerate}
\item $R(\mu\lVert\nu)\geq0$ and $R(\mu\lVert\nu)=0$ if and only if $\mu=\nu$,

\item $R(\cdot\lVert\cdot)$ is convex,

\item $R(\mu\lVert\nu)=\infty$ if $\mu\in\mathcal{M}(S)\backslash
\mathcal{P}(S)$.
\end{enumerate}
\end{lemma}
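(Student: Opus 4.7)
I would prove item~3 first (so that finiteness of $R$ can be used to force $\mu\in\mathcal{P}(S)$), then item~1, and finally item~2. The only tools needed are the variational representation in (\ref{eqn:var_forms}), Jensen's inequality, and the Hahn-Jordan decomposition of signed measures.

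\textbf{Item~3.} Suppose $\mu\in\mathcal{M}(S)\setminus\mathcal{P}(S)$. Two (possibly overlapping) cases arise. If $c\doteq\mu(S)\neq 1$, testing (\ref{eqn:var_forms}) with the constant function $g\equiv K$ gives $Kc-K=K(c-1)$; choosing $K$ with the same sign as $c-1$ and letting $|K|\to\infty$ forces the supremum to be $+\infty$. If instead $\mu$ has a non-trivial negative part, the Hahn-Jordan decomposition yields a Borel set $A$ with $\mu(A)<0$, and testing with $g=-K\mathbb{1}_A$ for $K>0$ gives
\[
-K\mu(A)-\log\!\left(e^{-K}\nu(A)+\nu(A^c)\right),
\]
which tends to $+\infty$ as $K\to\infty$ because $-K\mu(A)\to+\infty$ while the log term is bounded above by $\log 1=0$.

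\textbf{Item~1.} Taking $g\equiv 0$ in (\ref{eqn:var_forms}) yields $R(\mu\lVert\nu)\geq 0$. When $\mu=\nu$, Jensen's inequality applied to the convex exponential gives $\int g\,d\nu\leq\log\int e^{g}\,d\nu$ for every $g\in M_b(S)$, so every bracketed term in (\ref{eqn:var_forms}) is non-positive and $R(\mu\lVert\mu)=0$. Conversely, assume $R(\mu\lVert\nu)=0$. By item~3 this already forces $\mu\in\mathcal{P}(S)$. For each $g\in M_b(S)$ the function $\phi(t)\doteq \log\int e^{tg}\,d\nu-t\int g\,d\mu$ is smooth on $\mathbb{R}$ (boundedness of $g$ legitimizes differentiation under the integral), satisfies $\phi(0)=0$, and is non-negative by applying (\ref{eqn:var_forms}) with $tg$ in place of $g$. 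Hence $t=0$ is a global minimum and $\phi'(0)=\int g\,d\nu-\int g\,d\mu=0$; since $g\in M_b(S)$ was arbitrary, $\mu=\nu$.

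\textbf{Item~2 and expected obstacle.} For each fixed $g\in M_b(S)$ the functional $(\mu,\nu)\mapsto \int g\,d\mu-\log\int e^{g}\,d\nu$ is affine in $\mu$ and convex in $\nu$ (the latter because $\nu\mapsto\int e^{g}\,d\nu$ is affine and $-\log$ is convex), hence is jointly convex on $\mathcal{M}(S)\times\mathcal{P}(S)$; the supremum in (\ref{eqn:var_forms}) inherits convexity. The mildly delicate step is the differentiation in item~1 when $\mu$ is signed, but uniform boundedness of $g$ makes dominated convergence routine, so I do not anticipate any serious obstacle.
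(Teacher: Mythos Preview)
Your proof is correct and, for item~3, follows essentially the same argument as the paper: test with constants when $\mu(S)\neq 1$, and with $-K\mathbb{1}_A$ on a set of negative $\mu$-mass otherwise. The only difference is that the paper, having established item~3, simply cites \cite{dupell4} for items~1 and~2 (since $R(\mu\lVert\nu)<\infty$ now forces $\mu\in\mathcal{P}(S)$, where the classical results apply), whereas you give self-contained arguments---your differentiation trick $\phi'(0)=0$ for the ``only if'' part of item~1 is a clean alternative to invoking the standard theory.
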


\begin{proof}
If we prove item 3, then items 1 and 2 will follow from the corresponding
statements when $\mu$ is restricted to $\mathcal{P}(S)~$\cite{dupell4}. If
$m=\mu(S)\neq1$, then taking $g(x)\equiv c$ a constant,
\[
\int_{S}gd\mu-\log\int_{S}e^{g}d\nu=c\mu(S)-c=c(m-1).
\]
Since $m\neq1$ and $c\in\mathbb{R}$, the right hand side of equation
(\ref{eqn:var_forms}) is $\infty$.

Suppose next that $\mu(S)=1$ but $\mu\in\mathcal{M}(S)\backslash
\mathcal{P}(S)$. Then there exist sets $A,B\in\mathcal{B}$ such that $A\cap
B=\varnothing,A\cup B=S,\mu(A)<0$ and $\mu(B)>0$. For $c>0$, let $g(x)=-c$ for
$x\in A$ and $g(x)=0$ for $x\in B$. Then
\[
\int_{S}gd\mu-\log\int_{S}e^{g}d\nu=c\left\vert \mu(A)\right\vert -C_{c},
\]
where $C_{c}\in(\log\nu(B),0)$ for all $c$. Letting $c\rightarrow\infty$ and
using (\ref{eqn:altchar}) shows $R(\mu\lVert\nu)=\infty$.
\end{proof}

\vspace{.5\baselineskip}
Though relative entropy has very attractive regularity and optimization
properties, as noted $R(\mu\lVert\nu)$ is finite if and only if $\mu\ll\nu$.
As such, it cannot be used to give a meaningful notion of ``distance'' without this absolute continuity restriction. In
order to define a meaningful divergence for a pair of probability measures
that are not mutually absolute continuous, but at the same time not to lose the
useful properties of the \textquotedblleft dual\textquotedblright\ function
$g\rightarrow\log\int_{S}e^{g}d\nu$ appearing in (\ref{eqn:var_forms}), a natural
approach is to restrict the set of test functions in the variational formula.
We define a criterion for the classes of \textquotedblleft
admissible\textquotedblright\ test functions we want to use.

\begin{definition}
\label{access} Let $\Gamma$ be a subset of $C_{b}(S)$ endowed with the inherited weak topology. We call $\Gamma$
\textbf{admissible} if the following hold.

1) $\Gamma$ is convex and closed.

2) $\Gamma$ is symmetric in that $g\in\Gamma$ implies $-g\in\Gamma$, and
$\Gamma$ contains all constant functions.

3) $\Gamma$ is determining for $\mathcal{P}(S)$, i.e., for any $\mu,\nu
\in\mathcal{P}(S)$ with $\mu\neq\nu$, there exists $g\in\Gamma$ such that%
\[
\int_{S} gd\mu\neq\int_{S} gd\nu.
\]

\end{definition}

We next define a new divergence by restricting the class of test functions in
the definition of relative entropy.

\begin{definition}
\label{def:defofV}Fix $\nu\in \mathcal{P}(S)$. For $\mu\in\mathcal{M}(S)$, we define the
$\mathbf{\Gamma}$\textbf{-divergence} associated with the admissible set $\Gamma$ by%
\[
G_{\Gamma}(\mu\lVert\nu)\doteq\sup_{g\in\Gamma}\left\{  \int_{S} gd\mu-\log\int_{S}
e^{g}d\nu\right\}  .
\]
We also define the following related quantity. For $\eta\in\mathcal{M}(S)$ let%
\[
W_{\Gamma}(\eta)\doteq\sup_{g\in\Gamma}\left\{  \int_{S} gd\eta\right\}
=\sup_{g\in\mathcal{C}_{b}(S)}\left\{  \int_{S} gd\eta-\infty1_{\{g\in\Gamma^{c}%
\}}\right\}  .
\]

\end{definition}

When $\Gamma$ is clear based on context, we will drop the subscript from
$G_{\Gamma}$ and $W_{\Gamma}$. Using a similar argument as in Lemma
\ref{lem:RE}, one can show that $G_{\Gamma}(\mu\lVert\nu)=\infty$ if $\mu(S)\neq 1$. The next theorem
states an important property of the ${\Gamma}$-divergence, which is that it
can be written as a convolution involving relative entropy and $W_{\Gamma}$.

\begin{theorem}
\label{thm:main} Assume $\Gamma$ is an admissible set. Then for $\mu\in \mathcal{M}(S)$, $\nu\in\mathcal{P}(S)$,
\[
G_{\Gamma}(\mu\lVert\nu)=\inf_{\gamma\in\mathcal{P}(S)}\left\{  R(\gamma
\lVert\nu)+W_{\Gamma}(\mu-\gamma)\right\}
\]

\end{theorem}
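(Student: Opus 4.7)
My plan is to establish this identity via convex duality, applying the Fenchel--Moreau biconjugate theorem on the locally convex Hausdorff space $(\mathcal{M}(S),\sigma(\mathcal{M}(S),C_b(S)))$, whose continuous dual is $C_b(S)$. Set
\[
H(\mu)\doteq\inf_{\gamma\in\mathcal{M}(S)}\{R(\gamma\lVert\nu)+W_\Gamma(\mu-\gamma)\}.
\]
By Lemma \ref{lem:RE} the infimum is effectively over $\mathcal{P}(S)$, so $H(\mu)$ coincides with the right-hand side of the theorem. The strategy is to compute the biconjugate $H^{**}$, show it equals $G_\Gamma(\cdot\lVert\nu)$, and then confirm $H=H^{**}$ via Fenchel--Moreau.

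For the first step, the universal identity that the convex conjugate of an inf-convolution equals the sum of conjugates gives $H^{*}=R(\cdot\lVert\nu)^{*}+W_\Gamma^{*}$ as functionals on $C_b(S)$. The Donsker--Varadhan formula (\ref{eqn:var_forms}) together with (\ref{eqn:altchar}) shows that $R(\cdot\lVert\nu)^{*}(g)=\log\int_S e^g\,d\nu$. For the second conjugate, $W_\Gamma$ is the support function of the closed convex set $\Gamma$ in the duality between $C_b(S)$ and $\mathcal{M}(S)$: for $g\in\Gamma$ one has $\int g\,d\eta\leq W_\Gamma(\eta)$ for every $\eta$, so $W_\Gamma^{*}(g)=0$ (attained at $\eta=0$); for $g\notin\Gamma$, Hahn--Banach separation in this dual pair produces $\eta_0\in\mathcal{M}(S)$ with $\int g\,d\eta_0>W_\Gamma(\eta_0)$, and positive scaling of $\eta_0$ forces $W_\Gamma^{*}(g)=+\infty$. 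Hence $W_\Gamma^{*}=\chi_\Gamma$, the convex indicator of $\Gamma$, and
\[
H^{*}(g)=\log\int_S e^g\,d\nu+\chi_\Gamma(g),\qquad H^{**}(\mu)=\sup_{g\in\Gamma}\left\{\int_S g\,d\mu-\log\int_S e^g\,d\nu\right\}=G_\Gamma(\mu\lVert\nu).
\]

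It remains to show $H=H^{**}$, for which the Fenchel--Moreau theorem requires $H$ to be proper, convex, and lower semicontinuous on $\mathcal{M}(S)$ with the weak topology above. Properness follows because $W_\Gamma\geq 0$ (using $0\in\Gamma$), $R\geq 0$, and $H(\nu)=0$ via the choice $\gamma=\nu$; convexity is immediate from the general fact that inf-convolutions of convex functionals are convex.

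The main obstacle is the weak lower semicontinuity of $H$, where a direct argument is required because inf-convolutions need not preserve lower semicontinuity in general. Suppose $\mu_n\to\mu$ weakly with $L\doteq\liminf_n H(\mu_n)<\infty$; after passing to a subsequence along which $H(\mu_n)\to L$, select near-minimizers $\gamma_n\in\mathcal{P}(S)$ with $R(\gamma_n\lVert\nu)+W_\Gamma(\mu_n-\gamma_n)\leq H(\mu_n)+1/n$. Because $W_\Gamma\geq 0$ we have $\sup_n R(\gamma_n\lVert\nu)<\infty$, and the classical weak compactness of sublevel sets of $R(\cdot\lVert\nu)$ in $\mathcal{P}(S)$ yields a further subsequential weak limit $\gamma^{*}$. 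Since $\mu_n-\gamma_n\to\mu-\gamma^{*}$, combining the weak lower semicontinuity of $R(\cdot\lVert\nu)$ and of $W_\Gamma$ (a supremum of continuous linear functionals) with super-additivity of $\liminf$ produces $H(\mu)\leq R(\gamma^{*}\lVert\nu)+W_\Gamma(\mu-\gamma^{*})\leq L$, completing the proof.
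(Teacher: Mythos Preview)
Your proof is correct and shares its core with the paper's argument: both set up the dual pair $(C_b(S),\mathcal{M}(S))$, identify $R(\cdot\lVert\nu)^*=\log\int e^{(\cdot)}d\nu$ and $W_\Gamma^*=\chi_\Gamma$, and close with the same lower semicontinuity argument (near-minimizers $\gamma_n$, tightness from a uniform relative entropy bound, subsequential limit, lsc of $R$ and $W_\Gamma$).

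The difference is in which direction the duality is run. The paper works on $C_b(S)$, writes $G_\Gamma=(H_1+H_2)^*$ with $H_1(g)=\log\int e^g d\nu$ and $H_2=\chi_\Gamma$, and invokes an external result (Lemma~\ref{inf-cov}, \cite{botgrawan}) stating $(H_1+H_2)^*=\overline{H_1^*\Box H_2^*}$; the lsc argument then removes the closure. You instead start on $\mathcal{M}(S)$, use the elementary and unconditional identity $(f_1\Box f_2)^*=f_1^*+f_2^*$ to compute $H^*$, and then invoke Fenchel--Moreau directly, with the same lsc argument supplying the needed hypothesis. Your route is slightly more self-contained, since it avoids quoting the conjugate-of-sum lemma (which is itself essentially Fenchel--Moreau applied to the inf-convolution). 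One small citation slip: the formula $R(\cdot\lVert\nu)^*(g)=\log\int e^g d\nu$ is the dual direction of Donsker--Varadhan, stated in the paper just after~(\ref{eqn:var_forms}) and again as Lemma~\ref{REvar}, rather than (\ref{eqn:var_forms}) itself.
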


\begin{remark}
The theorem tells us that by restricting the set of test functions in the
variational representation of relative entropy, we get a quantity which is an
inf-convolution of relative entropy and a metric. It will be pointed out in
Section \ref{Wass} that by restricting $\Gamma$ to Lipschitz functions
with respect to a cost function $c(x,y)$ that satisfies some specified
conditions, $W_{\Gamma}(\mu-\nu)$ will be the corresponding optimal transport
cost from $\mu$ to $\nu$.
\end{remark}

The rest of this section is focused on the proof of Theorem \ref{thm:main}. In
order to do this, we need a few definitions and also will find it convenient
to consider a more general setting.

\begin{definition}
Points $x$ and $y$ in a topological space $Y$ can be \textbf{separated} if
there exists an open neighborhood $U$ of $x$ and an open neighborhood $V$ of
$y$ such that $U$ and $V$ are disjoint ($U\cap V=\varnothing$). $Y$ is a
$\mathbf{Hausdorff}$ space if all distinct points in $Y$ are pairwise separable.
\end{definition}

\begin{definition}
A subset $C$ of a topological vector space $Y$ over the number field
$\mathbb{R}$ is

1. $\mathbf{convex}$ if for any $x,y\in C$ and any $t\in\lbrack0,1]$,
$tx+(1-t)y\in C$,

2. $\mathbf{balanced}$ if for all $x\in C$ and any $\lambda\in\mathbb{R}$ with
$|\lambda|\leq1$, $\lambda x\in C$,

3. $\mathbf{absorbant}$ if for all $y\in Y$, there exists $t>0$ and $x\in C$
such that $y=tx$.

A topological vector space $Y$ is called $\mathbf{locally}$ $\mathbf{convex}$
if the origin has a local topological basis of convex, balanced and absorbent sets.
\end{definition}

\begin{definition}
For a topological vector space $Y$ over the number field $\mathbb{R}$, its
$\mathbf{topological\ dual\ space}$ $Y^{*}$ is defined as the space of all
continuous linear functionals ${\displaystyle \varphi:Y\to{\mathbb{R} }}$.

The $\mathbf{weak^{\ast}\ topology}$ on $Y^{\ast}$ is the topology induced by
$Y$. In other words, it is the coarsest topology such that functional
$y:Y^{\ast}\rightarrow\mathbb{R}$, $y(\varphi)=\varphi(y)$ is continuous in
$Y^{\ast}$.

For $y\in Y$ and $\varphi\in Y^{\ast}$, we also write $\langle y,\varphi
\rangle\doteq\varphi(y)=y(\varphi)$.
\end{definition}

Now let $Y$ be a Hausdorff locally convex space with $Y^{\ast}$ being its
topological dual space and endowed with the weak* topology.

\begin{definition}
For a function $f:Y\rightarrow\overline{\mathbb{R}}$, its $\mathbf{convex}%
\ \mathbf{dual}$ $f^{\ast}:Y^{\ast}\rightarrow\overline{\mathbb{R}}$ is
defined by%
\[
f^{\ast}(z)=\sup_{y\in Y}\left\{  \langle y,z\rangle-f(y)\right\}  .
\]

\end{definition}

\begin{definition}
Let $f_{1},f_{2}:Y\rightarrow\overline{\mathbb{R}}$ be two functions. We
define the inf-convolution of $f_{1}$ and $f_{2}$ by%
\[
\left[  f_{1}\Box f_{2}\right]  (y)\doteq\inf_{y_{1}\in Y}\{f_{1}(y_{1}%
)+f_{2}(y-y_{1})\}.
\]

\end{definition}

\begin{definition}
For a function $f:Y\rightarrow\overline{\mathbb{R}}$ the
$\mathbf{lower\ semicontinuous\ hull}$ $\overline{f}$ is defined by%
\[
\overline{f}(x)\doteq\sup\{g(x):g\leq f,g:Y\rightarrow\overline{\mathbb{R}%
}\ is\ continuous\}.
\]

\end{definition}

\begin{definition}
A convex function $f:Y\rightarrow\overline{\mathbb{R}}$ is \textbf{proper} if
there exists $y\in Y$ such that $f(y)<\infty$. The \textbf{domain} of a
convex, proper funciton $f$ is defined by%
\[
\mathrm{dom}(f)\doteq\{y\in Y:f(y)<\infty\}.
\]

\end{definition}

Now let us introduce an important lemma. 

\begin{lemma}
\label{inf-cov} \cite[Theorem 2.3.10]{botgrawan} Let $f_{i}:Y\rightarrow
\overline{\mathbb{R}}$ be convex, proper and lower-semicontinuous functions
fulfilling $\bigcap_{i=1}^{m}\mathrm{dom}(f_{i})\neq\varnothing$. Then one has%
\[
\left(  \sum_{i=1}^{m}f_{i}\right)  ^{\ast}=\overline{f_{1}^{\ast}\Box
\cdots\Box f_{m}^{\ast}}.
\]

\end{lemma}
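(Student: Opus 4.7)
The plan is to recognize the claimed identity as a Fenchel--Moreau inf-convolution duality and apply Lemma~\ref{inf-cov}. I would take as the underlying space $Y = C_b(S)$ equipped with the weak topology induced by $\mathcal{M}(S)$; this makes $Y$ a Hausdorff locally convex space whose topological dual is exactly $\mathcal{M}(S)$ with the weak$^\ast$ topology. On $Y$ I would introduce
\[
f_1(g) \doteq \log\int_S e^g\, d\nu, \qquad f_2(g) \doteq \begin{cases} 0 & g\in\Gamma,\\ +\infty & g\notin\Gamma.\end{cases}
\]
A direct computation then identifies the conjugates of interest: $(f_1+f_2)^\ast(\mu) = \sup_{g\in\Gamma}\{\int_S g\, d\mu - \log\int_S e^g\, d\nu\} = G_\Gamma(\mu\lVert\nu)$; $f_2^\ast(\eta) = W_\Gamma(\eta)$ by definition; and $f_1^\ast(\eta) = R(\eta\lVert\nu)$, using (\ref{eqn:altchar}) together with the extension of $R(\cdot\lVert\nu)$ to $\mathcal{M}(S)$ furnished by Lemma~\ref{lem:RE} and (\ref{eqn:var_forms}).

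Next I would verify the hypotheses of Lemma~\ref{inf-cov}. Convexity of $f_1$ is H\"older's inequality; convexity, properness and weak lower-semicontinuity of $f_2$ are immediate from the admissibility of $\Gamma$ (convex, weakly closed, containing $0$). The key check is weak lower-semicontinuity of $f_1$ on $C_b(S)$, which I would obtain by rewriting $f_1$ via the Donsker--Varadhan dual formula as $\sup_{\mu\in\mathcal{P}(S)}\{\int g\, d\mu - R(\mu\lVert\nu)\}$; each summand is a weakly continuous affine functional of $g$, and a supremum of continuous functions is lsc. Properness and nonempty intersection of the domains are both witnessed by $g\equiv 0\in\Gamma$. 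Lemma~\ref{inf-cov} then produces
\[
G_\Gamma(\mu\lVert\nu) = \overline{R(\cdot\lVert\nu)\,\Box\, W_\Gamma}(\mu),
\]
where the lower semicontinuous hull is taken on $\mathcal{M}(S)$ with the weak$^\ast$ topology.

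The main obstacle is removing this hull, i.e.\ showing that $h\doteq R(\cdot\lVert\nu)\,\Box\, W_\Gamma$ is already weak$^\ast$ lsc on $\mathcal{M}(S)$. Given $\mu_n \to \mu$ weakly$^\ast$ with $\liminf_n h(\mu_n) = L < \infty$, I would pick near-optimal $\gamma_n\in\mathcal{P}(S)$---by Lemma~\ref{lem:RE}(3) the infimum defining $h$ is effectively over $\mathcal{P}(S)$---and use $W_\Gamma \ge 0$ (since $0\in\Gamma$) to conclude that $\{R(\gamma_n\lVert\nu)\}$ is bounded. Because $S$ is Polish, sublevel sets of $R(\cdot\lVert\nu)$ are weakly compact in $\mathcal{P}(S)$, so along a subsequence $\gamma_{n_k}\to\gamma\in\mathcal{P}(S)$, and hence $\mu_{n_k}-\gamma_{n_k}\to\mu-\gamma$ weakly$^\ast$ in $\mathcal{M}(S)$. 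Weak$^\ast$ lower-semicontinuity of $R(\cdot\lVert\nu)$ and of $W_\Gamma$ (each a supremum of weakly$^\ast$ continuous linear functionals, minus a constant in the former case) then yields $h(\mu) \le R(\gamma\lVert\nu) + W_\Gamma(\mu-\gamma) \le L$, so $h = \overline{h}$. Combining this with the output of Lemma~\ref{inf-cov} and restricting the infimum to $\gamma\in\mathcal{P}(S)$ (again by Lemma~\ref{lem:RE}(3)) produces the stated formula.
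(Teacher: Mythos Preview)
Your proposal does not prove the stated lemma. Lemma~\ref{inf-cov} is a general convex-analysis identity (the conjugate of a sum equals the lsc hull of the inf-convolution of the conjugates), quoted from \cite{botgrawan}; the paper gives no proof of it. What you have written is instead a proof of Theorem~\ref{thm:main}, the inf-convolution representation of $G_\Gamma(\mu\lVert\nu)$, obtained precisely \emph{by invoking} Lemma~\ref{inf-cov}---as your own first sentence makes explicit. As an argument for Lemma~\ref{inf-cov} this is circular: you assume the result to derive a consequence of it.

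If your intended target was actually Theorem~\ref{thm:main}, then your argument is correct and follows the paper's proof almost line for line: introduce the two functionals on $C_b(S)$, check the hypotheses of Lemma~\ref{inf-cov}, identify the conjugates with $R(\cdot\lVert\nu)$ and $W_\Gamma$ via (\ref{eqn:var_forms})--(\ref{eqn:altchar}) and Definition~\ref{def:defofV}, and finally remove the lsc hull by a tightness/subsequence argument exploiting weak compactness of relative-entropy sublevel sets. The only difference is that the paper asserts $H_1$ is weakly continuous whereas you establish the (sufficient) weak lower semicontinuity of $f_1$ via the Donsker--Varadhan dual representation; your route is arguably cleaner, but both suffice for the application of Lemma~\ref{inf-cov}.
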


In our use we take $Y=C_{b}(S)$ equipped with topology induced by
$\mathcal{M}(S)$, i.e., the topological basis around $g\in Y$ is taken as sets
of the form%

\[
\left\{  f\in Y:\int_{S} fd\mu_{k}\in\left(  \int_{S}gd\mu_{k}-\epsilon_{k}%
,\int_{S}gd\mu_{k}+\epsilon_{k}\right)  ,k=1,2,\dots,m\right\}  ,
\]
where $m\in\mathbb{N},\{\mu_{k}\}_{k=1,2,\dots,m}\subset\mathcal{M}(S)$ and
$\epsilon_{k}>0,k=1,2,\dots,m$ are arbitrary. It can be easily verified that
under this topology, $C_{b}(S)$ is a Hausdorff locally convex space, with
$C_{b}(S)^{\ast}=\mathcal{M}(S)$ \cite[Theorem 3.10]{rud}. 
For $g\in C_{b}(S)$ and $\mu\in
\mathcal{M}(S)$, we define the bilinear form%

\[
\langle g,\mu\rangle\doteq\int_{S}gd\mu.
\]

We are now ready to prove the main theorem.

\vspace{.5\baselineskip}
\begin{proof}
[Proof of Theorem \ref{thm:main}]Define $H_{1},H_{2}:C_{b}(S)\rightarrow
\overline{\mathbb{R}}$ by%
\[
H_{1}(g)\doteq\log\int_{S}e^{g}d\nu\text{ and }H_{2}(g)\doteq\infty
1_{\Gamma^{c}}(g).
\]
Then
\begin{align*}
G_{\Gamma}(\mu\lVert\nu) &  =\sup_{g\in\Gamma}\left\{  \int_{S}gd\mu-\log
\int_{S}e^{g}d\nu\right\}  \\
&  =\sup_{g\in C_{b}(S)}\left\{  \int_{S}gd\mu-\log\int_{S}e^{g}d\nu
-\infty1_{\Gamma^{c}}(g)\right\}  \\
&  =\left(  H_{1}+H_{2}\right)  ^{\ast}(\mu).
\end{align*}

Notice that $\{0\}\in\mathrm{dom}(H_{1})\cap\mathrm{dom}(H_{2})\neq
\varnothing$, and both $H_{1}$ and $H_{2}$ are proper and convex. For
lower-semicontinuity, under the topology induced by $\mathcal{M}(S)$, $H_{1}$ is
actually continuous, and $H_{2}$ is lower semicontinuous since $\Gamma$ is
closed. Thus,
by Lemma \ref{inf-cov}%
\[
G_{\Gamma}(\mu\lVert\nu)=(H_{1}+H_{2})^{\ast}(\mu)=[\overline{H_{1}^{\ast}\Box
H_{2}^{\ast}}](\mu).
\]
By equation (\ref{eqn:var_forms}) and the definition of $W_{\Gamma}$, we know that%
\[
R(\mu\lVert\nu)=H_{1}^{\ast}(\mu)\text{ and }W_{\Gamma}(\eta)=H_{2}^{\ast
}(\eta).
\]
In the following display, the first equality is due to the definition of
inf-convolution, and the second is since $R(\gamma\lVert\nu)<\infty$ only when
$\gamma\in\mathcal{P}(S)$:%
\begin{align*}
H_{1}^{\ast}\Box H_{2}^{\ast}(\mu)  &  =\inf_{\gamma\in\mathcal{M}(S)}\left\{
R(\gamma\lVert\nu)+W_{\Gamma}(\mu-\gamma)\right\} \\
&  =\inf_{\gamma\in\mathcal{P}(S)}\left\{  R(\gamma\lVert\nu)+W_{\Gamma}%
(\mu-\gamma)\right\}  .
\end{align*}

Thus the last thing we need to prove is that $H_{1}^{\ast}\Box H_{2}^{\ast}$
is lower semicontinuous. Note that relative entropy is lower semicontinuous in
the first argument in the weak topology \cite[Lemma 1.4.3 (b)]{dupell4}, and
$W_{\Gamma}$ is lower semicontinuous in the weak topology since
it is the supremum of a collection of linear functionals. Let%
\[
F(\mu)\doteq H_{1}^{\ast}\Box H_{2}^{\ast}(\mu)=\inf_{\gamma\in\mathcal{P}%
(S)}\left\{  R(\gamma\lVert\nu)+W_{\Gamma}(\mu-\gamma)\right\}  .
\]
\noindent Consider any sequence $\mu_{n}\Rightarrow\mu$ with $\mu_n,\mu\in\mathcal{M}(S)$. Here ``$\Rightarrow$'' means convergence in the weak$^*$ topology, i.e., for any $f\in C_b(S)$, $\int f d\mu_n\to\int f d\mu$. Let $\varepsilon>0$, and
for each $\mu_n$ let $\gamma_n$ satisfy%
\[
R(\gamma_{n}\lVert\nu)+W_{\Gamma}(\mu_{n}-\gamma_{n})\leq F(\mu_{n}%
)+\varepsilon.
\]
\noindent We want to show that%
\begin{equation}
\liminf_{n\rightarrow\infty}F(\mu_{n})\geq F(\mu). \label{eqn:lsc}%
\end{equation}
If $\liminf_{n\rightarrow\infty} F(\mu_n)=\infty$, the inequality above holds
automatically. Assuming $\liminf_{n\rightarrow\infty} F(\mu_n)<\infty$, let
$n_k$ be a subsequence such that%

\[
\lim_{k\rightarrow\infty}F(\mu_{n_{k}})=\liminf_{n\rightarrow\infty}F(\mu
_{n}).
\]
Notice that%
\[
R(\gamma_{n_{k}}\lVert\nu)\leq R(\gamma_{n_{k}}\lVert\nu)+W_{\Gamma}%
(\mu_{n_{k}}-\gamma_{n_{k}})\leq F(\mu_{n_{k}})+\varepsilon.
\]
Since $\{F(\mu_{n_{k}})\}_{k\geq1}$ is bounded, we know that $\{\gamma_{n_{k}%
}\}_{k\geq1}$ is tight \cite[Lemma 1.4.3(c)]{dupell4}. Then we can take a further
subsequence that converges weakly. For simplicity of notation, let $n_{k}$
denote this subsequence, and let $\gamma_{\infty}$ denote the weak limit of
$\gamma_{n_{k}}$. Then using the lower semicontinuity of $R(\cdot\lVert\nu)$
on $\mathcal{P}(S)$ and the lower semicontinuity of $W_{\Gamma}$ on
$\mathcal{M}(S)$,%
\begin{align*}
\liminf_{n\rightarrow\infty}F(\mu_{n})+\varepsilon &  =\lim_{k\rightarrow
\infty}F(\mu_{n_{k}})+\varepsilon\\
&  \geq\lim_{k\rightarrow\infty}\left[  R(\gamma_{n_{k}}\lVert\nu
)+W(\mu_{n_{k}}-\gamma_{n_{k}})\right] \\
&  \geq R(\gamma_{\infty}\lVert\nu)+W(\mu-\gamma_{\infty})\\
&  \geq\inf_{\gamma\in\mathcal{P}(S)}\left\{  R(\gamma\lVert\nu)+W_{\Gamma
}(\mu-\gamma)\right\} \\
&  =F(\mu).
\end{align*}
Since $\varepsilon>0$ is arbitrary this establishes (\ref{eqn:lsc}), and thus
$F$ is lower semicontinuous in $\mathcal{M}(S)$. The theorem is proved.
\end{proof}

\section{Properties of the $\Gamma$-divergence}

Theorem \ref{thm:main} provides an interesting characterization of the $\Gamma$-divergence. Before we
continue to specific choices of $\Gamma$, we first state some general
properties associated with $\Gamma$-divergence. Throughout this section we fix an
admissible set $\Gamma$, and thus drop the subscript from $G_{\Gamma}$ and
$W_{\Gamma}$ in this section. Also, now that we have established the
expression for $G$ as an inf-convolution as in Theorem \ref{thm:main}, we no
longer need to consider $G$ as a function on $\mathcal{M}(S)\times
\mathcal{P}(S)$, and instead can consider it just on $\mathcal{P}(S)\times
\mathcal{P}(S)$, since we want to use $G$ as a measure of how two probability distributions differ.

\begin{lemma}
\label{basic} For $(\mu,\nu)\in\mathcal{P}(S)\times\mathcal{P}(S)$ define
$G(\mu\lVert\nu)$ by Definition \ref{def:defofV} and assume $\Gamma$ is
admissible. Then the following properties hold.

1) $G(\mu\lVert\nu)\geq0$, with $G(\mu\lVert\nu)=0$ if and only if $\mu=\nu$.

2) $G(\mu\lVert\nu)$ is a convex and lower semicontinuous function of
$(\mu,\nu)$. In particular, $G(\mu\lVert\nu)$ is a convex, lower
semicontinuous function of each variable $\mu$ or $\nu$ separately.

3) $G(\mu\lVert\nu)\leq R(\mu\lVert\nu)$ and $G(\mu\lVert\nu)\leq W(\mu-\nu)$.
\end{lemma}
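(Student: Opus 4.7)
The plan is to derive all three items as quick consequences of the inf-convolution representation established in Theorem \ref{thm:main}, combined with elementary features of the defining supremum and the admissibility axioms for $\Gamma$.

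I would handle item (3) first, since it drops out immediately. Plugging the choices $\gamma=\mu$ and $\gamma=\nu$ into $G(\mu\lVert\nu)=\inf_{\gamma\in\mathcal{P}(S)}\{R(\gamma\lVert\nu)+W(\mu-\gamma)\}$ and using $R(\nu\lVert\nu)=0$ and the trivial identity $W(0)=\sup_{g\in\Gamma}0=0$, I obtain the two bounds $G(\mu\lVert\nu)\leq R(\mu\lVert\nu)$ and $G(\mu\lVert\nu)\leq W(\mu-\nu)$ respectively.

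Next I would establish item (1). Nonnegativity follows directly from Definition \ref{def:defofV}: since $\Gamma$ contains the constant $g\equiv0$, the supremum is at least $0-\log 1=0$. When $\mu=\nu$, the inf-convolution formula with the choice $\gamma=\nu$ gives $G(\nu\lVert\nu)\leq R(\nu\lVert\nu)+W(0)=0$, so $G(\nu\lVert\nu)=0$. For the converse, assume $\mu\neq\nu$. By the determining property there exists $g\in\Gamma$ with $\int g\,d\mu\neq\int g\,d\nu$; after possibly replacing $g$ by $-g$ (permitted by symmetry of $\Gamma$), I may assume $a\doteq\int g\,d(\mu-\nu)>0$. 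Convexity of $\Gamma$ with $0,\pm g\in\Gamma$ implies $tg\in\Gamma$ for every $t\in[-1,1]$. Setting $\phi(t)\doteq\int tg\,d\mu-\log\int e^{tg}\,d\nu$, boundedness of $g$ justifies differentiation under the integral sign, yielding $\phi(0)=0$ and $\phi'(0)=a>0$. Hence $\phi(t)>0$ for all sufficiently small $t>0$, and this forces $G(\mu\lVert\nu)\geq\phi(t)>0$.

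For item (2), I would work directly from the supremum definition. For each fixed $g\in\Gamma$, the functional $(\mu,\nu)\mapsto\int g\,d\mu-\log\int e^g\,d\nu$ is linear in $\mu$ and, as the composition of the convex function $-\log$ with the linear map $\nu\mapsto\int e^g\,d\nu$, convex in $\nu$; thus it is jointly convex, and taking the pointwise supremum over $g\in\Gamma$ preserves joint convexity. For lower semicontinuity, since $g$ and $e^g$ both lie in $C_b(S)$, the inside functional is jointly continuous in the product weak topology, so $G$ is lower semicontinuous as the pointwise supremum of a family of continuous functions. The corresponding statements for each variable separately then follow as special cases. The only non-routine point I anticipate is the converse direction of (1), where admissibility must be used precisely to embed the one-parameter family $\{tg:t\in[-1,1]\}$ in $\Gamma$ and to extract the sign of $\phi'(0)$; once that is in hand, everything else is bookkeeping built on Theorem \ref{thm:main}.
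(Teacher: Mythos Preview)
Your proof is correct. Items (2) and (3) match the paper's argument almost verbatim: both of you read off convexity and lower semicontinuity directly from the supremum definition, and both obtain the two upper bounds in (3) by testing $\gamma=\mu$ and $\gamma=\nu$ in the inf-convolution of Theorem~\ref{thm:main}. One small point of care in (2): saying the inner functional is ``linear in $\mu$ and convex in $\nu$'' does not by itself yield joint convexity; what actually makes it work is that the functional is \emph{separable}, $\int g\,d\mu + \bigl(-\log\int e^g\,d\nu\bigr)$, and a sum of convex functions of disjoint variables is jointly convex.

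Where you genuinely diverge from the paper is the converse direction of (1). The paper argues via the inf-convolution: since both $R(\gamma\lVert\nu)$ and $W(\mu-\gamma)$ are nonnegative, $G(\mu\lVert\nu)=0$ forces (after a tightness/lsc step that the paper leaves implicit) $R(\gamma^\ast\lVert\nu)=0$ and hence $\gamma^\ast=\nu$, whence $W(\mu-\nu)=0$; the determining property of $\Gamma$ then gives $\mu=\nu$. You instead stay with the supremum definition and produce an explicit witness: pick $g\in\Gamma$ with $\int g\,d(\mu-\nu)>0$, use convexity and symmetry of $\Gamma$ to get $tg\in\Gamma$ for $t\in[0,1]$, and differentiate $\phi(t)=t\int g\,d\mu-\log\int e^{tg}\,d\nu$ at $t=0$ to find a strictly positive value. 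Your route is a little longer but more self-contained, since it does not rely on the (unstated in the paper) attainment of the infimum; the paper's route is shorter once one accepts that the minimizer exists, and it has the conceptual advantage of isolating the role of $W(\mu-\nu)$.
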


\begin{remark}
1) The first property justifies our calling $G$ a divergence as the term is used in information theory.

2) Relative entropy has the property that for each fixed $\nu\in\mathcal{P}(S)$,
$R(\cdot\lVert\nu)$ is strictly convex on $\{\mu\in\mathcal{P}(S):R(\mu
\lVert\nu)<\infty\}$. However,
$G(\cdot\lVert\nu)$ in general is not strictly convex.
\end{remark}

\begin{proof}
[Proof of Lemma \ref{basic}]
1) As noted in Lemma \ref{lem:RE}, $R(\cdot\lVert\cdot)$ is non-negative
\cite[Lemma 1.4.1]{dupell4}, and for any $\mu\in\mathcal{P}(S)$%
\[
W(\mu)=\sup_{g\in\Gamma}\left\{  \int_{S}gd\mu\right\}  \geq\int_{S}0d\mu=0.
\]
\noindent Thus%
\[
G(\mu\lVert\nu)=\inf\{R(\mu_{1}\lVert\nu)+W(\mu_{2}):\mu_{1}+\mu_{2}=\mu
\}\geq0.
\]
Also by Lemma \ref{lem:RE}, $R(\mu_{1}\lVert\nu)=0$ if and only if
$\mu_{1}=\nu$. Thus $G(\mu\lVert\nu)=0$ if and only if%
\[
W(\mu-\nu)=\sup_{g\in\Gamma}\left\{  \int_{S}gd(\mu-\nu)\right\}  =0,
\]
which tells us $\mu=\nu$ since $\Gamma$ is admissible.

2) This is a straightforward corollary of Theorem \ref{thm:main}, since the
supremum of a collection of linear and continuous functionals is both convex
and lower semicontinuous.

3) This follows from Theorem \ref{thm:main} and that $R(\nu\lVert\nu)=W(0)=0$.
\end{proof}

\vspace{.5\baselineskip}
For relative entropy we have the following lemma \cite[Proposition 1.4.2]{dupell4}.

\begin{lemma}\label{REvar} For all $g\in C_{b}(S)$%
\[
\log\int_{S}e^{g}d\nu=\sup_{\mu\in\mathcal{P}(S)}\left\{  \int_{S}gd\mu
-R(\mu\lVert\nu)\right\}  ,
\]
where the supremum is achieved uniquely at $\mu_{0}$ satisfying%
\[
\frac{d\mu_{0}}{d\nu}(x)\doteq\frac{e^{g(x)}}{\int_{S}e^{g}d\nu}.
\]
\end{lemma}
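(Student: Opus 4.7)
The plan is to prove the variational identity and the uniqueness of the optimizer in one stroke by relating relative entropy with respect to $\nu$ to relative entropy with respect to the tilted measure $\mu_0$. The key identity will be, for any $\mu \ll \mu_0$,
\[
R(\mu\lVert\mu_0) = R(\mu\lVert\nu) - \int_S g\,d\mu + \log\int_S e^g\,d\nu.
\]

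First I would dispose of the easy inequality. Since $C_b(S)\subset M_b(S)$, the Donsker--Varadhan representation (\ref{eqn:var_forms}) applied to a fixed $g\in C_b(S)$ gives $R(\mu\lVert\nu)\geq \int g\,d\mu - \log\int e^g\,d\nu$ for every $\mu\in\mathcal{P}(S)$, which rearranges to
\[
\log\int_S e^g\,d\nu \geq \int_S g\,d\mu - R(\mu\lVert\nu),
\]
and taking the supremum over $\mu$ yields one direction.

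Next I would verify $\mu_0$ is a well-defined probability measure. Because $g\in C_b(S)$, the density $e^g/\int e^g\,d\nu$ is strictly positive, bounded, and integrates to one under $\nu$, so $\mu_0\in\mathcal{P}(S)$ and $\mu_0$ is equivalent to $\nu$. Thus any $\mu$ with $R(\mu\lVert\nu)<\infty$ (which forces $\mu\ll\nu$) also satisfies $\mu\ll\mu_0$, and the chain rule gives
\[
\log\frac{d\mu}{d\mu_0} = \log\frac{d\mu}{d\nu} - g + \log\int_S e^g\,d\nu.
\]
Integrating against $\mu$ yields the key identity displayed above, so
\[
\int_S g\,d\mu - R(\mu\lVert\nu) = \log\int_S e^g\,d\nu - R(\mu\lVert\mu_0).
\]
By Lemma \ref{lem:RE}, $R(\mu\lVert\mu_0)\geq 0$ with equality if and only if $\mu=\mu_0$; therefore the supremum over $\mu$ with $R(\mu\lVert\nu)<\infty$ equals $\log\int e^g\,d\nu$ and is attained uniquely at $\mu_0$. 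The case $R(\mu\lVert\nu)=\infty$ is vacuous since the objective is then $-\infty$. A direct substitution $\mu=\mu_0$ confirms attainment: $R(\mu_0\lVert\nu)=\int g\,d\mu_0 - \log\int e^g\,d\nu$, so $\int g\,d\mu_0 - R(\mu_0\lVert\nu) = \log\int e^g\,d\nu$.

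The only subtlety is justifying the chain-rule manipulation and confirming integrability of $\log(d\mu/d\mu_0)$ against $\mu$; both follow from the boundedness of $g$ (which controls $\log(d\mu_0/d\nu)$ uniformly) together with the assumption $R(\mu\lVert\nu)<\infty$. Everything else is algebra, and the uniqueness is a free consequence of the strict positivity of $R(\cdot\lVert\mu_0)$ off the diagonal rather than requiring a separate strict-convexity argument.
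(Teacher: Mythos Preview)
Your argument is correct and is the standard proof of this variational formula: reduce to the nonnegativity and strict positivity off the diagonal of $R(\cdot\lVert\mu_0)$ via the tilting identity $R(\mu\lVert\mu_0)=R(\mu\lVert\nu)-\int g\,d\mu+\log\int e^g\,d\nu$. The integrability justification you sketch is also fine, since boundedness of $g$ makes $\log(d\mu_0/d\nu)$ bounded and $R(\mu\lVert\nu)<\infty$ handles the remaining term.

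There is nothing to compare against in the paper itself: the paper does not prove Lemma~\ref{REvar} but simply quotes it from \cite[Proposition 1.4.2]{dupell4}. Your proof is essentially the one that appears in that reference, so in spirit it matches what the paper relies on.
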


A similar duality formula holds for the $\Gamma$-divergence when $g\in\Gamma$.

\begin{theorem}
\label{incredible} If $\Gamma$ is admissible then for $g\in\Gamma$%
\[
\log\int_{S} e^{g}d\nu=\sup_{\mu\in\mathcal{P}(S)}\left\{  \int_{S} gd\mu-G(\mu
\lVert\nu)\right\}  .
\]
\end{theorem}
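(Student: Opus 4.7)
The plan is to prove the equality by showing both inequalities, using what we already have about $G$.

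For the upper bound $\sup_{\mu\in\mathcal{P}(S)}\{\int_S g\,d\mu - G(\mu\|\nu)\} \leq \log\int_S e^g d\nu$, I would invoke the very definition of $G_\Gamma$. Since $g\in\Gamma$, for every $\mu\in\mathcal{P}(S)$ we have
\[
G(\mu\|\nu) \;\geq\; \int_S g\,d\mu - \log\int_S e^g d\nu,
\]
which rearranges immediately to $\int_S g\,d\mu - G(\mu\|\nu) \leq \log\int_S e^g d\nu$. Taking the supremum over $\mu$ gives the upper bound.

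For the lower bound I would exhibit a specific measure that attains the value. Define $\mu_0\in\mathcal{P}(S)$ by the Gibbs density
\[
\frac{d\mu_0}{d\nu}(x) \doteq \frac{e^{g(x)}}{\int_S e^g d\nu}.
\]
Since $g$ is bounded, this is a well-defined probability measure with $\mu_0 \ll \nu$. By Lemma \ref{REvar}, $\mu_0$ is the unique maximizer in the Donsker--Varadhan formula, so
\[
\int_S g\,d\mu_0 - R(\mu_0\|\nu) \;=\; \log\int_S e^g d\nu.
\]
Now Lemma \ref{basic}(3) gives $G(\mu_0\|\nu) \leq R(\mu_0\|\nu)$ (note $R(\mu_0\|\nu) < \infty$), and hence
\[
\sup_{\mu\in\mathcal{P}(S)}\left\{\int_S g\,d\mu - G(\mu\|\nu)\right\} \;\geq\; \int_S g\,d\mu_0 - G(\mu_0\|\nu) \;\geq\; \int_S g\,d\mu_0 - R(\mu_0\|\nu) \;=\; \log\int_S e^g d\nu.
\]
Combining the two bounds yields the claimed identity.

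There isn't really a hard step here; the result is essentially a direct consequence of the definition of $G_\Gamma$ together with the relative entropy duality in Lemma \ref{REvar} and the domination $G \leq R$ from Lemma \ref{basic}(3). The only subtlety worth flagging is that even though $\mu_0$ need not be a minimizer of the infimum-convolution in Theorem \ref{thm:main} (so optimality in the $G$-variational problem need not be attained at $\mu_0$ in general), it suffices for the lower bound that $\mu_0$ gives the correct value of $\log\int e^g d\nu$ through the $R$-variational problem, which $G$ only improves upon.
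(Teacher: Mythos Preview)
Your proof is correct and follows essentially the same approach as the paper: the upper bound comes directly from the definition of $G_\Gamma$ using $g\in\Gamma$, and the lower bound uses the Donsker--Varadhan identity together with $G\leq R$ from Lemma~\ref{basic}(3). The only cosmetic difference is that the paper phrases the lower bound as $\sup_{\mu\ll\nu}\{\int g\,d\mu - R(\mu\|\nu)\}\leq\sup_{\mu}\{\int g\,d\mu - G(\mu\|\nu)\}$ without naming the specific optimizer, whereas you exhibit the Gibbs measure $\mu_0$ explicitly; this is the same argument.
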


\begin{proof}
Using the definition of $G$ divergence 
\begin{align*}
\sup_{\mu\in\mathcal{P}(S)}\left\{  \int_{S}gd\mu-G(\mu\lVert\nu)\right\}   &
=\sup_{\mu\in\mathcal{P}(S)}\left\{  \int_{S}gd\mu-\sup_{f\in\Gamma}\left\{
\int_{S}fd\mu-\log\int_{S}e^{f}d\nu\right\}  \right\} \\
&  \leq\sup_{\mu\in\mathcal{P}(S)}\left\{  \int_{S}gd\mu-\left\{  \int
_{S}gd\mu-\log\int_{S}e^{g}d\nu\right\}  \right\} \\
&  =\log\int_{S}e^{g}d\nu.
\end{align*}
On the other hand, we know for relative entropy that%
\[
\log\int_{S}e^{g}d\nu=\sup_{\mu\ll\nu}\left\{  \int_{S}gd\mu-R(\mu\lVert
\nu)\right\}  .
\]
Since $G(\mu\lVert\nu)\leq R(\mu\lVert\nu)$,
\begin{align*}
\log\int_{S}e^{g}d\nu &  =\sup_{\mu\ll\nu}\left\{  \int_{S}gd\mu-R(\mu
\lVert\nu)\right\} \\
&  \leq\sup_{\mu\ll\nu}\left\{  \int_{S}gd\mu-G(\mu\lVert\nu)\right\} \\
&  \leq\sup_{\mu\in\mathcal{P}(S)}\left\{  \int_{S}gd\mu-G(\mu\lVert
\nu)\right\}.
\end{align*}
The statement of the theorem follows from the two inequalities.
\end{proof}

\vspace{.5\baselineskip}
The last theorem has two important implications.
The first is related to the fact that Lemma \ref{REvar} implies bounds for $\int_S g d\mu$ when $R(\mu\lVert\nu)$ is bounded,
and observation that has served as the basis for the analysis of various aspects of model form uncertainty \cite{chodup,dupkatpanple}. Using Theorem \ref{incredible}, we obtain analogous bounds on $\int_S g d\mu$ for $g\in\Gamma$ when  $G(\mu\lVert\nu)$ is bounded. Applications of these bounds will be further developed elsewhere.
The second is that for $g\in\Gamma$, if we take $\mu_{0}$ as defined in Lemma \ref{REvar}, then
\begin{align*}
\log\int_{S} e^{g} d\nu &  = \int_{S} g d\mu_{0} - R(\mu_{0}\lVert\nu)\\
&  \leq\int_{S} g d\mu_{0} - G(\mu_{0}\lVert\nu)\\
&  \leq\sup_{\mu\in \mathcal{P}(S)}\left\{  \int_{S} g d\mu- G(\mu\lVert\nu) \right\} \\
&  = \log\int_{S} e^{g} d\nu,
\end{align*}
where the first inequality comes from $G(\mu_{0}\lVert\nu)\leq
R(\mu_0\lVert\nu)$. Since both
inequalities above must be equalities, we must have%
\[
R(\mu_{0}\lVert\nu) = G(\mu_{0}\lVert\nu).
\]
The next lemma gives a more detailed picture of $G(\mu\lVert\nu)$ when $\mu\ll\nu$.

\begin{lemma}
\label{goodcase}
For $\mu,\nu\in \mathcal{P}(S)$, if $\mu\ll\nu$ then 
$$G(\mu\lVert\nu) = \sup_{\gamma\in\mathcal{A}(S)}\left\{\int_S \log\left(\frac{d\gamma}{d\nu}\right)d\mu\right\},$$
where 
\[
\mathcal{A}(S)\doteq\left\{\gamma\in\mathcal{P}(S): \gamma\ll\nu,\exists g \in \Gamma \mbox{ such that }\frac{d\gamma}{d\nu}(x) =e^{g(x)} \mbox{ for } x\in\mathrm{supp}(\nu) \right\}.
\]
\end{lemma}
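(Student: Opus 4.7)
The plan is to establish the two inequalities separately, viewing the right-hand side as morally the supremum of $\int_S g\,d\mu$ over those $g\in\Gamma$ that are normalized by $\int_S e^g\,d\nu=1$. For the lower bound $G(\mu\lVert\nu)\geq\sup_{\gamma\in\mathcal{A}(S)}\int_S\log(d\gamma/d\nu)\,d\mu$, I would fix $\gamma\in\mathcal{A}(S)$ together with a witness $g\in\Gamma$ satisfying $d\gamma/d\nu=e^g$ on $\mathrm{supp}(\nu)$. Since $\gamma\in\mathcal{P}(S)$ and $\gamma\ll\nu$, integrating the density yields $\int_S e^g\,d\nu=\gamma(\mathrm{supp}(\nu))=1$, so $\log\int_S e^g\,d\nu=0$. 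Using $\mu\ll\nu$ to restrict integrations to $\mathrm{supp}(\nu)$, one has $\int_S g\,d\mu=\int_S\log(d\gamma/d\nu)\,d\mu$, and plugging this $g$ into the definition of $G$ gives $\int_S\log(d\gamma/d\nu)\,d\mu\leq G(\mu\lVert\nu)$. Taking the supremum over $\gamma\in\mathcal{A}(S)$ yields the first inequality.

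For the reverse inequality, given an arbitrary $g\in\Gamma$, the natural candidate $\gamma$ with density $e^g/\int_S e^g\,d\nu$ corresponds on $\mathrm{supp}(\nu)$ to $g-c$ with $c=\log\int_S e^g\,d\nu$, but I cannot conclude $g-c\in\Gamma$ directly from the stated admissibility axioms. The key device is a convex-combination approximation using that $\Gamma$ is convex and contains all constant functions. Specifically, for $\lambda_n\in(0,1)$ with $\lambda_n\uparrow 1$, set
\[
c_n\doteq -\frac{1}{1-\lambda_n}\log\int_S e^{\lambda_n g}\,d\nu,\qquad g_n\doteq \lambda_n g+(1-\lambda_n)c_n.
\]
Then $g_n\in\Gamma$ by convexity (since the constant $c_n$ is in $\Gamma$), and $\int_S e^{g_n}\,d\nu=1$ by construction, so $\gamma_n$ with $d\gamma_n/d\nu=e^{g_n}$ lies in $\mathcal{A}(S)$. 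A direct computation gives
\[
\int_S\log(d\gamma_n/d\nu)\,d\mu=\int_S g_n\,d\mu=\lambda_n\int_S g\,d\mu-\log\int_S e^{\lambda_n g}\,d\nu,
\]
and letting $n\to\infty$, bounded convergence applied to $e^{\lambda_n g}\to e^g$ (dominated by $e^{\|g\|_\infty}$, which is finite since $g\in C_b(S)$) shows the right side tends to $\int_S g\,d\mu-\log\int_S e^g\,d\nu$. Since $g\in\Gamma$ was arbitrary, $\sup_{\gamma\in\mathcal{A}(S)}\int_S\log(d\gamma/d\nu)\,d\mu\geq G(\mu\lVert\nu)$.

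The main obstacle is precisely the normalization issue in the reverse direction. The functional $g\mapsto\int_S g\,d\mu-\log\int_S e^g\,d\nu$ is shift-invariant, so one would like to simply replace $g$ by $g-c$ to enforce $\int_S e^g\,d\nu=1$, but closure of $\Gamma$ under additive shifts is not among the admissibility axioms. The convex-combination trick $\lambda_n g+(1-\lambda_n)c_n$ absorbs the required shift into a genuine convex combination that stays in $\Gamma$ and recovers the target in the limit; everything else in the proof is essentially bookkeeping with $\mu\ll\nu$ and $\gamma\ll\nu$.
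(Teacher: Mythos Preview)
Your proof is correct. For the direction $\sup_{\gamma\in\mathcal{A}(S)}\{\cdots\}\leq G(\mu\lVert\nu)$ you argue exactly as the paper does: pick a witness $g\in\Gamma$ for a given $\gamma\in\mathcal{A}(S)$, observe $\int_S e^g\,d\nu=1$, and feed $g$ into the definition of $G$.

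For the direction $G(\mu\lVert\nu)\leq\sup_{\gamma\in\mathcal{A}(S)}\{\cdots\}$ your route differs from the paper's. The paper simply associates to each $g\in\Gamma$ the measure $\gamma_g$ with $\frac{d\gamma_g}{d\nu}=e^{g}/\int_S e^{g}\,d\nu$, computes $\int_S\log(d\gamma_g/d\nu)\,d\mu=\int_S g\,d\mu-\log\int_S e^g\,d\nu$, and treats $\gamma_g$ as an element of $\mathcal{A}(S)$. This implicitly uses that $g-c\in\Gamma$ for the constant $c=\log\int_S e^g\,d\nu$. You flag this as a gap, but in fact it does follow from admissibility: for any constant $c$ and any $n\in\mathbb{N}$, convexity together with the fact that the constant $nc$ lies in $\Gamma$ gives $(1-\tfrac{1}{n})g+\tfrac{1}{n}(nc)=(1-\tfrac{1}{n})g+c\in\Gamma$, and this converges to $g+c$ in the weak topology induced by $\mathcal{M}(S)$; since $\Gamma$ is closed in that topology, $g+c\in\Gamma$. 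So the paper's direct argument is legitimate, though it does not spell this step out.

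Your convex-combination device $g_n=\lambda_n g+(1-\lambda_n)c_n$ is nonetheless a valid and elegant alternative: it manufactures genuine members of $\mathcal{A}(S)$ without ever invoking closedness of $\Gamma$, and recovers the target value by dominated convergence. In that sense your argument is slightly more robust (it only uses convexity and that $\Gamma$ contains constants), while the paper's is shorter once one accepts that admissible $\Gamma$ are translation-invariant.
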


\begin{proof}
We use the definition $$G(\mu\lVert \nu) = \sup_{g\in\Gamma} \left\{\int_S g d\mu - \log \int_S e^g d\nu \right\}$$
to prove this lemma.
For any $g\in\Gamma$, we define $\gamma_g \in \mathcal{P}(S)$ by the relation
$$\frac{d\gamma_g}{d\nu}(x)=\frac{e^{g(x)}}{\int_S e^g d\nu}$$
for $x\in \mathrm{supp}(\nu)$, and $\gamma_g(\mathrm{supp}(\nu)^c)=0$. Then for $x\in \mathrm{supp}(\nu)$,
$$\log\left(\frac{d\gamma_g}{d\nu}(x)\right) = g(x) - \log\int_S e^g d\nu.$$
Since $\mu\ll\nu$, we have
$$\int_S \log\left(\frac{d\gamma_g}{d\nu}\right)d\mu = \int_S gd\mu - \log\int_S e^g d\nu,$$
and thus
$$G(\mu\lVert \nu) = \sup_{g\in\Gamma} \left\{\int_S g d\mu - \log \int_S e^g d\nu \right\}\leq \sup_{\gamma\in\mathcal{A}(S)}\left\{\int_S \log\left(\frac{d\gamma}{d\nu}\right)d\mu\right\}.$$

On the other hand, for any $\gamma\in \mathcal{A}(S)$, by definition, we can find a $g_\gamma\in \Gamma$ such that
$$g_\gamma(x) = \log\left(\frac{d\gamma}{d\nu}(x)\right)$$
for $x\in\mathrm{supp}(\nu)$. Then
$$\int_S g_\gamma d\mu - \log\int_S e^{g_\gamma} d\nu = \int_S\log\left(\frac{d\gamma}{d\nu}\right)d\mu.$$
Thus
$$\sup_{\gamma\in \mathcal{A}(S)}\left\{\int_S \log\left(\frac{d\gamma}{d\nu}\right)d\mu\right\}\leq \sup_{g\in\Gamma} \left\{\int_S g d\mu - \log \int_S e^g d\nu \right\}=G(\mu\lVert \nu). $$
Combining  the two inequalities completes the proof.
\end{proof}

\begin{remark} 
When $\mu\in\mathcal{A}(S)$ we always have $G(\mu\lVert\nu)=R(\mu\lVert\nu)$.
This is because if $\gamma\in \mathcal{A}(S)$ then  $\mu\ll\gamma$, and therefore
$$\int_S\log\left(\frac{d\mu}{d\nu}\right)d\mu - \int_S\log\left(\frac{d\gamma}{d\nu}\right)d\mu =\int_S\log\left(\frac{d\mu}{d\gamma}\right)d\mu = R(\mu\lVert\gamma)\geq 0. $$
Rearranging gives 
\[
\int_S \log\left(\frac{d\gamma}{d\nu}\right)d\mu=R(\mu\lVert\nu)-R(\mu\lVert\gamma), 
\]
and so 
$$G(\mu\lVert\nu)= \sup_{\gamma\in\mathcal{A}(S)}\left\{\int_S \log\left(\frac{d\gamma}{d\nu}\right)d\mu\right\}= R(\mu\lVert\nu).$$

This statement is not valid when $\mu\ll\nu$ does not hold, since then $\log(d\gamma/d\nu)$ is not defined in $\mathrm{supp}(\mu)\backslash\mathrm{supp}(\nu)$, thus 
$$\int_S \log\left(\frac{d\gamma}{d\nu}\right)d\mu$$
is not well defined.
\end{remark}

\section{Connection with optimal transport theory}\label{OT}

\label{Wass}

In the proceeding sections, we discussed general properties for the $\Gamma$-divergence 
with an admissible set $\Gamma\subset C_{b}(S)$. In this section, we
discuss specific choices of $\Gamma$ which relate the $\Gamma$-divergence with
optimal transport theory. First we state some well known results in
optimal transport theory.

\subsection{Preliminary results from optimal transport theory}

The results in this section are from \cite[Chapter 4]{racrus}. The
general Monge-Kantorovich mass transfer problem with given marginals $\mu
,\nu\in\mathcal{P}(S)$ and cost function $c:S\times S\rightarrow\mathbb{R}%
_{+}$ is%
\[
\mathcal{C}(c;\mu,\nu)\doteq\inf_{\pi\in\Pi(\mu,\nu)}\left\{  \int_{S\times
S}c(x,y)\pi(dx,dy)\right\}  ,
\]
where $\Pi(\mu,\nu)$ denotes the collection of all probability
measures on $S\times S$ with first and second marginals being $\mu$ and $\nu$, respectively.

A natural dual problem with respect to this is%
\[
\mathcal{B}(c;\rho)\doteq \sup_{f\in\mathrm{Lip}(c,S;C_{b}(S))}\left\{  \int
_{S}f(x)\rho(dx)\right\}  ,
\]
where $\rho=\mu-\nu$, $C_b(S)$ denotes the set of bounded continuous functions mapping $S$ to $\mathbb{R}$ and%

\begin{align}\label{Lip_bdd}
\mathrm{Lip}(c,S;C_{b}(S))\doteq\left\{  f\in C_{b}(S):f(x)-f(y)\leq
c(x,y)\mbox{ for all } x,y\in S\right\}  .
\end{align}
We want to know when%
\begin{equation}
\mathcal{C}(c;\mu,\nu)=\mathcal{B}(c,\rho) \label{eqn:duality}%
\end{equation}
holds. The following is a necessary and sufficient condition.
As with many results in this section,
one can extend in a trivial way to the case where costs are bounded from below, rather than non-negative.
Recall that $S$ is a Polish space.

\begin{condition}
\label{con:crep}
There is a nonempty
subset $Q\subset C_{b}(S)$ such that the cost
$c:S\times S\rightarrow[0,\infty]$  has the representation%
\begin{equation}
c(x,y)=\sup_{u\in Q}\left(  u(x)-u(y)\right)  \quad\text{for\ all}\ (x,y)\in
S\times S.\label{eqn:crep}
\end{equation}
\end{condition}

\begin{theorem}
\cite[Theorem 4.6.6]{racrus}\label{massdual} Under Condition \ref{con:crep},  
 (\ref{eqn:duality}) holds.
\end{theorem}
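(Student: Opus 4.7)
The result is the classical Kantorovich duality theorem for lower semicontinuous costs. My plan is to prove the two inequalities separately: weak duality $\mathcal{B}(c;\rho) \leq \mathcal{C}(c;\mu,\nu)$ is elementary, and the reverse inequality relies on convex duality combined with the structural properties of $c$ flowing from Condition \ref{con:crep}.

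For weak duality, fix $\pi \in \Pi(\mu,\nu)$ and $f \in \mathrm{Lip}(c,S;C_b(S))$. Using that the first and second marginals of $\pi$ are $\mu$ and $\nu$,
\[
\int_S f\, d\rho = \int_{S\times S}(f(x)-f(y))\,\pi(dx,dy) \leq \int_{S\times S} c(x,y)\,\pi(dx,dy).
\]
Taking supremum over $f$ and infimum over $\pi$ gives $\mathcal{B}(c;\rho) \leq \mathcal{C}(c;\mu,\nu)$.

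For the reverse inequality, I first extract two consequences of Condition \ref{con:crep}: (i) $c$ is lower semicontinuous as a pointwise sup of continuous functions; and (ii) $c$ satisfies the triangle inequality, since for any $u \in Q$ and $z \in S$, $u(x) - u(y) = (u(x) - u(z)) + (u(z) - u(y)) \leq c(x,z) + c(z,y)$, and taking sup in $u$ yields $c(x,y) \leq c(x,z) + c(z,y)$. With $c$ lower semicontinuous and $\Pi(\mu,\nu)$ tight, a Fenchel--Rockafellar argument on $C_b(S\times S)$ (applied to convex functionals encoding the marginal constraints and the constraint $\phi(x)+\psi(y)\leq c(x,y)$) delivers the intermediate Kantorovich dual
\[
\mathcal{C}(c;\mu,\nu) = \sup\left\{ \int \phi\, d\mu + \int \psi\, d\nu : \phi,\psi \in C_b(S),\ \phi(x)+\psi(y)\leq c(x,y) \right\}.
\]
To reduce to the Lipschitz form, I would pass to $c$-transforms: given an admissible $(\phi,\psi)$, define $f(x) \doteq \inf_{y\in S}[c(x,y) - \psi(y)]$. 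The triangle inequality gives $f(x_1) - f(x_2) \leq c(x_1,x_2)$, while $c \geq 0$ and $c(x,x) = 0$ combined with boundedness of $\psi$ yield $-\|\psi\|_\infty \leq f \leq -\psi$, so $f$ is bounded. Moreover $f \geq \phi$ (directly from $\phi + \psi \leq c$) and $-f \geq \psi$. Hence $(f,-f)$ is a Lipschitz-admissible pair with $\int f\, d\rho \geq \int \phi\, d\mu + \int \psi\, d\nu$, and taking supremum over $(\phi,\psi)$ yields $\mathcal{C}(c;\mu,\nu) \leq \mathcal{B}(c;\rho)$.

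The main technical obstacle is the Fenchel--Rockafellar step: one must carefully choose the ambient space and verify properness, lower semicontinuity, and a qualification condition at a common feasible point, for costs that may be unbounded and merely lsc. A secondary subtlety is verifying that the $c$-transform $f$ is genuinely continuous rather than merely lower semicontinuous; this can be handled using Condition \ref{con:crep} by interchanging inf and sup in $f(x) = \inf_y \sup_{u\in Q}(u(x) - u(y) - \psi(y))$ to exhibit $f$ as a supremum of continuous functions of the form $u(x) - \sup_y[u(y)+\psi(y)]$, or alternatively by approximating $c$ monotonically from below by continuous costs and passing to the limit.
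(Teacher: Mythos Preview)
The paper does not supply its own proof of this statement: it is quoted directly from \cite[Theorem~4.6.6]{racrus} and invoked as a black box. There is therefore no in-paper argument to compare your proposal against.

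Your sketch follows the standard route for Kantorovich duality under Condition~\ref{con:crep}: weak duality is immediate, and for the hard direction you first obtain the two-function dual via Fenchel--Rockafellar and then collapse to a single $c$-Lipschitz potential by $c$-transform, exploiting the triangle inequality that Condition~\ref{con:crep} forces. This is essentially the strategy used in the cited reference, and you correctly flag the two genuine technicalities. One point deserves more care than you give it: your proposed fix for continuity of the $c$-transform $f$, namely ``interchanging inf and sup'' to write $f$ as $\sup_{u\in Q}\bigl[u(x)-\sup_y(u(y)+\psi(y))\bigr]$, only yields $\sup\inf\leq\inf\sup=f$, so it exhibits a lower semicontinuous \emph{minorant} of $f$, not $f$ itself. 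Since $c$ is merely lower semicontinuous under Condition~\ref{con:crep}, the bound $|f(x)-f(y)|\leq c(x,y)\vee c(y,x)$ does not by itself give continuity. The cleaner way, which the Rachev--R\"uschendorf proof uses, is to observe that $Q\subset\mathrm{Lip}(c,S;C_b(S))$ and that it already suffices to take the dual supremum over the (continuous, bounded) functions in $Q$ themselves, bypassing the need to verify continuity of a general $c$-transform.
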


\begin{remark}
Condition \ref{con:crep} implies that $c$ satisfies the triangle inequality, i.e., for all $x,y,z\in S$
$$c(x,z) \leq c(x,y)+c(y,z).$$
This follows easily from 
\begin{align*}
\sup_{u\in Q}\left(  u(x)-u(z)\right) &= \sup_{u\in Q}\left(  (u(x)-u(y)) +(u(y)-u(z))\right)\\
&\leq \sup_{u\in Q}\left(  u(x)-u(y)\right) +\sup_{u\in Q}\left(  u(y)-u(z)\right).
\end{align*}

On the other hand, Condition \ref{con:crep} also allows for a wide range of choices of
$c(x,y)$. For example, suppose that $c$ is a continuous metric on $S$, where
continuity is with respect to the underlying metric
of $S$. Then we can choose%
\[
Q=\left\{  \min(c(x,x_{0}),n):x_{0}\in S,n\in\mathbb{N}\right\}  .
\]
It is easily verified that $Q\subset C_{b}(S)$, and that with this choice of
$Q$ \eqref{eqn:crep} holds.
\end{remark}

\subsection{$\Gamma$-divergence with the choice $\Gamma=\mathrm{Lip}(c,S;C_{b}%
(S))$}

Suppose $\Gamma=\mathrm{Lip}(c,S;C_{b}(S))$, with $c:S\times S\rightarrow
[0, \infty]$ satisfying 
 Condition \ref{con:crep}. To make the presentation simple, we have assumed that $c$ is non-negative,
 and further assume it is symmetric, meaning $c(x,y) = c(y,x) \geq 0$ for any $x,y\in S$.
To distinguish from $W_\Gamma(\mu-\nu)$ for general $\Gamma$,
we denote the transport cost for $\mu, \nu\in \mathcal{P}(S)$ by
\[
W_{c}(\mu,\nu)\doteq\sup_{g\in \mathrm{Lip}(c,S;C_{b}(S))}\left\{  \int_{S}gd(\mu-\nu)\right\} .
\]
Then by Theorem \ref{massdual}%
\[
W_{c}(\mu,\nu)=\sup_{g\in\mathrm{Lip}(c,S;C_{b}(S))}\left\{  \int_{S}gd(\mu-\nu)\right\}
=\inf_{\pi\in\Pi(\mu,\nu)}\left\{  \int_{S\times S}c(x,y)\pi(dx,dy)\right\}
.
\]

\begin{condition}\label{mea-det}
Suppose $\mathrm{Lip}(c,S;C_{b}(S))$ is measure determining, i.e., 
for all $\mu,\nu\in\mathcal{P}(S)$, $\mu \neq \nu$, there exists $f\in \mathrm{Lip}(c,S;C_{b}(S))$ such that 
$$\int_S fd\mu \neq \int_S f d\nu.$$
\end{condition}
Under Condition \ref{mea-det}, $\Gamma$ is admissible (see Definition \ref{access}), and by
Theorem \ref{thm:main}%
\begin{equation}
G_{\Gamma}(\mu\lVert\nu)=\sup_{g\in\Gamma}\left\{  \int_{S}gd\mu-\log\int
_{S}e^{g}d\nu\right\}  =\inf_{\gamma\in\mathcal{P}(S)}\left\{  W_{c}(\mu,\gamma)+R(\gamma
\lVert\nu)\right\}  . \label{Vari}%
\end{equation}
Hence by choosing $\Gamma$ properly, we get that the $\Gamma$-divergence is an
infimal convolution of relative entropy, which is a convex function of likelihood ratios, and an
optimal transport cost, which depends on a cost structure on the space $S$.
Natural questions to raise here are the following. \quad

i) Do there exist optimizers $\gamma^{\ast}$ and $g^{\ast}$ in the variational
problem (\ref{Vari})? If so, are they unique?

ii) How can one characterize $\gamma^{\ast}$ and $g^{\ast}$?

iii) For a fixed $\nu\in\mathcal{P}(S)$, what is the effect of a perturbation of
$\mu$ on $G_{\Gamma}(\mu\lVert\nu)$?

\medskip We will address these questions sequentially in this section. From now on, we will drop the subscript $\Gamma$ in this section for the simplicity of writing. We consider the case where $G(\mu\lVert\nu)<\infty$. To
impose additional constraints on $\mu$ and $\nu$ such that $G(\mu\lVert\nu)<\infty$ holds, we make a further
assumption on $c$.

\begin{condition}
\label{finite} There exist $a:S\rightarrow\mathbb{R}_{+}$ such that%
\[
c(x,y)\leq a(x)+a(y).
\]
\end{condition}

Now consider $\mu,\nu\in L^{1}(a)\doteq\{\theta\in\mathcal{P}(S):\int
_{S}a(x)\theta(dx)<\infty\}$. Then
\begin{align*}
G(\mu\lVert\nu)  &  =\inf_{\gamma\in\mathcal{P}(S)}\left\{  W_{c}(\mu,\gamma)+R(\gamma\lVert
\nu)\right\} \\
&  \leq W_{c}(\mu,\nu)\\
&  =\inf_{\pi\in\Pi(\mu,\nu)}\left\{  \int_{S\times S}c(x,y)\pi(dx,dy)\right\}
\\
&  \leq\inf_{\pi\in\Pi(\mu,\nu)}\left\{  \int_{S\times S}\left[
a(x)+a(y)\right]  \pi(dx,dy)\right\} \\
&  =\int_{S}a(x)\mu(dx)+\int_{S}a(y)\nu(dy)\\
&  <\infty.
\end{align*}
We will assume the following mild conditions on the space $S$ and cost $c$ to make $\mathrm{Lip}(c,S;C_{b}(S))$ precompact.

\begin{condition}\label{Cond:c_cont}
There exists $\left\{K_m\right\}_{m\in\mathbb{N}}$ such that $K_m\subset S$ is compact, $K_{m}\subset K_{m+1}$ for all $m\in\mathbb{N}$, and $S = \cup_{m\in\mathbb{N}}K_m$. For each $m$, there exists $\theta_m: \mathbb{R}_+ \to \mathbb{R}_+$, such that $\lim_{a\to 0} \theta_m(a) = 0$, and $\delta_m >0$, such that for any $x,y\in K_m$ satisfying $d(x,y)\leq \delta_m$,
$$c(x,y)\leq \theta_m(d(x,y)).$$
\end{condition}
Recalling the definition (\ref{Lip_bdd}),
we define the unbounded version as follows
$$\mathrm{Lip}(c,S)\doteq\left\{  f\in C(S):f(x)-f(y)\leq
c(x,y)\mbox{ for all } x,y\in S\right\},$$
where $C(S)$ is the set of continuous functions mapping $S$ to $\mathbb{R}$. Before we proceed, we state the following lemma, which will be used repeatedly in this section.

\begin{lemma}\label{beyond_bounded}
If $g\in \mathrm{Lip}(c,S)$ and $\theta,\nu\in P(S)$ satisfy $\int_S |g|d\theta <\infty$,
then
$$\int_S gd\theta - \log \int_S e^g d\nu \leq G(\theta\lVert\nu)\leq R(\theta\lVert\nu).$$
\end{lemma}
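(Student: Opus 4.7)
The plan is to reduce the claim to the bounded case already covered by the definition of $G$, by a truncation argument and dominated convergence. The second inequality $G(\theta\lVert\nu)\le R(\theta\lVert\nu)$ is immediate from Lemma \ref{basic}, part 3, so the content is in the first inequality. Before starting I would dispose of the trivial case $\int_S e^g\,d\nu=\infty$: since $\int_S|g|\,d\theta<\infty$ the left-hand side is $-\infty$, and $G(\theta\lVert\nu)\ge 0$ gives the bound. Henceforth I assume $\int_S e^g\,d\nu<\infty$.

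Next, I would define the truncations $g_n(x)\doteq(g(x)\wedge n)\vee(-n)$ and show $g_n\in\Gamma=\mathrm{Lip}(c,S;C_b(S))$. Continuity and boundedness of $g_n$ are clear because $g\in\mathrm{Lip}(c,S)\subset C(S)$ and the truncation $\phi_n(t)=(t\wedge n)\vee(-n)$ is continuous. For the Lipschitz-type condition, the symmetry of $c$ upgrades the one-sided inequality defining $\mathrm{Lip}(c,S)$ to the two-sided bound $|g(x)-g(y)|\le c(x,y)$; since $\phi_n$ is $1$-Lipschitz, $|g_n(x)-g_n(y)|\le|g(x)-g(y)|\le c(x,y)$ and in particular $g_n(x)-g_n(y)\le c(x,y)$. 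Consequently, $g_n\in\Gamma$ and the definition of $G$ yields, for every $n$,
\[
\int_S g_n\,d\theta-\log\int_S e^{g_n}\,d\nu\le G(\theta\lVert\nu).
\]

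Finally, I would pass to the limit $n\to\infty$ by two applications of dominated convergence. Since $g_n\to g$ pointwise with $|g_n|\le|g|\in L^1(\theta)$, we get $\int_S g_n\,d\theta\to\int_S g\,d\theta$. For the logarithmic term, observe that $g_n\le g^+$ pointwise, so $e^{g_n}\le e^{g^+}\le 1+e^g$, and by the case assumption $1+e^g\in L^1(\nu)$; hence $\int_S e^{g_n}\,d\nu\to\int_S e^g\,d\nu\in(0,\infty)$, and continuity of $\log$ on $(0,\infty)$ gives convergence of the logs. Combining these limits establishes the first inequality.

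The only real obstacle is producing a $\nu$-integrable majorant for $e^{g_n}$, which is why the case split on $\int_S e^g\,d\nu$ is needed; once that is done, the bound $e^{g_n}\le 1+e^g$ makes everything routine. The symmetry assumption on $c$ (stated at the start of Section \ref{Wass}) is essential for the truncation to preserve membership in $\Gamma$.
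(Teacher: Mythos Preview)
Your proof is correct and follows essentially the same approach as the paper: truncate $g$ to $g_n=(g\wedge n)\vee(-n)\in\Gamma$, then pass to the limit using dominated convergence with the majorants $|g|$ and $1+e^{g}$. If anything, you are more careful than the paper in verifying that the truncation stays in $\Gamma$.
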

\begin{proof}
We use a standard truncation argument. Since by Lemma \ref{basic} we already have $G(\theta\lVert\nu)\leq R(\theta\lVert\nu)$,
we only need to prove the first inequality in the statement of the lemma.
If $\int_S e^g d\nu = \infty$, then 
$$\int_S gd\theta - \log \int_S e^g d\nu = -\infty < 0\leq G(\theta\lVert\nu).$$
Hence we only need consider the case $\int_S e^g d\nu < \infty$. Let $g_n = \min(\max(g,-n),n)\in \mathrm{Lip}(c,S;C_b(S))=\Gamma$ for $n\in\mathbb{N}$. We have 
$|g_n(x)|\leq |g(x)|$
and 
$$\lim_{n\to\infty} g_n(x) = g(x) \quad x\in S.$$
Thus by the dominated convergence theorem
$$\lim_{n\to\infty}\int_S g_n d\theta = \int_S g d\theta.$$
Also we have 
$$e^{g_n(x)} \leq e^{g(x)}+1 \mbox{ and }
\lim_{n\to\infty} e^{g_n(x)} = e^{g(x)}.$$
Then again using the dominated convergence theorem, 
$$\lim_{n\to\infty}\int_S e^{g_n} d\nu = \int_S e^g d\nu.$$
Together with (\ref{eqn:var_forms}), this gives
\begin{align*}
\int_S gd\theta - \log\int_S e^gd\nu &= \lim_{n\to\infty} \left(\int_S g_nd\theta - \log\int_S e^{g_n} d\nu\right)\\
&\leq \sup_{f\in \Gamma}\left\{\int_S fd\theta - \log\int_S e^f d\nu\right\}\\
&=G(\theta\lVert\nu).
\end{align*}
\end{proof}

\vspace{\baselineskip}
Now we are ready to state the first main theorem of this section.

\begin{theorem}\label{optimizer} Suppose Conditions \ref{con:crep}, \ref{mea-det}, \ref{finite} and \ref{Cond:c_cont} are 
satisfied. Fix $\mu,\nu\in L^{1}(a)$.
Then the following conclusions hold.

\noindent1) There exists a unique optimizer $\gamma^{\ast}$ in the expression
(\ref{Vari}).

\noindent2) There exists an optimizer $g^{\ast}\in\mathrm{Lip}(c,S)$ in the
expression (\ref{Vari}), which is unique up to an additive constant in
$\mathrm{supp}(\mu)\cup\mathrm{supp}(\nu)$.

\noindent3) $g^{\ast}$ and $\gamma^{\ast}$ satisfy the following
conditions:

i)
\[
\frac{d\gamma^{\ast}}{d\nu}(x)=\frac{e^{g^{\ast}(x)}}{\int_{S} e^{g^{\ast}(y)}%
d\nu},\quad \nu-a.s.
\]

ii)
\[
W_{c}(\mu,\gamma^{*})=\int_{S} g^{*} d(\mu-\gamma^{*}).
\]

\end{theorem}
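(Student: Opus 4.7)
The plan is to prove the three parts in the order 1, then existence in 2, then 3, and finally uniqueness in 2, since the relations in item 3 are what pin $g^*$ down.

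For Part 1, I would take a minimizing sequence $\gamma_n$ for $F(\gamma) := R(\gamma\lVert\nu) + W_c(\mu,\gamma)$. The chain in the preceding paragraph shows $G(\mu\lVert\nu) \leq \int a\,d\mu + \int a\,d\nu < \infty$, so $R(\gamma_n\lVert\nu)$ is bounded. Tightness of level sets of $R(\cdot\lVert\nu)$ (Lemma 1.4.3(c) of \cite{dupell4}) yields a weakly convergent subsequence $\gamma_n \Rightarrow \gamma^*$, and lower semicontinuity of $R(\cdot\lVert\nu)$ and of $W_c(\mu,\cdot)$ (the latter being a supremum of continuous linear functionals on $\mathcal{M}(S)$) gives $F(\gamma^*)\leq G(\mu\lVert\nu)$. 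Uniqueness then follows from strict convexity of $R(\cdot\lVert\nu)$ on its effective domain combined with convexity of $W_c(\mu,\cdot)$.

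For existence of $g^*$, I would take a maximizing sequence $g_n \in \Gamma$ in the sup representation. Replacing $g_n$ by $g_n - g_n(x_0)$ for a fixed $x_0 \in S$ leaves the functional invariant and, by the Lipschitz condition, forces $|g_n(x)| \leq c(x,x_0) \leq a(x)+a(x_0)$ (Cond.\ \ref{finite}). Cond.\ \ref{Cond:c_cont} supplies equicontinuity on each compact $K_m$ in the exhaustion, so Arzel\`a--Ascoli together with a diagonal extraction produces a subsequence converging uniformly on compacts to some $g^* \in \mathrm{Lip}(c,S)$. Dominated convergence with envelope $a+a(x_0) \in L^1(\mu)$ gives $\int g_n\,d\mu \to \int g^*\,d\mu$, while Fatou's lemma gives $\int e^{g^*}\,d\nu \leq \liminf_n \int e^{g_n}\,d\nu$; combining,
\[
\int g^*\,d\mu - \log\int e^{g^*}\,d\nu \;\geq\; G(\mu\lVert\nu),
\]
and Lemma \ref{beyond_bounded} supplies the reverse inequality, so $g^*$ attains the sup. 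This compactness step is the hardest part of the proof: because $a$ need not be $\nu$-exponentially integrable, $e^{g_n}$ is not uniformly dominated, and the passage to the limit must rely on Fatou rather than dominated convergence, which happens to go the right way because of the sign of the logarithm term.

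For the identities in 3, I would chain
\begin{align*}
G(\mu\lVert\nu) &= \int g^* d\mu - \log\int e^{g^*} d\nu \\
&= \int g^* d(\mu-\gamma^*) + \Bigl[\int g^* d\gamma^* - \log\int e^{g^*}d\nu\Bigr] \\
&\leq W_c(\mu,\gamma^*) + R(\gamma^*\lVert\nu) \;=\; G(\mu\lVert\nu),
\end{align*}
where the first summand on the middle line is bounded using the primal form of Theorem \ref{massdual} (extended to Lipschitz potentials integrable against $\mu$ and $\gamma^*$) and the second by Lemma \ref{beyond_bounded}. Both inequalities are then forced to be equalities: equality in the first is precisely (ii), and equality in the second is the tight case of the Donsker--Varadhan bound, which by the uniqueness clause of Lemma \ref{REvar} forces (i). For uniqueness of $g^*$, identity (i) determines $g^*$ on $\mathrm{supp}(\nu)$ up to the additive constant $\log\int e^{g^*}d\nu$, and since $\gamma^* \ll \nu$ yields $\mathrm{supp}(\gamma^*) \subset \mathrm{supp}(\nu)$, the saturation of (ii) forces $g^*$ to agree on $\mathrm{supp}(\mu)$ with the $c$-transform of its restriction to $\mathrm{supp}(\gamma^*)$, pinning it down uniquely on $\mathrm{supp}(\mu)\cup\mathrm{supp}(\nu)$ modulo that constant.
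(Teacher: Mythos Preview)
Your outline is essentially the paper's own argument: Part 1 via tightness of sublevel sets of relative entropy plus lower semicontinuity and strict convexity; existence of $g^*$ via normalization at a base point, Arzel\`a--Ascoli on the compact exhaustion $\{K_m\}$ with a diagonal extraction, dominated convergence for $\int g_n\,d\mu$, and Fatou for $\int e^{g_n}\,d\nu$; and Part 3 by the add-and-subtract chain forcing both inequalities to be equalities. The uniqueness argument on $\mathrm{supp}(\mu)$ via the $c$-transform is also what the paper does, phrased there through an optimal transport plan $\pi^*\in\Pi(\mu,\gamma^*)$ on which $g^*(x)-g^*(y)=c(x,y)$ holds $\pi^*$-a.s.

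There is one genuine gap. In your chain for Part 3 you split $\int g^*\,d\mu - \log\int e^{g^*}\,d\nu$ by adding and subtracting $\int g^*\,d\gamma^*$, and you describe bounding the first summand using duality ``extended to Lipschitz potentials integrable against $\mu$ and $\gamma^*$.'' But you have not verified that $g^*$ is $\gamma^*$-integrable. The envelope $|g^*|\le a+a(x_0)$ controls $\int |g^*|\,d\mu$ because $\mu\in L^1(a)$, but nothing in the hypotheses says $\gamma^*\in L^1(a)$; you only know $\gamma^*\ll\nu$ and $R(\gamma^*\lVert\nu)<\infty$. The paper isolates this as a separate lemma: if some nonnegative $h\in\mathrm{Lip}(c,S)$ had $\int h\,d\gamma^*=\infty$, then using the truncations $\max(-h,-n)\in\Gamma$ one shows $W_c(\mu,\gamma^*)\ge -\int h\,d\mu + \int h\,d\gamma^* = \infty$, contradicting $W_c(\mu,\gamma^*)\le G(\mu\lVert\nu)<\infty$. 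Without this step the split is not justified and neither inequality in your chain is well-posed, so you should insert this integrability argument before invoking the decomposition.
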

\begin{remark}
With many analogous expressions related to relative entropy, one can only conclude the uniqueness of $\gamma^*$ and $g^*$ (up to constant addition) almost everywhere according to either the measure $\mu$ or $\nu$. Moreover, because of the regularity condition $g^*\in\mathrm{Lip}(c,S;C(S))$ and Condition \ref{Cond:c_cont}, the uniqueness of $g^*$ (up to constant addition) on $\mathrm{supp}(\mu)\cup\mathrm{supp}(\nu)$ will follow.
\end{remark}

\begin{proof}
For $n\in\mathbb{N}$ consider $\gamma_{n}\in\mathcal{P}(S)$ that satisfies%
\[
R(\gamma_{n}\lVert\nu)+W_{c}(\mu,\gamma_{n})\leq G(\mu\lVert\nu)+\frac{1}{n}.
\]
Then by \cite[Lemma 1.4.3(c)]{dupell4} $\{\gamma_{n}\}_{n\geq1}$ is precompact in the weak topology, and thus
has a convergent subsequence $\left\{\gamma_{n_{k}}\right\}_{k\geq 1}$. Denote $\gamma^{\ast}\doteq
\lim_{k\rightarrow\infty}\gamma_{n_{k}}$. Then by the lower semicontinuity of
both $R(\cdot\lVert\nu)$ and $W_{c}(\mu,\cdot)$, we have%
\[
R(\gamma^{\ast}\lVert\nu)+W_c(\mu,\gamma^{\ast})\leq\liminf_{k\rightarrow\infty
}\left(  R(\gamma_{n_{k}}\lVert\nu)+W_{c}(\mu,\gamma_{n_{k}})\right)  \leq
G(\mu\lVert\nu).
\]
Since
\[
G(\mu\lVert\nu)=\inf_{\gamma\in\mathcal{P}(S)}\left\{  R(\gamma\lVert
\nu)+W_{c}(\mu,\gamma)\right\}  \leq R(\gamma^{\ast}\lVert\nu)+W_c(\mu
,\gamma^{\ast})
\]
it follows that%
\[
G(\mu\lVert\nu)=R(\gamma^{\ast}\lVert\nu)+W_c(\mu,\gamma^{\ast}),
\]
which shows that $\gamma^{\ast}$ is an optimizer in expression (\ref{Vari}).
If there exist two optimizers $\gamma_{1}\neq\gamma_{2}$, the strict convexity
of $R(\cdot\lVert\nu)$ and convexity of $W_{c}(\mu,\cdot)$ imply that for
$\gamma_{3}=\frac{1}{2}(\gamma_{1}+\gamma_{2})$%
\begin{align*}
R(\gamma_{3}\lVert\nu)+W_{c}(\mu,\gamma_{3})  &  <\frac{1}{2}\left(  \left(
R(\gamma_{1}\lVert\nu)+W_{c}(\mu,\gamma_{1})\right)  +\left(  R(\gamma
_{2}\lVert\nu)+W_{c}(\mu,\gamma_{2})\right)  \right) \\
&  =G(\mu\lVert\nu)\leq R(\gamma_{3}\lVert\nu)+W_{c}(\mu,\gamma_{3}),
\end{align*}
a contradiction. Thus the existence and uniqueness of an optimizer
$\gamma^{\ast}$ of (\ref{Vari}) is proved, which establishes 1) in the statement of the theorem. Before proceeding, we establish the following lemma.

\begin{lemma}\label{opt_integrability}
If $g\in \mathrm{Lip}(c,S)$, then
$$\int_S g d\gamma^* <\infty.$$
\end{lemma}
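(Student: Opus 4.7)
The plan is to establish the stronger bound $\int_S |g|\, d\gamma^{\ast} < \infty$. The key observation is that although $g$ need not be bounded, the Lipschitz hypothesis combined with the symmetry of $c$ gives the two-sided inequality $|g(x) - g(y)| \leq c(x,y)$. Applying this at any fixed basepoint $y_0 \in S$ and invoking Condition \ref{finite} yields the pointwise bound $|g(x)| \leq |g(y_0)| + a(y_0) + a(x)$, and since $\mu \in L^1(a)$ by assumption, integrating gives $\int_S |g|\, d\mu < \infty$.

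The next step transfers this integrability from $\mu$ to $\gamma^{\ast}$ via a coupling with finite cost. Since $G(\mu\lVert\nu) < \infty$ is the standing assumption and $R \geq 0$, the optimality of $\gamma^{\ast}$ established earlier in the proof forces $W_{c}(\mu, \gamma^{\ast}) \leq G(\mu\lVert\nu) < \infty$. Theorem \ref{massdual} identifies $W_{c}(\mu, \gamma^{\ast})$ with the primal transport infimum $\inf_{\pi\in\Pi(\mu,\gamma^{\ast})} \int c\, d\pi$, so one can choose $\pi \in \Pi(\mu, \gamma^{\ast})$ with $\int c\, d\pi$ finite; combined with $|g(x) - g(y)| \leq c(x,y)$ this gives $\int_{S\times S} |g(x) - g(y)|\, d\pi(x,y) < \infty$.

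The conclusion then follows by integrating the elementary inequality $|g(y)| \leq |g(x)| + |g(x) - g(y)|$ against $\pi$, whose first and second marginals are $\mu$ and $\gamma^{\ast}$:
\[
\int_S |g|\, d\gamma^{\ast} \;\leq\; \int_S |g|\, d\mu + \int_{S\times S} c(x,y)\, d\pi(x,y) \;<\; \infty,
\]
which proves the stated integrability. There is no serious obstacle in this argument. The only subtleties worth flagging are that one must use Theorem \ref{massdual} to conclude that some actual coupling (not merely the dual supremum) has finite cost, and that the symmetry of $c$ is essential to upgrade the one-sided Lipschitz bound into a two-sided one, so that $|g|$ (and not just $g$) inherits the pointwise control by $a$.
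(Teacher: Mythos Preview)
Your proof is correct, and the route is genuinely different from the paper's. Both arguments begin identically: Condition~\ref{finite} together with the symmetric Lipschitz bound gives $|g(x)|\le |g(y_0)|+a(y_0)+a(x)$, whence $\int_S |g|\,d\mu<\infty$ since $\mu\in L^1(a)$; and both use that the optimality of $\gamma^\ast$ forces $W_c(\mu,\gamma^\ast)<\infty$. From there the two diverge. The paper argues by contradiction using the \emph{dual} formulation: assuming $\int_S h\,d\gamma^\ast=\infty$ for some nonnegative $h\in\mathrm{Lip}(c,S)$, it plugs the truncations $\max(-h,-n)\in\mathrm{Lip}(c,S;C_b(S))$ into the supremum defining $W_c(\mu,\gamma^\ast)$ and lets $n\to\infty$ (via monotone/dominated convergence) to produce $W_c(\mu,\gamma^\ast)=\infty$, a contradiction. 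You instead work on the \emph{primal} side: invoking Theorem~\ref{massdual} to identify $W_c(\mu,\gamma^\ast)$ with the transport infimum, you pick a coupling $\pi\in\Pi(\mu,\gamma^\ast)$ of finite cost and integrate $|g(y)|\le |g(x)|+c(x,y)$ against it. Your argument is direct and yields the explicit quantitative estimate $\int_S|g|\,d\gamma^\ast\le \int_S|g|\,d\mu+\int c\,d\pi$, at the price of appealing to Kantorovich duality (Condition~\ref{con:crep}, which is in force here); the paper's version stays entirely within the dual definition and needs no such appeal, but is less constructive.
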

\begin{proof}
This can be shown by contradiction. Assume there exists $h \in \mathrm{Lip}(c,S)$ such that $\int_S |h| d\gamma^*=\infty$. By symmetry, we can  just consider $h$ to be non-negative, since $\max (h,0)\in\mathrm{Lip}(c,S)$ and $h=\max(h,0) - \max(-h,0)$. Thus we can assume there exists non-negative $h\in \mathrm{Lip}(c,S)$ satisfying
$$\int_S h d\gamma^* = \infty,$$
and by the fact that $u\in L^1(a)$ together with Condition \ref{finite}, 
\begin{align*}
\int_S h d\mu &\leq \int_S \left[ h(0) + c(x,0)\right] \mu(dx)\\
&= h(0) + a(0) +\int_S a(x) \mu(dx)<\infty.
\end{align*}
Then
\begin{align*}
W_c(\mu,\gamma^*) & = \sup_{g \in \mathrm{Lip(c,S)}}\int_S g d(\mu - \gamma^*)\\
&\geq \limsup_{n\to\infty}\int_S \max(-h,-n) d(\mu-\gamma^*)\\
& = \limsup_{n\to\infty}\left[ \int_S \max(-h,-n) d\mu + \int_S \min(h,n)d\gamma^*\right] \\
& = \int_S -h d\mu + \int_S h d\gamma^*\\
&= \infty,
\end{align*}
where the second to last equation comes from dominated and monotone convergence theorems applied to the first and second terms respectively. However, since $\gamma^*$ is the optimizer, we have 
$$W_c(\mu,\gamma^*) \leq W_c(\mu,\gamma^*) + R(\gamma^*\lVert \nu) = G(\mu\lVert \nu) <\infty.$$
This contradiction shows the integrability of $\gamma^*$ with respect to any $\mathrm{Lip}(c,S)$ function. 
\end{proof}

\vspace{\baselineskip}
Now we consider the other variational representation of $G(\mu\lVert \nu)$, which is 
$$G(\mu\lVert \nu) = \sup_{g\in\mathrm{Lip}(c,S;C_b(S))}\left\{\int_S gd\mu-\log\int_S e^{g}d\nu\right\}.$$
Take $g_n\in\mathrm{Lip}(c,S;C_b(S))$ such that 
$$ G(\mu\lVert\nu) - 1/n \leq \int_S g_n d\mu-\log\int_S e^{g_n}d\nu \leq G(\mu\lVert \nu) .$$
Without loss of generality, we can  assume $g_n(x_0)=0$ for some fixed $x_0\in K_0\subset S$.  Since for any $m\in\mathbb{N}$ $K_m\subset S$  is compact, we have that $\left\{g_n\right\}_{n\in\mathbb{N}}$ is bounded and equicontinuous on $K_m$ by Condition \ref{Cond:c_cont}. By the  Arzel\`{a}-Ascoli theorem, there exists a subsequence of $\left\{g_n\right\}_{n\in\mathbb{N}}$ that converges uniformly in $K_m$. Using diagonal argument, by taking subsequences sequentially along $\left\{K_m\right\}_{m\in\mathbb{N}}$, where the next subsequence is a subsequence of the former one, and take one element from each sequence, we conclude there exists a subsequence $\left\{g_{n_j}\right\}_{j\in\mathbb{N}}$, that converges uniformly in any $K_m$. Since $S=\cup_{m\in\mathbb{N}}K_m$, we conclude that $\left\{g_{n_j}\right\}_{j\in\mathbb{N}}$ converges pointwise in $S$. Denotes its limit by $g^*$. It can be easily verified that $g^*\in \mathrm{Lip}(c,S)$.
 
Since $g_{n_j}(x)\leq g_{n_j}(x_0) + c(x_0,x)\leq a(x_0)+a(x)$ and $\int_S\left( a(x_0)+a(x)\right)d\mu <\infty$, by the dominated convergence theorem 
$$\lim_{j\to\infty} \int_S g_{n_j} d\mu = \int_S g^* d\mu.$$
By Fatou's lemma, we have 
$$\liminf_{j\to\infty} \int_S e^{g_{n_j}}d\nu \geq \int e^{g^*}d\nu,$$
and therefore 
$$-\log\int e^{g^*}d\nu\geq\limsup_{j\to\infty} -\int_S e^{g_{n_j}}d\nu.$$

Putting these together, we have 
\begin{align*}
G(\mu\lVert \nu) &= \sup_{g\in\mathrm{Lip}(c,S;C_b(S))}\left\{\int_S gd\mu-\log\int_S e^{g}d\nu\right\}\\
&\leq \limsup_{j\to\infty} \left\{\int_S g_{n_j}d\mu-\log\int_S e^{g_{n_j}}d\nu\right\}\\
&\leq \int_S g^* d\mu - \log\int_S e^{g^*} d\nu\\
&= \left(\int_S g^* d\mu - \int_S g^* d\gamma^* \right)+ \left(\int_S g^* d\gamma^* - \log\int_S e^{g^*} d\nu\right).
\end{align*}
We can add and subtract $\int_S g^*d\gamma^*$ because we have proved in Lemma \ref{opt_integrability} that $\gamma^*$ is integrable with respect to functions in $\mathrm{Lip}(c,S)$, and $g^*\in \mathrm{Lip}(c,S)$. 
By Lemma \ref{beyond_bounded} we have
$$\int_S g^* d\gamma^* - \log\int_S e^{g^*} d\nu \leq R(\gamma^*\lVert\nu). $$
We also have 
$$\int_S g^* d\mu - \int_S g^* d\gamma^* \leq  W_c(\mu,\gamma^*),$$
which is due to 
\begin{align*}
W_c(\mu,\gamma^*)&= \sup_{g\in\mathrm{Lip}(c,S;C_b(S))}\int_S gd(\mu-\gamma^*)\\
&\geq \limsup_{n\to\infty}\int_S\max(\min(g^*,n),-n)d(\mu-\gamma^*)\\
&= \int_S g^*d(\mu-\gamma^*),
\end{align*}
where the last equality is because of the dominated convergence theorem and integrability of $|g^*|$ with respect to $\mu$ and $\gamma^*$ (Lemma \ref{opt_integrability}).
We can therefore continue the calculation above as 
\begin{align*}
 &\left(\int_S g^* d\mu - \int_S g^* d\gamma^* \right)+ \left(\int_S g^* d\gamma^* - \log\int_S e^{g^*} d\nu\right)\\
&\qquad\leq W_c(\mu,\gamma^*) + R(\gamma^*\lVert\nu)\\
&\qquad= G(\mu\lVert\nu).
\end{align*}

Since both the upper and lower bounds on the inequalities coincide, we must have all inequalities to be equalities, and therefore
$$G(\mu\lVert \nu) = \int_S g^*d\mu - \log\int_S e^{g^*} d\nu,$$
$$\int_S g^* d\mu - \int_S g^* d\gamma^* =  W_c(\mu,\gamma^*),$$
and 
$$\int_S g^* d\gamma^* - \log\int_S e^{g^*} d\nu = R(\gamma^*\lVert\nu). $$
The last equation gives us the relationship
$$\frac{d\gamma^*}{d\nu}(x) = \frac{e^{g^*(x)}}{\int_S e^{g^*}d\nu} \quad \nu-a.s.$$

 Thus we have shown the existence of optimizer $g^*\in\mathrm{Lip}(c,S)$ and its relationship with $\gamma^*$. Lastly, for any other optimizer $\bar{g}\in\mathrm{Lip}(c,S)$ the analogous argument shows
%
$$\frac{d\gamma^*}{d\nu}(x) = \frac{e^{\bar{g}(x)}}{\int_S e^{\bar{g}}d\nu} \quad \nu-a.s.$$
Hence  uniqueness of the optimizer $g^*$ in $\mathrm{supp}(\nu)$ up to $\nu-a.s.$ is also proved. 

To determine the uniqueness of the optimizer $g^*$ in $\mathrm{supp}(\mu)$, we take an optimal transport plan between $\mu$ and $\gamma^*$, $\pi^{*}\in
\Pi(\mu,\gamma^{*})$ for $W_{c}(\mu,\gamma^{*})$, which means
$$W_c(\mu,\gamma^*) = \inf_{\pi\in\Pi(\mu,\gamma^*)}\left\{\int_{S\times
S}c(x,y)\pi(dx,dy)\right\}=\int_{S\times
S}c(x,y)\pi^*(dx,dy).$$
(Note that $c$ satisfying Condition \ref{con:crep} is lower semicontinuous, and therefore   \cite[Theorem 1.5]{ambgig} shows the existence of an optimal transport plan $\pi^*$.)

Since $g^*(x)-g^*(y)\leq c(x,y)$, 
\begin{align*}
W_c(\mu,\gamma^*) &= \int_{S\times
S}c(x,y)\pi^*(dx,dy)\\
 &\geq \int_{S\times
 S}\left[ g^*(x)-g^*(y)\right] \pi^*(dx,dy)\\
&= \int_S g^*(x) (\mu-\gamma^*)(dx)\\
&= W_c(\mu,\gamma^*).
\end{align*}
Then the only inequality above must be equality, which implies that for $(x,y)\in\mathrm{supp}(\gamma^*)$, $g^*(x)-g^*(y)=c(x,y)$, $\pi^*- a.s.$ This is also true for any other optimizer $\bar{g}\in \mathrm{Lip}(c,S)$ for (\ref{Vari}). Thus we are able to determine $g^*$ uniquely in $\mathrm{supp}(\mu)$ $\mu-a.s.$ with the help of $\pi^*$ and data of $g^*$ in $\mathrm{supp}(\nu)$. Lastly, since $g^*\in\mathrm{Lip}(c,S)$ and by Condition \ref{Cond:c_cont}, we conclude the uniqueness of $g^*$ in $\mathrm{supp}(\mu)\cup\mathrm{supp}(\nu)$ by the continuity of $g^*$.
\end{proof}

\begin{remark}
When $\mu\ll\nu$ Theorem \ref{optimizer} implies that for some constant $c_0$%
\[
g^{\ast}(x)=\log\left(  \frac{d\gamma^{\ast}}{d\nu}(x)\right)  -c_{0}%
\quad \nu-a.s.
\]
Hence 
\[
G(\mu\lVert\nu)=\int_{S}g^{\ast}d\mu-\log\int_{S}e^{g^{\ast}}d\nu=\int_{S}
\log\left(  \frac{d\gamma^{\ast}}{d\nu}(x)\right)  d\mu,
\]
and so the $\Gamma$-divergence of $\mu$ with respect to $\nu$ looks like a \textquotedblleft modified\textquotedblright\ version of relative entropy.
\end{remark}

The next theorem tells us that 3) of Theorem \ref{optimizer} is not only a
description of of the pair of optimizer $(g^{*},\gamma^{*})$, but also a
characterization of it.

\begin{theorem}
\label{verif} 
Suppose Conditions \ref{con:crep}, \ref{mea-det}, \ref{finite} and \ref{Cond:c_cont} are 
satisfied. Fix $\mu,\nu\in L^{1}(a)$. If $g_{1}\in$\emph{ Lip}$(c,S)$ and
$\gamma_{1}\in\mathcal{P}(S)$ satisfy condition 3) in Theorem
\ref{optimizer}, then $(g_{1},\gamma_{1})$ are optimizers in the corresponding
variational problem (\ref{Vari}).
\end{theorem}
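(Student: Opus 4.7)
The plan is to show the double inequality $R(\gamma_1\lVert\nu)+W_c(\mu,\gamma_1)\le G(\mu\lVert\nu)\le R(\gamma_1\lVert\nu)+W_c(\mu,\gamma_1)$. The second inequality is free because $G(\mu\lVert\nu)$ is the infimum in (\ref{Vari}) and $\gamma_1\in\mathcal{P}(S)$ is a candidate. So the work is all in producing the first inequality from the two conditions in 3).

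The first step is to exploit condition (i) to compute the relative entropy term in closed form. Since (i) gives $\gamma_1\ll\nu$ with an explicit Radon--Nikodym derivative, a direct computation yields
\[
R(\gamma_1\lVert\nu)=\int_S\log\frac{d\gamma_1}{d\nu}\,d\gamma_1=\int_S g_1\,d\gamma_1-\log\int_S e^{g_1}d\nu.
\]
Adding condition (ii) to this identity produces the key algebraic telescoping
\[
R(\gamma_1\lVert\nu)+W_c(\mu,\gamma_1)=\int_S g_1\,d\mu-\log\int_S e^{g_1}d\nu.
\]
Here I need the integrals $\int g_1\,d\gamma_1$ and $\int g_1\,d\mu$ to be finite, which follows from $g_1\in\mathrm{Lip}(c,S)$ combined with Condition \ref{finite}: fixing any basepoint $x_0$, $|g_1(x)|\le|g_1(x_0)|+c(x_0,x)\le|g_1(x_0)|+a(x_0)+a(x)$, so integrability against $\mu\in L^1(a)$ is immediate, and integrability against $\gamma_1$ follows from Lemma \ref{opt_integrability} once we note (from (ii)) that $W_c(\mu,\gamma_1)<\infty$ and $\gamma_1$ satisfies the same argument used there (or directly from the density formula (i) once we check $R(\gamma_1\lVert\nu)<\infty$).

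The second step is to bound the resulting quantity above by $G(\mu\lVert\nu)$. This is exactly the content of Lemma \ref{beyond_bounded} applied to $g_1\in\mathrm{Lip}(c,S)$ and $\theta=\mu$: it gives
\[
\int_S g_1\,d\mu-\log\int_S e^{g_1}d\nu\le G(\mu\lVert\nu).
\]
Combining with the previous display shows $R(\gamma_1\lVert\nu)+W_c(\mu,\gamma_1)\le G(\mu\lVert\nu)$, and together with the infimum bound in the opposite direction we conclude equality. Hence $\gamma_1$ achieves the infimum in (\ref{Vari}), and because the chain of inequalities collapses we also see $\int_S g_1\,d\mu-\log\int_S e^{g_1}d\nu=G(\mu\lVert\nu)$, so $g_1$ achieves the supremum in the $\Gamma$-divergence variational formula (in its extension to $\mathrm{Lip}(c,S)$ via Lemma \ref{beyond_bounded}).

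The main obstacle is purely a bookkeeping issue about integrability: ensuring $\int|g_1|\,d\mu<\infty$ and $\int|g_1|\,d\gamma_1<\infty$ so that the identities above actually make sense and Lemma \ref{beyond_bounded} applies. Once those are in place, the proof is essentially one line of algebra sandwiched between the two sides of the inf-convolution definition.
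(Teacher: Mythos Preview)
Your proof is correct and follows essentially the same route as the paper: use conditions (i) and (ii) to obtain $R(\gamma_1\lVert\nu)+W_c(\mu,\gamma_1)=\int_S g_1\,d\mu-\log\int_S e^{g_1}d\nu$, then sandwich this between the two variational expressions for $G(\mu\lVert\nu)$ via Lemma \ref{beyond_bounded}, with the integrability $\int_S|g_1|\,d\mu<\infty$ checked exactly as you do via the Lipschitz bound and Condition \ref{finite}. One small caveat: your appeal to Lemma \ref{opt_integrability} for $\gamma_1$ is not quite clean, since that lemma is stated for the optimizer $\gamma^*$ and you are in the process of proving $\gamma_1$ is an optimizer; but your parenthetical alternative (reading finiteness off the density formula (i)) is fine, and in any case the paper simply takes the identities in condition 3) as implicitly carrying the needed finiteness.
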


\begin{proof}
The theorem follows from the two variational characterization of $\Gamma$-divergence
in (\ref{Vari}).
Condition 3) of Theorem \ref{optimizer} implies
\[
R(\gamma_{1}\lVert\nu)=\int_{S}g_{1}d\gamma_{1}-\log\int_{S}e^{g_{1}}d\nu
\mbox{ and }
W_{c}(\mu,\gamma_{1})=\int_{S}g_{1}d(\mu-\gamma_{1}),
\]
and therefore 
\[
R(\gamma_{1}\lVert\nu)+W_{c}(\mu,\gamma_{1})=\int_{S}g_{1}d\mu-\log\int
_{S}e^{g_{1}}d\nu.
\]
This implies
\begin{align*}
G(\mu\lVert\nu)  &  =\inf_{\gamma\in\mathcal{P}(S)}\left\{  R(\gamma\lVert
\nu)+W_{c}(\mu,\gamma)\right\} \\
&  \leq R(\gamma_{1}\lVert\nu)+W_{c}(\mu,\gamma_{1})\\
&  =\int_{S}g_{1}d\mu-\log\int_{S}e^{g_{1}}d\nu\\
&  \leq\sup_{g\in\Gamma}\left\{  \int_{S}gd\mu-\log\int_{S}e^{g}d\nu\right\}
\\
&  =G(\mu\lVert\nu).
\end{align*}
The first inequality comes from the fact that $\gamma_1\in \mathcal{P}(S)$, while the second needs a little more discussion, which will be given below. Assuming this, the last display shows that 
$(g_{1},\gamma_{1})$ are optimizers. The second inequality follows from Lemma \ref{beyond_bounded} and the fact that 

\begin{align*}
\int_S |g_1(x)|\mu(dx) &\leq \int_S |g_1(0)|+c(0,x)\mu(dx)\\
&\leq \int_S |g_1(0)|+a(0)+a(x)\mu(dx)<\infty.
\end{align*}
The proof is complete.
\end{proof}

\vspace{\baselineskip}
The last theorem answers questions i) and ii) raised earlier in this section, now we want to answer iii), which is to characterize the directional derivative of $G(\mu\lVert\nu)$ in the first variable when fixing the second one, i.e., 
$$\lim_{\varepsilon\to 0^+} \frac{1}{\varepsilon}\left(G(\mu+\varepsilon \rho\lVert \nu)-G(\mu\lVert\nu)\right)$$
for $\rho\in\mathcal{M}_0(S)$ which satisfies certain conditions. From Theorem \ref{optimizer} and remarks following it  we know that any optimizer $g^*$ of expression (\ref{Vari}) is unique in $\mathrm{supp}(\mu)\cup \mathrm{supp}(\nu)$. However, there is still freedom to choose $g^*$ in $S\backslash\left\{\mathrm{supp}(\mu)\cup \mathrm{supp}(\nu)\right\}$, since the variational problem in (\ref{Vari}) does not take into account of the information of $g^*$ outside $\mathrm{supp}(\mu)\cup \mathrm{supp}(\nu)$, other than requiring that $g^*$ belong to $\mathrm{Lip}(c,S)$. We will define a special $g^*$ that is uniquely defined not only in $\mathrm{supp}(\mu)$ and $\mathrm{supp}(\nu)$, but also on $S\backslash\left\{\mathrm{supp}(\mu)\cup \mathrm{supp}(\nu)\right\}$. For $x\in S\backslash\left\{\mathrm{supp}(\mu)\cup \mathrm{supp}(\nu)\right\}$, set
\begin{align}\label{the_opt}
g^*(x)\doteq\inf_{y\in\mathrm{supp}(\nu)}\left\{g^*(y)+c(x,y)\right\}.
\end{align}
From now on we will use the notation $g^*$ for the function defined in \eqref{the_opt}. The following lemma 
confirms that this construction of $g^*$ still lies in $\mathrm{Lip}(c,S)$.

\begin{lemma}
The following two statements hold.

1) For $x\in \mathrm{supp}(\mu)$, the expression $(\ref{the_opt})$ also holds. In other words, for $x\in S\backslash\mathrm{supp}(\nu)$, we have 

$$g^*(x) = \inf_{y\in\mathrm{supp}(\nu)}\left\{g^*(y)+c(x,y)\right\}.$$

2) $g^*$ defined by equation (\ref{the_opt}) is in $\mathrm{Lip}(c,S)$. In addition, 
\begin{align}\label{biggest}
g^*(x) = \sup\{h(x): h\in \mathrm{Lip}(c,S), h(y) = g^*(y)\ \mathrm{for}\ y\in\mathrm{supp}(\nu) \}
\end{align}
\end{lemma}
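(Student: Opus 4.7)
My approach is to introduce the auxiliary function
\[
h(x) \doteq \inf_{y \in \mathrm{supp}(\nu)}\bigl\{g^*(y) + c(x,y)\bigr\},
\]
and show that $g^*$, as extended by (\ref{the_opt}), coincides with $h$ on all of $S$. Part 1) then reduces to the identity $g^* = h$ on $\mathrm{supp}(\mu)$. The bound $g^* \leq h$ on $\mathrm{supp}(\mu) \cup \mathrm{supp}(\nu)$ is immediate from $g^* \in \mathrm{Lip}(c,S)$. For the reverse inequality on $\mathrm{supp}(\mu)$, I will exploit the rigidity already established in the proof of Theorem \ref{optimizer}: there exists an optimal coupling $\pi^* \in \Pi(\mu,\gamma^*)$ for which $g^*(x) - g^*(y) = c(x,y)$ holds $\pi^*$-a.s.\ Since $\gamma^* \ll \nu$ forces $\mathrm{supp}(\gamma^*) \subset \mathrm{supp}(\nu)$, disintegrating $\pi^*$ along its first marginal produces, for $\mu$-a.e.\ $x$, some $y \in \mathrm{supp}(\nu)$ with $g^*(x) = g^*(y) + c(x,y) \geq h(x)$; combined with the upper bound this gives $g^* = h$ $\mu$-a.s.

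To upgrade this to equality on all of $\mathrm{supp}(\mu)$, I will check that both sides are continuous. $g^*$ lies in $C(S)$ by the very definition of $\mathrm{Lip}(c,S)$. For $h$, I will verify $h \in \mathrm{Lip}(c,S)$ directly: for $x, x' \in S$ and $\varepsilon > 0$, pick $y_\varepsilon \in \mathrm{supp}(\nu)$ near-optimal for $h(x')$ and apply the triangle inequality for $c$ noted in the remark following Theorem \ref{massdual} to obtain
\[
h(x) \leq g^*(y_\varepsilon) + c(x,y_\varepsilon) \leq g^*(y_\varepsilon) + c(x,x') + c(x',y_\varepsilon) \leq h(x') + c(x,x') + \varepsilon;
\]
sending $\varepsilon \to 0$ yields the Lipschitz bound, and continuity of $h$ follows from $c(x',x) \to 0$ as $x' \to x$, which uses the local modulus from Condition \ref{Cond:c_cont}. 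Since two continuous functions that agree on a $\mu$-full-measure set agree on $\mathrm{supp}(\mu)$, part 1) is complete.

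For part 2), the same argument (using $c(x,x)=0$, which follows from Condition \ref{con:crep}) gives $g^* = h$ on $\mathrm{supp}(\nu)$, and on $S \setminus (\mathrm{supp}(\mu) \cup \mathrm{supp}(\nu))$ the identity holds by the definition (\ref{the_opt}); combined with part 1), the extended $g^*$ equals $h$ on all of $S$, so $g^* \in \mathrm{Lip}(c,S)$ by the Lipschitz calculation above. For the maximality formula (\ref{biggest}), any competitor $h' \in \mathrm{Lip}(c,S)$ that agrees with $g^*$ on $\mathrm{supp}(\nu)$ satisfies $h'(x) \leq h'(y) + c(x,y) = g^*(y) + c(x,y)$ for every $y \in \mathrm{supp}(\nu)$, so taking the infimum produces $h'(x) \leq h(x) = g^*(x)$.

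The principal obstacle is the promotion from $\mu$-a.s.\ equality to pointwise equality on $\mathrm{supp}(\mu)$ in part 1). The lower semicontinuity of $c$ afforded by Condition \ref{con:crep} alone is not enough; one genuinely needs the locally uniform control $c(x,y) \leq \theta_m(d(x,y))$ from Condition \ref{Cond:c_cont} to conclude $h$ is continuous, without which the a.s.\ identity cannot be extended to all of $\mathrm{supp}(\mu)$.
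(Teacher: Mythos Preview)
Your proof is correct and complete, but it takes a somewhat different route from the paper's argument. The paper obtains Part~1) directly for \emph{every} $x\in\mathrm{supp}(\mu)$ by invoking an external fact (\cite[Remark~1.15]{ambgig}): for any optimal plan $\pi^*$, the equality $g^*(x)-g^*(y)=c(x,y)$ holds for \emph{all} $(x,y)\in\mathrm{supp}(\pi^*)$, not merely $\pi^*$-a.s.; since every $x\in\mathrm{supp}(\mu)$ lies in the first projection of $\mathrm{supp}(\pi^*)$, this immediately yields a witness $y_x\in\mathrm{supp}(\nu)$ with $g^*(x)=g^*(y_x)+c(x,y_x)$, and no continuity upgrade is needed. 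You instead start from the $\pi^*$-a.s.\ equality already recorded in the proof of Theorem~\ref{optimizer}, disintegrate to obtain $g^*=h$ $\mu$-a.e., and then promote to all of $\mathrm{supp}(\mu)$ by showing both sides are continuous (using Condition~\ref{Cond:c_cont} for $h$). This makes your argument entirely self-contained within the paper, at the modest cost of verifying continuity of $h$; the paper's route is shorter but leans on an outside reference.

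For Part~2), the paper checks the Lipschitz inequality for the extended $g^*$ case by case (one point in $\mathrm{supp}(\nu)$, both points outside, etc.). Your approach is cleaner: you establish once that the auxiliary function $h(x)=\inf_{y\in\mathrm{supp}(\nu)}\{g^*(y)+c(x,y)\}$ lies in $\mathrm{Lip}(c,S)$ via the triangle inequality for $c$, then observe that $g^*=h$ on all of $S$ (on $\mathrm{supp}(\nu)$ by $c(x,x)=0$, on $\mathrm{supp}(\mu)$ by Part~1), elsewhere by definition), so $g^*\in\mathrm{Lip}(c,S)$ follows immediately. The maximality formula~(\ref{biggest}) is handled identically in both.
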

\begin{proof}

1) For $x\in\mathrm{supp}(\mu)$, from an optimal transport plan between $\mu$ and $\gamma^*$, $\pi^*\in\Pi(\mu,\gamma^*)$ for $W_c(\mu,\gamma^*)$, we know there exists $y_x\in \mathrm{supp}(\nu)$ such that $(x,y_x)\in\mathrm{supp}(\pi^*)$. Thus by \cite{ambgig}[Remark 1.15],
$$g^*(x)= g^*(y_x) +c(x,y_x).$$
On the other hand, by Theorem \ref{optimizer}, $g^*|_{\mathrm{supp}(\nu)\cup\mathrm{supp}(\mu)}\in\mathrm{Lip}(c,S)$. Thus, for other $y\in\mathrm{supp}(\nu)$, $g^*(x)\leq c(x,y)+g^*(y)$, which in turn gives

$$g^*(x) \leq \inf_{y\in\mathrm{supp}(\nu)}\left\{g^*(y)+c(x,y)\right\}.$$
By combining the two expressions above, we have for $x\in\mathrm{supp}(\mu)$, (\ref{the_opt}) also holds. In other words, $g^*$ is totally characterized by $g^*|_\mathrm{supp}(\nu)$ and (\ref{the_opt}).

2) Since $c\geq 0$, it is easily checked that for any $x\not\in \mathrm{supp}(\nu)$ and any $y\in\mathrm{supp}(\nu)$,
$$g^*(y)\leq g^*(x) \leq g^*(y)+c(x,y).$$
For $x\in\mathrm{supp}(\nu)$, since we already know $g^*|_{\mathrm{supp}(\nu)}$ is uniquely determined and the optimizer constructed in Theorem \ref{optimizer} is in $\mathrm{Lip}(c,S)$, we  conclude that for any $y\in\mathrm{supp}(\nu)$, 
$$g^*(y)-c(x,y)\leq g^*(x) \leq g^*(y)+c(x,y).$$
Hence to show $g^*\in\mathrm{Lip}(c,S)$ we only need to check for $x_1,x_2\not\in \mathrm{supp}(\nu)$ the Lipschitz constrait is satisfied. 
From  the definition (\ref{the_opt}),
we know for any $n<\infty$
there exists $y_1\in \mathrm{supp}(\nu)$ such that 
$$c(x_1,y_1) - 1/n \leq g^*(x_1)-g^*(y_1).$$
Also, because $y_1\in\mathrm{supp}(\nu)$,
$$g^*(x_2)-g^*(y_1) \leq c(x_2,y_1).$$
Therefore 
\begin{align*}
g^*(x_2)-g^*(x_1)&\leq (c(x_2,y_1)-c(x_1,y_1))+1/n\\
&\leq c(x_1,x_2)+1/n,
\end{align*}
where the last inequality uses the triangle inequality property of $c$. Since $n>0$ is arbitrary and we can swap the roles of $x_1$ and $x_2$, we have proved the Lipschitz condition of $g^*$ for $x_1,x_2\not\in\mathrm{supp}(\nu)$. Thus the statement that $g^*\in \mathrm{Lip}(c,S)$ is proven.

For (\ref{biggest}), notice that for $h\in\mathrm{Lip}(c,S)$,  $x\in S$ and $y\in\mathrm{supp}(\nu)$,
$$h(x) \leq h(y)+ c(x,y).$$
So if $h(y) = g^*(y)$ for $y\in\mathrm{supp}(\nu)$, then for $x\in S\backslash\mathrm{supp}(\nu)$,
$$h(x) \leq \inf_{y\in\mathrm{supp}(\nu)}\left\{h(y)+c(x,y)\right\} = \inf_{y\in\mathrm{supp}(\nu)}\left\{g^*(y)+c(x,y)\right\}=g^*(x).$$
Since $g^*$ is also in $\mathrm{Lip}(c,S)$, this proves (\ref{biggest}).
\end{proof}

\vspace{\baselineskip}
Then based on this construction, we have the following result.  

\begin{theorem}\label{first_variation}
Take $\Gamma=\mathrm{Lip}(c,S;C_b(S))$ where $c$ satisfies the conditions of Theorem \ref{optimizer} and $\mu,\nu\in L^1(a)$. Take $\rho=\rho_+-\rho_-\in\mathcal{M}_0(S)$ where $\rho_+,\rho_-\in\mathcal{P}(S)$ are mutually singular probability measures, $\rho_+\in L^1(a)$, and assume there exists $\varepsilon_0>0$ such that $\mu+\varepsilon\rho\in\mathcal{P}(S)$ for $0<\varepsilon\leq\varepsilon_0$. Then
$$\lim_{\varepsilon\to 0^+} \frac{1}{\varepsilon}\left(G(\mu+\varepsilon \rho\lVert \nu)-G(\mu\lVert\nu)\right)=\int_S g^* d\rho.$$
where $g^*$ is the optimizer found in (\ref{the_opt}). 
\end{theorem}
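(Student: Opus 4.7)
The plan is to establish matching lower and upper bounds on $\tfrac{1}{\varepsilon}\bigl(G(\mu+\varepsilon\rho\lVert\nu) - G(\mu\lVert\nu)\bigr)$, exploiting the optimality characterization of Theorem \ref{optimizer} and convexity of $G(\cdot\lVert\nu)$. For the lower bound I would test against $g^*$ itself. Since $|g^*(x)| \leq |g^*(x_0)| + a(x_0) + a(x)$, and since $\mu+\varepsilon_0\rho \geq 0$ gives $\rho_- \leq \mu/\varepsilon_0$ and hence $\rho_-\in L^1(a)$, the measure $\mu+\varepsilon\rho$ integrates $|g^*|$, so Lemma \ref{beyond_bounded} applies and yields
\[ G(\mu+\varepsilon\rho\lVert\nu) \geq \int_S g^* d(\mu+\varepsilon\rho) - \log\int_S e^{g^*}d\nu = G(\mu\lVert\nu) + \varepsilon\int_S g^* d\rho, \]
the equality using that $g^*$ attains the optimum at $\mu$ by Theorem \ref{optimizer}. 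Dividing by $\varepsilon$ produces the desired bound on $\liminf_{\varepsilon\to 0^+}\tfrac{1}{\varepsilon}\bigl(G(\mu+\varepsilon\rho\lVert\nu)-G(\mu\lVert\nu)\bigr)$.

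For the matching upper bound let $g^*_\varepsilon\in\mathrm{Lip}(c,S)$ denote the canonically extended optimizer (via \eqref{the_opt}) of $G(\mu+\varepsilon\rho\lVert\nu)$. Applying Lemma \ref{beyond_bounded} to $g^*_\varepsilon$ at $\mu$ and subtracting from the variational identity at $\mu+\varepsilon\rho$ gives $G(\mu+\varepsilon\rho\lVert\nu) - G(\mu\lVert\nu) \leq \varepsilon\int_S g^*_\varepsilon d\rho$, so it suffices to prove $\limsup_{\varepsilon\to 0^+}\int g^*_\varepsilon d\rho \leq \int g^* d\rho$. Along any subsequence $\varepsilon_n\to 0$, I would normalize $g^*_{\varepsilon_n}(x_0)=0$ for a fixed $x_0$ (harmless because the variational expressions for $G$ are invariant under constant shifts and $\rho(S)=0$), and then apply Arzel\`a--Ascoli on each $K_m$ via Condition \ref{Cond:c_cont} together with the uniform envelope $|g^*_{\varepsilon_n}(x)|\leq c(x,x_0)\leq a(x_0)+a(x)$, plus the diagonal argument used in Theorem \ref{optimizer}, to extract a pointwise limit $\tilde g\in\mathrm{Lip}(c,S)$. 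Convexity $G(\mu+\varepsilon\rho\lVert\nu)\leq(1-\varepsilon/\varepsilon_0)G(\mu\lVert\nu)+(\varepsilon/\varepsilon_0)G(\mu+\varepsilon_0\rho\lVert\nu)$ together with lower semicontinuity forces $G(\mu+\varepsilon_n\rho\lVert\nu)\to G(\mu\lVert\nu)$; combining this with Fatou applied to $\int e^{g^*_{\varepsilon_n}}d\nu$ and dominated convergence applied to $\int g^*_{\varepsilon_n}d(\mu+\varepsilon_n\rho)$ identifies $\tilde g$ as an optimizer of $G(\mu\lVert\nu)$, so by the uniqueness clause of Theorem \ref{optimizer}, $\tilde g = g^*$ on $\mathrm{supp}(\mu)\cup\mathrm{supp}(\nu)$.

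To conclude I would decompose $\int g^*_{\varepsilon_n}d\rho = \int g^*_{\varepsilon_n}d\rho_+ - \int g^*_{\varepsilon_n}d\rho_-$. Since $\rho_-\leq\mu/\varepsilon_0$ one has $\mathrm{supp}(\rho_-)\subseteq\mathrm{supp}(\mu)$, so dominated convergence with the envelope $a(x_0)+a(x)$ gives $\int g^*_{\varepsilon_n}d\rho_- \to \int\tilde g d\rho_- = \int g^* d\rho_-$. For the $\rho_+$ piece, the extension formula \eqref{the_opt} yields $g^*_{\varepsilon_n}(x)\leq g^*_{\varepsilon_n}(y_0)+c(x,y_0)$ for any fixed $y_0\in\mathrm{supp}(\nu)$, and since $g^*_{\varepsilon_n}(y_0)\to g^*(y_0)$ this supplies a $\rho_+$-integrable upper envelope $a(y_0)+a(x)+O(1)$; reverse Fatou then delivers $\limsup\int g^*_{\varepsilon_n}d\rho_+ \leq \int\tilde g d\rho_+$, and the maximality property \eqref{biggest} of $g^*$ as the \emph{largest} Lipschitz extension of $g^*|_{\mathrm{supp}(\nu)}$ forces $\tilde g\leq g^*$ pointwise, giving $\int\tilde g d\rho_+ \leq \int g^* d\rho_+$. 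The main obstacle is precisely this last step: the limit $\tilde g$ may genuinely fail to equal $g^*$ outside $\mathrm{supp}(\mu)\cup\mathrm{supp}(\nu)$, and it is only because the canonical extension in \eqref{the_opt} is the maximal (not merely some) Lipschitz extension of its boundary data that the one-sided inequality required for the upper bound survives the limit.
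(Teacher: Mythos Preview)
Your argument is correct and follows the same overall architecture as the paper's proof: the lower bound via testing with $g^*$ and Lemma~\ref{beyond_bounded}, then compactness of the normalized optimizers $g^*_\varepsilon$ via Arzel\`a--Ascoli on the $K_m$'s, identification of the limit $\tilde g$ as an optimizer of $G(\mu\lVert\nu)$ by Fatou/dominated convergence, and finally the comparison $\tilde g\leq g^*$ through the maximality statement \eqref{biggest}.

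There is one genuine difference worth noting. For the upper bound the paper does not bound the difference quotient directly; instead it sets $f(\varepsilon)=G(\mu+\varepsilon\rho\lVert\nu)$, uses convexity to find a sequence $\varepsilon_n\downarrow 0$ of differentiability points, shows $f'(\varepsilon_n)=\int g^*_{\varepsilon_n}d\rho$ via the two-sided version of your Lemma~\ref{beyond_bounded} inequality, and then invokes a result from convex analysis (\cite[Theorem 24.1]{roc}) to conclude $f'_+(0)=\lim_n f'(\varepsilon_n)$. Your route is more elementary: the single inequality $G(\mu+\varepsilon\rho\lVert\nu)-G(\mu\lVert\nu)\leq \varepsilon\int g^*_\varepsilon d\rho$, obtained by applying Lemma~\ref{beyond_bounded} to $g^*_\varepsilon$ at $\mu$, already suffices, and the Rockafellar citation becomes unnecessary. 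You are also more explicit than the paper in the final step: the paper simply writes $g_0^*\leq g^*\Rightarrow \int g_0^*d\rho\leq\int g^*d\rho$, whereas you correctly observe that for the signed measure $\rho$ one must use both $\tilde g\leq g^*$ on $\mathrm{supp}(\rho_+)$ and $\tilde g = g^*$ on $\mathrm{supp}(\rho_-)\subseteq\mathrm{supp}(\mu)$. (Minor remark: for the $\rho_+$ piece dominated convergence with the envelope $a(x_0)+a(\cdot)\in L^1(\rho_+)$ already gives $\int g^*_{\varepsilon_n}d\rho_+\to\int\tilde g\,d\rho_+$, so reverse Fatou is not needed.)
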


\begin{proof}
We use the variational formula (\ref{Vari}) for $G(\mu+\varepsilon\rho\lVert \nu)$, where $\mu+\varepsilon\rho\in\mathcal{P}(S)$ and $\rho_+\in L^1(a)$. Recall that $g^*$ is the optimizer for (\ref{Vari}). Using Lemma \ref{beyond_bounded} with $\theta = \mu+\varepsilon\rho$,
\begin{align*}
G(\mu+\varepsilon\rho\lVert \nu) &=\sup_{g\in\Gamma}\left\{\int_{S}g d(\mu+\varepsilon\rho)-\log\int_{S}e^{g}%
d\nu\right\} \\
&\geq \int_S g^*d(\mu+\varepsilon\rho) - \log\int_S e^{g^*} d\nu\\
&= \varepsilon\int_S g^*d\rho + \int_S g^*d\mu-\log\int_S e^{g^*}d\nu\\
&=\varepsilon \int_S g^*d\rho + G(\mu\lVert\nu). 
\end{align*}
Thus
\begin{equation}
    \liminf_{\varepsilon\to 0^+} \frac{1}{\varepsilon}\left(G(\mu+\varepsilon \rho\lVert \nu)-G(\mu\lVert\nu)\right)\geq\int_S g^* d\rho.
    \label{eqn:LB}
\end{equation}

The other direction is more delicate. Take $f(\varepsilon) = G(\mu+\varepsilon\rho\lVert\nu)$. From Lemma \ref{basic} we know that $f$ is convex, lower semicontinuous and finite on $[0,\varepsilon_0]$. Using a property of convex functions in one dimension, we know $f$ is differentiable on $(0,\varepsilon_0)$ except for a countable number of points. Take $\varepsilon\in(0,\varepsilon_0)$ to be a place where $f$ is differentiable, and $\delta>0$ small. Take $g^*_\varepsilon\in \mbox{Lip}(c,S)$ to be the optimizer for $G(\mu+\varepsilon\rho\lVert\nu)$ satisfying $g^*_\varepsilon(0)=0$, so that
$$G(\mu+\varepsilon\rho\lVert\nu)=\int_S g^*_\varepsilon d(\mu+\varepsilon\rho) - \log\int_S e^{g^*_\varepsilon} d\nu.$$
Then using an argument that already appeared in this proof, we have 
$$G(\mu+(\varepsilon+\delta)\rho\lVert\nu) - G(\mu+\varepsilon\rho\lVert\nu )\geq \delta\int_S g^*_\varepsilon d\rho,$$
and 
$$G(\mu+(\varepsilon-\delta)\rho\lVert\nu) - G(\mu+\varepsilon\rho\lVert\nu)\geq -\delta\int_S g^*_\varepsilon d\rho.$$
It follows that 
\begin{align*}
\int_S g^*_\varepsilon d\rho&\leq \lim_{\delta\to 0} \frac{1}{\delta} \left(G(\mu+(\varepsilon+\delta)\rho\lVert\nu) - G(\mu+\varepsilon\rho\lVert\nu ) \right)\\
&= f'(\varepsilon)\\
&= \lim_{\delta\to 0}\frac{1}{\delta} \left(G(\mu+\varepsilon\rho\lVert\nu) - G(\mu+(\varepsilon - \delta)\rho\lVert\nu ) \right) \\
&\leq \int_S g^*_\varepsilon d\rho.
\end{align*}
and therefore 
\begin{equation}
    f'(\varepsilon) = \int_S g^*_\varepsilon d\rho.\label{eqn:fderiv}
\end{equation}

If we denote 
$$f'_+(0) = \lim_{\varepsilon\to 0^+} \frac{1}{\varepsilon}(f(\varepsilon)-f(0)),$$
then by a property of convex functions \cite[Theorem 24.1]{roc},
for any sequence of $\left\{\varepsilon_n\right\}_{n\in\mathbb{N}}$ such that $\varepsilon_0 >  \varepsilon_n\downarrow 0$ and $f$ is differentiable at $\varepsilon_n>0$, we have 
$$f'_+(0)  = \lim_{n\to\infty}f'(\varepsilon_n) = \lim_{n\to\infty} \int_S g^*_{\varepsilon_n}d\rho.$$
By the same argument used in the proof of Theorem \ref{optimizer} (paragraphs following Lemma \ref{opt_integrability}), i.e., by applying the Arzel\`{a}-Ascoli theorem to $\{g_{\varepsilon_n}\}$ on each compact set $K_m\subset S$, and then doing a diagonalization argument, 
there exists a subsequence of $\left\{n_k\right\}_{k\geq 0} \subset \left\{n\right\}_{n\geq 0}$, such that $g_{\varepsilon_{n_k}}^*$ converges pointwise to a function that we denote by $g_0^*\in\mathrm{Lip}(c,S)$. To simplify the notation, let $n$ denote the convergent subsequence.

Since $\rho = \rho_+ - \rho_-$, where $\rho_+\in L^1(a)$ and $\mu+\varepsilon_0 \rho\in P(S)$, $\mu\in L^1(a)$ implies $\rho_-\in L^1(a)$, therefore
$$\int_S a d|\rho| < \infty.$$
Here $|\rho|=\rho_+ + \rho_-$. Recall that for any $\varepsilon\in(0,\varepsilon_0)$, $g^*_\varepsilon(0)=0$. For any $x\in S$,
$$g^*_\varepsilon(x)\leq g^*_{\varepsilon}(0) + c(0,x) \leq a(0) + a(x).$$
Thus by the dominated convergence theorem
$$f'_+(0) = \lim_{n\to\infty} \int _S g^*_{\varepsilon_{n}} d\rho = \int_S g^*_0 d\rho.$$

Lastly, to connect $g_0^*$ back to $g^*$ defined in (\ref{the_opt}), note that by the lower semincontinuity of $G(\cdot\lVert\nu)$,
\begin{align*}
G(\mu\lVert\nu)&\leq \liminf_{n\to\infty} G(\mu+\varepsilon_{n}\rho\lVert\nu)\\
& = \liminf_{n\to\infty}\left( \int_S g^*_{\varepsilon_{n}}d(\mu+\varepsilon_{n}\rho) - \log\int_S e^{g^*_{\varepsilon_{n}}}d\nu\right)\\
& = \liminf_{n\to\infty} \int_S g^*_{\varepsilon_{n}}d(\mu+\varepsilon_{n}\rho) - \limsup_{n\to\infty}\log\int_S e^{g^*_{\varepsilon_{n}}}d\nu\\
&\leq \int_S g^*_0 d\mu -\log\int_S e^{g_0^*}d\nu \\
&\leq G(\mu\lVert\nu).
\end{align*}
The second inequality uses dominated convergence, \eqref{eqn:fderiv},
and that by Fatou's lemma
$$\limsup_{n\to\infty} \int_S e^{g^*_{\varepsilon_{n}}}d\nu\geq\liminf_{n\to\infty} \int_S e^{g^*_{\varepsilon_{n}}}d\nu \geq \int_S e^{g^*_0}d\nu.$$
The third inequality uses Lemma \ref{beyond_bounded}.

Since both sides of the inequality coincide, $g^*_0$ must be the optimizer for variational expression (\ref{Vari}). By Theorem \ref{optimizer} and  equation (\ref{biggest}),
we have $g^*_0(x) \leq g^*(x)$ for all $x\in S$.
Thus
\begin{equation}
 f'_+(0)=\int_Sg^*_0d\rho \leq \int_Sg^*d\rho,  \label{eqn:UB}
\end{equation}
the other direction of the inequality is proved. 
Combining \eqref{eqn:UB} and \eqref{eqn:LB} gives 
$$\lim_{\varepsilon\to 0^+} \frac{1}{\varepsilon}\left(G(\mu+\varepsilon\rho\lVert\nu) - G(\mu\lVert\nu)\right)=\int_S g^*d\rho.$$
\end{proof}

\begin{remark}
When $\rho\in\mathcal{M}_0(S)$ is taken such that there exists $\varepsilon_0>0$ such that for $\varepsilon\in[-\varepsilon_0,\varepsilon_0]$, $\mu+\varepsilon\rho\in P(S)$, then by applying the above theorem to $\rho$ and $-\rho$ respectively, we can conclude $G(\mu+\varepsilon\rho\lVert\nu)$ as a function of $\varepsilon$ is differentiable at $\varepsilon = 0$ with derivative $\int_S g^*d\rho$.
\end{remark}

\begin{remark}
We call $g^*$ defined in (\ref{the_opt}) the unique potential associated with $G(\mu\lVert\nu)$. This $g^*$ is similar to the Kantorovich potential in the optimal transport literature. However, for the optimal transport cost $W_c(\mu,\nu)$ more conditions are needed(e.g. \cite{san1}[Proposition 7.18]) to ensure the uniqueness of the Kantorovich potential. Here under very mild conditions we are able to confirm the uniqueness of the potential, and prove that it is the directional derivative of the corresponding $\Gamma$-divergence, as is case of the Kantorovich potential for optimal transport cost when its uniqueness is established.  
\end{remark}

\section{Limits and Approximations of $\Gamma$-divergence}

In this section, we consider limits that are obtained as the admissible set gets large or small,
and the $\Gamma$-divergence will be approximated by relative entropy or a transport distance, respectively. We also consider in special cases more informative expansions.
Throughout the section we assume the conditions of Theorem \ref{optimizer}.

Fix an admissible set of $\Gamma_0 \subset C_b(S)$, and take $\Gamma = b\Gamma_0 =\left\{b\cdot  f:f\in\Gamma_0\right\}$ for $b>0$. Then the following proposition holds.
\begin{proposition}\label{lim_RE}
For $\mu,\nu\in \mathcal{P}(S)$, 
$$\lim_{b\to\infty} G_{b\Gamma_0}(\mu\lVert\nu) = R(\mu\lVert\nu).$$
\end{proposition}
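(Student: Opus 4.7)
My plan is to prove this by squeezing: a uniform upper bound by $R(\mu\lVert\nu)$ together with monotonicity gives one inequality, and the inf-convolution representation from Theorem \ref{thm:main} gives the matching lower bound.

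First I would verify the easy structural facts. For each $b>0$, the set $b\Gamma_0$ is admissible: convexity, closedness, symmetry, containment of constants, and the determining property all transfer from $\Gamma_0$ under positive scaling. A short computation from the definitions gives $W_{b\Gamma_0}(\eta)=b\,W_{\Gamma_0}(\eta)$. Since $0\in\Gamma_0$ and $\Gamma_0$ is convex, for $0<b\leq b'$ and $g\in\Gamma_0$ we can write $(b/b')g=(b/b')g+(1-b/b')\cdot 0\in\Gamma_0$, so $b\Gamma_0\subset b'\Gamma_0$, and therefore $b\mapsto G_{b\Gamma_0}(\mu\lVert\nu)$ is non-decreasing. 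The inclusion $b\Gamma_0\subset M_b(S)$ together with \eqref{eqn:var_forms} yields $G_{b\Gamma_0}(\mu\lVert\nu)\leq R(\mu\lVert\nu)$ for every $b>0$. Hence $L\doteq\lim_{b\to\infty}G_{b\Gamma_0}(\mu\lVert\nu)$ exists in $[0,\infty]$ and satisfies $L\leq R(\mu\lVert\nu)$.

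For the reverse inequality, by Theorem \ref{thm:main} and $W_{b\Gamma_0}=bW_{\Gamma_0}$,
\[
G_{b\Gamma_0}(\mu\lVert\nu)=\inf_{\gamma\in\mathcal{P}(S)}\left\{R(\gamma\lVert\nu)+b\,W_{\Gamma_0}(\mu-\gamma)\right\}.
\]
If $L=\infty$ then $R(\mu\lVert\nu)=\infty$ and we are done, so assume $L<\infty$. For each $b$ choose $\gamma_b\in\mathcal{P}(S)$ with $R(\gamma_b\lVert\nu)+bW_{\Gamma_0}(\mu-\gamma_b)\leq G_{b\Gamma_0}(\mu\lVert\nu)+1/b\leq L+1$. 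Two consequences follow: $R(\gamma_b\lVert\nu)$ is bounded, so $\{\gamma_b\}$ is tight by \cite[Lemma 1.4.3(c)]{dupell4}; and $W_{\Gamma_0}(\mu-\gamma_b)\leq (L+1)/b\to 0$. Passing to a weakly convergent subsequence $\gamma_{b_n}\Rightarrow\gamma_\infty$, the linear map $\gamma\mapsto\mu-\gamma$ is continuous from the weak topology on $\mathcal{P}(S)$ into the weak topology on $\mathcal{M}(S)$, and $W_{\Gamma_0}$ is lower semicontinuous in that latter topology (being a supremum of continuous linear functionals). Therefore $W_{\Gamma_0}(\mu-\gamma_\infty)\leq\liminf_n W_{\Gamma_0}(\mu-\gamma_{b_n})=0$. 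Symmetry of $\Gamma_0$ upgrades this to $\int g\,d(\mu-\gamma_\infty)=0$ for all $g\in\Gamma_0$, and the determining property of $\Gamma_0$ forces $\gamma_\infty=\mu$. Applying the lower semicontinuity of $R(\cdot\lVert\nu)$ now gives
\[
R(\mu\lVert\nu)=R(\gamma_\infty\lVert\nu)\leq\liminf_n R(\gamma_{b_n}\lVert\nu)\leq L,
\]
which completes the proof.

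The main obstacle I anticipate is the passage to the limit in the penalty term: we need to turn the soft statement $bW_{\Gamma_0}(\mu-\gamma_b)$ bounded into the hard statement that every weak subsequential limit of $\{\gamma_b\}$ equals $\mu$. This is where the admissibility hypotheses on $\Gamma_0$ (convexity plus symmetry plus being determining) are pulling their weight, together with lower semicontinuity of $W_{\Gamma_0}$. Everything else is a standard upper-bound-plus-monotonicity argument, and the case $R(\mu\lVert\nu)=\infty$ is absorbed automatically by the contradiction one obtains if $L$ were finite.
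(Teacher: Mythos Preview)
Your proof is correct and follows essentially the same skeleton as the paper's: the upper bound $G_{b\Gamma_0}\leq R$, then the inf-convolution formula, tightness of near-optimal $\gamma_b$ via a relative-entropy level set, identification of any weak subsequential limit with $\mu$ using $W_{\Gamma_0}(\mu-\gamma_b)\to 0$ and the determining property, and finally lower semicontinuity of $R(\cdot\lVert\nu)$.

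Two tactical differences are worth noting. First, the paper invokes Theorem~\ref{optimizer} to obtain \emph{exact} minimizers $\gamma_b^*$, which requires the standing hypotheses of that section (Conditions~\ref{con:crep}, \ref{mea-det}, \ref{finite}, \ref{Cond:c_cont} and $\mu,\nu\in L^1(a)$); your use of $1/b$-near-optimizers relies only on Theorem~\ref{thm:main} and hence works for any admissible $\Gamma_0$, a modest but genuine generalization. Second, your preliminary monotonicity observation ($b\Gamma_0\subset b'\Gamma_0$ for $b\leq b'$) guarantees the limit $L$ exists a priori and lets you dispense with the paper's separate case analysis on whether $R(\mu\lVert\nu)$ is finite; instead you split on whether $L$ is finite, which absorbs the $R=\infty$ case automatically via the contradiction at the end.
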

\begin{proof}
We separate the proof into two cases, $R(\mu\lVert\nu)<\infty$ and $R(\mu\lVert\nu)=\infty$.

\vspace{\baselineskip}\noindent
1) If $R(\mu\lVert\nu)<\infty$, then for any $b>0$, 
\begin{align}
G_{b\Gamma_0}(\mu\lVert\nu) = \inf_{\gamma\in\mathcal{P}(S)}\left\{W_{b\Gamma_0}(\mu,\gamma)+R(\gamma\lVert\nu)\right\}\leq R(\mu\lVert\nu)<\infty.\label{eqn:bbounds}
\end{align}
From Theorem \ref{optimizer} we know there exists a unique optimizer $\gamma^*$ for each $b$, which we write as $\gamma_b^*$. Note that
 $$R(\gamma_b^*\lVert \nu)
 \leq R(\mu\lVert\nu)<\infty,$$
and therefore  $\left\{\gamma_b^*\right\}_{b>0}$ is precompact in the weak topology \cite[Lemma 1.4.3(c)]{dupell4}.
 Given any subsequence $b_k$, there exists a further subsequence (again denoted by $b_k$) and $\gamma_{\infty}^*\in\mathcal{P}(S)$ such that $\gamma^*_{b_k}\Rightarrow\gamma^*_\infty$. On the other hand,
 \begin{align*}
 W_{b\Gamma_0}(\mu,\gamma^*_b) &= \sup_{f\in b\Gamma_0}\left\{\int_S f d(\mu-\gamma_b^*)\right\}\\
 &=b\sup_{f\in \Gamma_0}\left\{\int_S f d(\mu-\gamma_b^*)\right\}=b W_{\Gamma_0}(\mu,\gamma_b^*),
 \end{align*}
 and 
 $W_{b\Gamma_0}(\mu,\gamma_b^*)\leq G_{b\Gamma_0}(\mu\lVert\nu) \leq R(\mu\lVert\nu)<\infty.$
 Thus
 \begin{align*}
 W_{\Gamma_0}(\mu,\gamma_\infty^*)&\leq \liminf_{k\to\infty}W_{\Gamma_0}(\mu,\gamma_{b_k}^*) = \liminf_{k\to\infty}\frac{1}{b_k}W_{b_k\Gamma_0}(\mu,\gamma_{b_k}^*)\\
 &\leq \liminf_{k\to\infty}\frac{1}{b_k}R(\mu\lVert\nu)=0,
 \end{align*}
and since $\Gamma_0$ is admissible,
 $\gamma_\infty^*=\mu$.  We thus conclude that
 \begin{align*}
 \liminf_{k\to\infty}G_{b_k\Gamma_0}(\mu\lVert\nu)&=\liminf_{k\to\infty}\left(W_{b_k\Gamma_0}(\mu,\gamma_{b_k}^*)+R(\gamma_{b_k}^*\lVert \nu)   \right)\\
 &\geq \liminf_{k\to\infty}R(\gamma_{b_k}^*\lVert \nu)\\
 & \geq R(\mu\lVert\nu),
 \end{align*}
 and since the original subsequence was arbitrary  
 $$\liminf_{b\to\infty}G_{b\Gamma_0}(\mu\lVert\nu)\geq R(\mu\lVert\nu).$$
 On the other hand, we have by \eqref{eqn:bbounds} that
 $$\limsup_{b\to\infty}G_{b\Gamma_0}(\mu\lVert\nu)\leq R(\mu\lVert\nu),$$
 and the statement is proved.
 
\vspace{\baselineskip}\noindent
 2) $R(\mu\lVert\nu)=\infty.$ For this case, we want to prove that 
 $$\liminf_{b\to\infty}G_{b\Gamma_0}(\mu\lVert\nu)=\infty.$$
 If not, then there exists a subsequence $\left\{b_k\right\}_{b\in\mathbb{N}}$ such that 
 $$\lim_{k\to\infty}G_{b_k\Gamma_0}(\mu\lVert\nu)<\infty.$$
 For this subsequence, we can apply the argument used in part 1) to conclude there exists $\gamma_{b_k}^*$ such that 
 $$G_{b_k\Gamma_0}(\mu\lVert\nu)=W_{b_k\Gamma_0}(\mu,\gamma_{b_k}^*)+R(\gamma_{b_k}^*\lVert\nu)  .$$
Moreover there exists a further subsequence of this sequence, which for simplicity we also denote by $\left\{b_k\right\}_{k\in\mathbb{N}}$, which satisfies $\gamma_{b_k}^*\Rightarrow \mu$. Then by the same argument as in 1), we would conclude
$$\lim_{k\to\infty}G_{b_k\Gamma_0}(\mu\lVert\nu)\geq R(\mu\lVert\nu)=\infty.$$
This contradiction proves the statement.
\end{proof}

\vspace{\baselineskip}
On the other hand, if $\Gamma=\delta\Gamma_0$ for small $\delta>0$, we can approximate the $\Gamma$-divergence in terms of the $W_{\Gamma_0}$.

\begin{proposition}
For $\mu,\nu\in\mathcal{P}(S)$
$$\lim_{\delta\to 0}\frac{1}{\delta} G_{\delta\Gamma_0}(\mu\lVert\nu)=W_{\Gamma_0}(\mu,\nu).$$
\end{proposition}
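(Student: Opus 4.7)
The plan is to rescale so that the admissible set becomes $\Gamma_0$ and then analyze the behavior of the Legendre-type expression $\frac{1}{\delta}\log\int_S e^{\delta g}d\nu$ as $\delta\to 0$. Specifically, by the change of variable $g\mapsto \delta g$ in the sup we have
\[
\frac{1}{\delta}G_{\delta\Gamma_0}(\mu\lVert\nu)=\sup_{g\in\Gamma_0}\left\{\int_S g\, d\mu - \frac{1}{\delta}\log\int_S e^{\delta g}d\nu\right\},
\]
and heuristically $\frac{1}{\delta}\log\int_S e^{\delta g}d\nu\to\int_S g\,d\nu$ as $\delta\to 0$, which would give the limit $\sup_{g\in\Gamma_0}\int_S g\,d(\mu-\nu)=W_{\Gamma_0}(\mu,\nu)$. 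The task is to make this rigorous as matching upper and lower bounds.

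For the upper bound, I would invoke Lemma \ref{basic}, which gives $G_{\delta\Gamma_0}(\mu\lVert\nu)\leq W_{\delta\Gamma_0}(\mu-\nu)=\delta\, W_{\Gamma_0}(\mu,\nu)$ since scaling $\Gamma_0$ by $\delta$ scales $W$ by $\delta$. Dividing by $\delta$ and taking $\limsup$ yields
\[
\limsup_{\delta\to 0}\frac{1}{\delta}G_{\delta\Gamma_0}(\mu\lVert\nu)\leq W_{\Gamma_0}(\mu,\nu).
\]

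For the lower bound, fix any $g\in\Gamma_0$. Since $g$ is bounded, the elementary estimates $|e^{\delta g}-1-\delta g|\leq \tfrac{1}{2}\delta^2\|g\|_\infty^2 e^{\delta\|g\|_\infty}$ and $|\log(1+x)-x|=O(x^2)$ for small $x$ give
\[
\frac{1}{\delta}\log\int_S e^{\delta g}d\nu=\int_S g\,d\nu+O(\delta\|g\|_\infty^2 e^{\delta\|g\|_\infty})\xrightarrow[\delta\to 0]{}\int_S g\,d\nu.
\]
Using $g$ as a test function in the variational representation, this gives
\[
\liminf_{\delta\to 0}\frac{1}{\delta}G_{\delta\Gamma_0}(\mu\lVert\nu)\geq\int_S g\,d\mu-\int_S g\,d\nu=\int_S g\,d(\mu-\nu).
\]
Taking the supremum over $g\in\Gamma_0$ on the right yields $\liminf_{\delta\to 0}\frac{1}{\delta}G_{\delta\Gamma_0}(\mu\lVert\nu)\geq W_{\Gamma_0}(\mu,\nu)$, completing the proof.

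There is no real obstacle here: the upper bound is immediate from Lemma \ref{basic}, and the lower bound only requires the test-function argument with a pointwise Taylor expansion, which is fine since each $g\in\Gamma_0\subset C_b(S)$ is individually bounded. (Notably, one does not need uniformity in $g$, because one takes the $\liminf$ for each fixed $g$ and only then takes the supremum over $\Gamma_0$.)
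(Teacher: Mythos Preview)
Your proof is correct, and it takes a genuinely different route from the paper's for the lower bound. Both arguments obtain the upper bound the same way (Jensen's inequality, equivalently Lemma \ref{basic}(3)). For the lower bound, however, the paper works on the inf-convolution side: it invokes Theorem \ref{optimizer} to produce optimizers $\gamma_\delta^*$ with $G_{\delta\Gamma_0}(\mu\lVert\nu)=W_{\delta\Gamma_0}(\mu,\gamma_\delta^*)+R(\gamma_\delta^*\lVert\nu)$, uses the relative-entropy bound to show $R(\gamma_\delta^*\lVert\nu)\to 0$ (hence $\gamma_\delta^*\Rightarrow\nu$ along subsequences), and then concludes via lower semicontinuity of $W_{\Gamma_0}(\mu,\cdot)$. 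Your approach stays entirely on the sup side: fix $g\in\Gamma_0$, use the elementary expansion $\frac{1}{\delta}\log\int_S e^{\delta g}d\nu\to\int_S g\,d\nu$ (valid because each $g$ is bounded), and then sup over $g$. This is shorter and more elementary; in particular it does not rely on the existence of optimizers or on Conditions \ref{con:crep}, \ref{mea-det}, \ref{finite}, \ref{Cond:c_cont}, so it in fact proves the proposition for any admissible $\Gamma_0$. What the paper's argument buys in exchange is structural information: it identifies the limiting behavior $\gamma_\delta^*\to\nu$ of the inf-convolution optimizer, which your approach does not see.
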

\begin{proof}
For any $\delta>0$, Jensen's inequality implies
\begin{align*}
\frac{1}{\delta}G_{\delta \Gamma_0}(\mu\lVert\nu) &= \frac{1}{\delta} \sup_{g\in\delta\Gamma_0}\left\{\int_S g d\mu - \log\int_S e^gd\nu \right\}\\
&\leq \frac{1}{\delta} \sup_{g\in\delta\Gamma_0}\left\{\int_S g d\mu - \int_S g d\nu\right\}\\
&=\sup_{g\in\Gamma_0}\left\{\int_S g d\mu - \int_S g d\nu\right\}\\
&= W_{\Gamma_0}(\mu,\nu),
\end{align*}
and therefore 
$$\limsup_{\delta\to 0}\frac{1}{\delta}G_{\delta \Gamma_0}(\mu\lVert\nu)\leq W_{\Gamma_0}(\mu,\nu).$$

For the reverse inequality we consider two cases.

\vspace{\baselineskip}\noindent
1) $W_{\Gamma_0}(\mu,\nu)<\infty.$ For $0<\delta<1$ the argument used above shows  
$$G_{\delta\Gamma_0}(\mu\lVert\nu)\leq \delta W_{\Gamma_0}(\mu,\nu) \leq W_{\Gamma_0}(\mu,\nu)
<\infty.$$
By Theorem \ref{optimizer}, we know there exists $\gamma^*_\delta\in\mathcal{P}(S)$, such that 
$$G_{\delta\Gamma_0}(\mu\lVert\nu) = W_{\delta\Gamma_0}(\mu,\gamma^*_\delta)+R(\gamma^*_\delta\lVert\nu).$$
Since $R(\gamma^*_\delta\lVert\nu)<G_{\delta\Gamma_0}(\mu\lVert\nu)\leq W_{\Gamma_0}(\mu,\nu)$ for $\delta\in(0,1)$, for any sequence $\delta_k\subset (0,1)$ there a further a subsequence (again denoted $\delta_k$) such that $\delta_k$ is decreasing, $\lim_{k\to\infty}\delta_k=0$, and $\gamma^*_{\delta_k}$ converges weakly to a probability measure, which we denote as $\gamma_0^*$. Then
by the lower semicontinuity of $R(\cdot \lVert\nu)$
$$R(\gamma_0^*\lVert\nu)\leq \liminf_{k\to\infty} R(\gamma_{\delta_k}^*\lVert\nu)\leq \liminf_{k\to\infty}G_{\delta_k\Gamma_0}(\mu,\nu)\leq \lim_{k\to\infty}\delta_k W_{\Gamma_0}(\mu,\nu)=0.$$
Since $R(\gamma_0^*\lVert\nu)\geq 0$ with equality if and only if $\gamma_0^*=\nu$, we conclude $R(\gamma_0^*\lVert\nu)=0$ and $\gamma_0^*=\nu$. Therefore
\begin{align*}
\liminf_{k\to\infty}\frac{1}{\delta_k}G_{\delta_k \Gamma_0}(\mu\lVert\nu)&\geq \liminf_{k\to\infty}\frac{1}{\delta_k}W_{\delta_k\Gamma_0}(\mu,\gamma^*_{\delta_k})\\
&=\liminf_{k\to\infty}W_{\Gamma_0}(\mu,\gamma^*_{\delta_k})\\
&\geq W_{\Gamma_0}(\mu,\gamma^*_0)=W_{\Gamma_0}(\mu,\nu),
\end{align*}
and since the original sequence was arbitrary
$$\liminf_{\delta\to 0}\frac{1}{\delta}G_{\delta\Gamma_0}(\mu\lVert\nu)\geq W_{\Gamma_0}(\mu,\nu).$$

\vspace{\baselineskip}\noindent
2) $W_{\Gamma_0}(\mu,\nu)=\infty.$
If $\liminf_{\delta\to 0}\frac{1}{\delta}G_{\delta\Gamma_0} (\mu\lVert\nu)<\infty$, then there is a subsequence $\left\{\delta_l\right\}_{l\in\mathbb{N}}\subset (0,1)$ that achieves this $\liminf$. 
From essentially the same proof above applied to this subsequence, it can be shown there exists a further subsequence (again denoted $\left\{\delta_{l}\right\}$)  and  $\gamma_0^*\in\mathcal{P}(S)$ such that 
$$G_{\delta_{l}\Gamma_0}(\mu\lVert\nu) =W_{\delta_{l}\Gamma_0}(\mu,\gamma^*_\delta)+ R(\gamma^*_{\delta_{l}}\lVert\nu),$$
and 
$$\gamma^*_{l}\Rightarrow \gamma_0^*.$$
Denote $M\doteq \liminf_{\delta\to 0}\frac{1}{\delta}G_{\delta\Gamma_0} (\mu\lVert\nu)=\lim_{l\to\infty}\frac{1}{\delta_{l}}G_{\delta_{l}\Gamma_0} (\mu\lVert\nu)<\infty$. 
Since for $l$ large enough
$$R(\gamma_{\delta_{l}^*}\lVert\nu)\leq G_{\delta_{l}\Gamma_0}(\mu\lVert\nu)\leq \delta_{l}(M+1),$$
we have 
$$R(\gamma_0^*\lVert\nu)\leq \liminf_{l\to\infty}R(\gamma_{\delta_{l}}^*\lVert\nu)\leq \lim_{l\to\infty}\delta_{l}(M+1)=0,$$
and thus $\gamma_0^*=\nu$. However this leads to 
\begin{align*}
M=\lim_{l\to\infty}\frac{1}{\delta_{l}}G_{\delta_{l}\Gamma_0} (\mu\lVert\nu)&\geq \lim_{l\to\infty}\frac{1}{\delta_{l}}W_{\delta_{l}\Gamma_0} (\mu,\gamma_{\delta_{l}}^*)\\
&=\lim_{l\to\infty}W_{\Gamma_0}(\mu,\gamma_{\delta_{l}}^*)\geq W_{\Gamma_0}(\mu,\nu)=\infty.
\end{align*}
This contradiction implies $$\liminf_{\delta\to 0}\frac{1}{\delta}G_{\delta\Gamma_0} (\mu\lVert\nu)=\infty = W_{\Gamma_0}(\mu,\nu).$$
\end{proof}

\vspace{\baselineskip}
We now consider more refined approximations when $b$ is large.
Previously we described the limiting behavior when we vary the size of $\Gamma$. From Proposition \ref{lim_RE}, we know that when $\mu\not\ll\nu$, $\lim_{b\to\infty} G_{b\Gamma_0}(\mu\lVert\nu) = \infty$. In some applications one might use a large transport cost as ``penalty'' so that while allowing non-absolutely continuous perturbations, control on  $G_\Gamma(\mu\lVert\nu)$ will ensure that $\mu$ is not too far away from $\nu$. 

In the rest of this section, we investigate the behavior when $b\to\infty$, and in particular how $G_{b\Gamma_0}(\mu\lVert\nu)$ will behave for fixed $\mu$ and $\nu$. We only consider the case that $\Gamma_0=\mathrm{Lip}(c,S;C_b(S))$ for some function $c$ satisfies the condition of Theorem \ref{massdual}, Assumption \ref{mea-det} and Assumption \ref{finite}, and $\mu,\nu\in L^1(a)$ with $a$ in Assumption \ref{finite}. We separate the cases depending on whether $\mu$ and $\nu$ are discrete or continuous. The results presented here are only for special cases,
and further development of these sorts of expansions would be useful. 

\subsection{Finitely supported discrete measures}
We will consider the case where $\mathrm{supp}(\nu)$ has finite cardinality, and $\mu$ is also discrete with finite support. 

\begin{theorem}
Suppose $\nu$ and $\mu$ are discrete with finite support, where $\mathrm{supp}(\nu) = \left\{x_i\right\}_{1\leq i\leq N}$ and $\mathrm{supp}(\mu) = \left\{y_j\right\}_{1\leq j\leq M}$. Then there exists $\gamma^*\in\mathcal{P}(S)$ with $\gamma^*\ll\nu$ such that 
\begin{equation}
G_{b\Gamma_0}(\mu\lVert\nu) =bW_{\Gamma_0}(\mu,\gamma^*)+  R(\gamma^*\lVert
\nu)+o(b),
     \label{eqn:exp}
\end{equation}
where $o(b)\leq 0$ satisfies $o(b)\rightarrow 0$ as $b\rightarrow \infty$.
Furthermore, we can characterize $\gamma^*$ as the measure that minimizes $R(\gamma\lVert\nu)$ over the collection of $\gamma\in P(S)$ that satisfy the constraint 
\begin{align}\label{min_gamma}
W_{\Gamma_0}(\mu,\gamma) = \inf_{\theta\ll\nu}W_{\Gamma_0}(\mu,\theta).
\end{align}
If we further assume that 
$$c(y_j,x_i) \neq c(y_j,x_l)$$ 
for $1\leq j\leq M$ and $1\leq i\neq l\leq N$, then $\gamma^*$ has the following form.
Let $S_i$ be the indicies $j$ in $\{1,\ldots,M\}$ for which $x_i$ is the point in $\left\{x_l\right\}_{1\leq l\leq N}$ closest to $y_j$.
Then for $1\leq i\leq N$,
$$\gamma^*(\{x_i\})= \sum_{j\in S_i}\mu(\{y_j\}).$$
\end{theorem}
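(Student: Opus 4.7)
The plan is to apply Theorem \ref{optimizer} to obtain the optimal pair $(\gamma_b^*,g_b^*)$ for $G_{b\Gamma_0}(\mu\lVert\nu)$ at each $b$, and then to identify $\gamma^*$ as the $b\to\infty$ limit of $\gamma_b^*$. Because $\nu$ is supported on the finite set $\{x_1,\ldots,x_N\}$, the constraint $\gamma\ll\nu$ confines $\gamma$ to the compact simplex in $\mathbb{R}^N$. I would set $W^*:=\inf\{W_{\Gamma_0}(\mu,\gamma):\gamma\ll\nu\}$; by lower semicontinuity and convexity of $W_{\Gamma_0}(\mu,\cdot)$ the minimizing set $K^*$ is non-empty, closed and convex, and I define $\gamma^*$ as the unique minimizer of $R(\cdot\lVert\nu)$ on $K^*$, existence and uniqueness coming from strict convexity and lsc of $R(\cdot\lVert\nu)$ on the finite-dimensional set where it is finite. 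This directly gives the characterization \eqref{min_gamma}, and plugging $\gamma^*$ into the inf-convolution of Theorem \ref{thm:main} yields the upper bound $G_{b\Gamma_0}(\mu\lVert\nu)\le bW_{\Gamma_0}(\mu,\gamma^*)+R(\gamma^*\lVert\nu)$, so $o(b):=G_{b\Gamma_0}(\mu\lVert\nu)-bW_{\Gamma_0}(\mu,\gamma^*)-R(\gamma^*\lVert\nu)$ is non-positive.

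Next I would show $o(b)\to 0$. By Theorem \ref{optimizer} one has $G_{b\Gamma_0}(\mu\lVert\nu)=bW_{\Gamma_0}(\mu,\gamma_b^*)+R(\gamma_b^*\lVert\nu)$, and combined with the upper bound and $W_{\Gamma_0}(\mu,\gamma_b^*)\ge W^*$ this delivers
$$R(\gamma_b^*\lVert\nu)\le R(\gamma^*\lVert\nu),\qquad 0\le b\bigl(W_{\Gamma_0}(\mu,\gamma_b^*)-W^*\bigr)\le R(\gamma^*\lVert\nu)-R(\gamma_b^*\lVert\nu).$$
In particular $W_{\Gamma_0}(\mu,\gamma_b^*)\to W^*$. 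By compactness of the simplex, any subsequence of $\{\gamma_b^*\}$ has a further weakly convergent sub-subsequence with limit $\gamma_\infty$; lsc of $W_{\Gamma_0}(\mu,\cdot)$ forces $\gamma_\infty\in K^*$, while lsc of $R(\cdot\lVert\nu)$ combined with the first bound gives $R(\gamma_\infty\lVert\nu)\le R(\gamma^*\lVert\nu)$. Uniqueness of the $R$-minimizer on $K^*$ then yields $\gamma_\infty=\gamma^*$, so the full sequence converges, and lsc squeezed against the bound further gives $R(\gamma_b^*\lVert\nu)\to R(\gamma^*\lVert\nu)$. Writing $o(b)=b(W_{\Gamma_0}(\mu,\gamma_b^*)-W^*)+R(\gamma_b^*\lVert\nu)-R(\gamma^*\lVert\nu)$ and invoking the second inequality above shows $o(b)\to 0$.

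For the explicit form under the distinct-distance hypothesis, Theorem \ref{massdual} identifies $W_{\Gamma_0}(\mu,\gamma)$ with the Kantorovich transport cost for the ground cost $c$. Any coupling from $\mu$ to a $\gamma$ supported in $\{x_1,\ldots,x_N\}$ is specified by weights $\alpha_{ji}\ge 0$ with $\sum_i\alpha_{ji}=1$, describing how the mass at $y_j$ is distributed, and contributes $\mu(\{y_j\})\sum_i\alpha_{ji}c(y_j,x_i)$ to the total cost. With all $c(y_j,x_i)$ distinct across $i$, each inner convex combination is strictly minimized by the point mass on the unique nearest $x_{i(j)}$, so the unique $W^*$-minimizer over $\{\gamma\ll\nu\}$ is $\gamma(\{x_i\})=\sum_{j\in S_i}\mu(\{y_j\})$. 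Hence $K^*$ is a singleton and $\gamma^*$ coincides with the stated formula.

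The main obstacle is the \emph{quantitative} step $b(W_{\Gamma_0}(\mu,\gamma_b^*)-W^*)\to 0$, which is stronger than the qualitative convergence $\gamma_b^*\to\gamma^*$; the squeezing bound $b(W_{\Gamma_0}(\mu,\gamma_b^*)-W^*)\le R(\gamma^*\lVert\nu)-R(\gamma_b^*\lVert\nu)$ paired with the convergence of the relative entropies is exactly what ties everything together. The remaining pieces are either a direct appeal to Theorem \ref{optimizer} or the routine convex-combination argument that turns the distinct-distance hypothesis into uniqueness of the optimal transport plan.
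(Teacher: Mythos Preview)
Your proposal is correct and follows essentially the same route as the paper's proof: define $\gamma^*$ as the $R(\cdot\lVert\nu)$-minimizer over the set where $W_{\Gamma_0}(\mu,\cdot)$ attains its minimum among $\gamma\ll\nu$, get the upper bound by plugging $\gamma^*$ into the inf-convolution, and for the lower bound extract subsequential limits of the (near-)optimizers and use lower semicontinuity together with the characterization of $\gamma^*$. The only noteworthy difference is that you invoke Theorem \ref{optimizer} to work with the \emph{exact} optimizer $\gamma_b^*$, whereas the paper carries an $\varepsilon$-near-optimizer $\gamma_b$ through the argument; your choice removes the $\varepsilon$-bookkeeping and makes the final squeeze $0\le b(W_{\Gamma_0}(\mu,\gamma_b^*)-W^*)\le R(\gamma^*\lVert\nu)-R(\gamma_b^*\lVert\nu)\to 0$ slightly cleaner, but the substance is identical.
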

\begin{remark}
In discrete case, is easily checked that the infimum in (\ref{min_gamma}) is achieved. Take a sequence of $\theta_n\ll\nu$ such that 

$$W_{\Gamma_0}(\mu,\theta_n)\leq \inf_{\theta\ll\nu}W_{\Gamma_0}(\mu,\theta) + 1/n.$$
Since $\theta_n$ is supported on the compact set $\mathrm{supp}(\nu)=\left\{x_i\right\}_{1\leq i\leq N}$
$\left\{\theta_n\right\}_{n\in\mathbb{N}}$ is compact, and hence there exist $\theta^*\ll\nu$ 
and a subsequence $\left\{\theta_{n_k}\right\}_{k\in\mathbb{N}}$
that converges to $\theta^*$ weakly.
By the lower semicontinuity of $W_{\Gamma_0}$
$$W_{\Gamma_0}(\mu,\theta^*) \leq \liminf_{n\to\infty}W_{\Gamma_0}(\mu,\theta_n) \leq \inf_{\theta\ll\nu}W_{\Gamma_0}(\mu,\theta),$$
and therefore $\theta^*$ achieves the infimum of (\ref{min_gamma}).
\end{remark}



\begin{proof}
We use the  representation $G_\Gamma(\mu\lVert\nu)=\inf_{\gamma\in\mathcal{P}(S)}\left\{  R(\gamma\lVert
\nu)+W_{\Gamma}(\mu,\gamma)\right\}$. 
First note that
\begin{align*}
G_{b\Gamma_0}(\mu\lVert\nu) &=\inf_{\gamma\in\mathcal{P}(S)}\left\{  R(\gamma\lVert
\nu)+W_{b\Gamma_0}(\mu,\gamma)\right\}\\
&\leq R(\gamma^*\lVert\nu)+ W_{b\Gamma_0}(\mu,\gamma^*)\\
&=R(\gamma^*\lVert\nu)+ bW_{\Gamma_0}(\mu,\gamma^*).
\end{align*}
Next, fix any $\varepsilon>0$, and take a near optimizer $\gamma_b$, so that for each $b$ 
$$G_{b\Gamma_0}(\mu\lVert\nu)\geq R(\gamma_b\lVert\nu) + W_{b\Gamma_0}(\mu,\gamma_b)-\varepsilon.$$
We must have $\gamma_b\ll\nu$. By (\ref{min_gamma}), we know 
$$W_{b\Gamma_0}(\mu,\gamma_b)= b W_{\Gamma_0}(\mu,\gamma_b) \geq b W_{\Gamma_0}(\mu,\gamma^*)= W_{b\Gamma_0}(\mu,\gamma^*).$$
Thus
\begin{align}
R(\gamma^*\Vert\nu)+W_{b\Gamma_0}(\mu,\gamma^*)&\geq \inf_{\gamma\in P(S)}\left\{  R(\gamma\lVert
\nu)+W_{b\Gamma_0}(\mu,\gamma)\right\} \nonumber\\
& = G_{b\Gamma_0}(\mu,\nu)\nonumber\\
&\geq R(\gamma_b\lVert\nu) + W_{b\Gamma_0}(\mu,\gamma_b)-\varepsilon\nonumber\\
&\geq R(\gamma_b\lVert\nu) + W_{b\Gamma_0}(\mu,\gamma^*)-\varepsilon.\label{eqn:ble}
\end{align}

Since $W_{b\Gamma_0}(\mu,\gamma^*)$ is finite we can subtract it on both sides, and get
$$R(\gamma_b\lVert\nu)\leq R(\gamma^*\lVert\nu)+\varepsilon$$
for any $b<\infty $. Then by \cite[Lemma 1.4.3(c)]{dupell4} $\left\{\gamma_b\right\}_{b\in(0,\infty)}$ is tight. Take a convergent subsequence $\left\{\gamma_{b_k}\right\}$, and denote its limit by $\gamma_{\infty}$. It is easily checked that $\gamma_{\infty}\ll\nu$, so $W_{\Gamma_0}(\mu,\gamma_\infty) \geq W_{\Gamma_0}(\mu,\gamma^*)$. On the other hand, by \eqref{eqn:ble}
\begin{align*}
W_{\Gamma_0}(\mu,\gamma_\infty)- W_{\Gamma_0}(\mu,\gamma^*)&\leq \liminf_{k\to\infty} W_{\Gamma_0}(\mu,\gamma_{b_k})- W_{\Gamma_0}(\mu,\gamma^*)\\
&=\liminf_{k\to\infty}\frac{1}{b_k}(W_{b_k\Gamma_0}(\mu,\gamma_{b_k})-W_{b_k\Gamma_0}(\mu,\gamma^*))\\
&\leq \liminf_{k\to\infty}\frac{1}{b_k}(R(\gamma^*\lVert\nu)-R(\gamma_{b_k}\lVert\nu)+\varepsilon)\\
&\leq \liminf_{k\to\infty}\frac{1}{b_k}(R(\gamma^*\lVert\nu)+\varepsilon)\\
& = 0.
\end{align*}

Thus we conclude that $W_{\Gamma_0}(\mu,\gamma_\infty)= W_{\Gamma_0}(\mu,\gamma^*)$. By the definition of $\gamma^*$  we must have $R(\gamma_\infty\lVert\nu)\geq R(\gamma^*\lVert\nu)$. Choose $k_0$ such that $b_{k_0}\geq 1$.
Then
\begin{align*}
&\liminf_{k\to\infty} \left(G_{b_k\Gamma_0}(\mu\lVert\nu)-[R(\gamma^*\lVert\nu)+b_k W_{\Gamma_0}(\mu,\gamma^*)]\right)\\
&\quad\geq \liminf_{k\to\infty}\left(R(\gamma_{b_k}\lVert\nu)+b_kW_{\Gamma_0}(\mu,\gamma_{b_k})-\varepsilon - (R(\gamma^*\lVert\nu) + b_k W_{\Gamma_0}(\mu,\gamma^*))\right)\\
&\quad\geq \liminf_{k\to\infty} (R(\gamma_{b_k}\lVert\nu)-R(\gamma^*\lVert\nu)) +\liminf_{k\to\infty}b_k(W_{\Gamma_0}(\mu,\gamma_{b_k})-W_{\Gamma_0}(\mu,\gamma^*))-\varepsilon\\
&\quad\geq (R(\gamma_{\infty}\lVert\nu)-R(\gamma^*\lVert\nu)) + \liminf_{k\to\infty}(W_{\Gamma_0}(\mu,\gamma_{b_k})-W_{\Gamma_0}(\mu,\gamma^*))-\varepsilon\\
&\quad\geq 0 + (W_{\Gamma_0}(\mu,\gamma_\infty) - W_{\Gamma_0}(\mu,\gamma^*))-\varepsilon\\
&\quad\geq -\varepsilon
\end{align*}
where the fourth inequality is because $R(\gamma_\infty\lVert\nu) \geq R(\gamma^*\lVert\nu)$ and the lower semi-continuity of $W_{\Gamma_0}(\mu,\cdot)$. Since $\varepsilon>0$ is arbitrary, this establishes \eqref{eqn:exp} along the given subsequence. For any other sequence $\{b_k\}_{k\in\mathbb{N}}$ along which $\lim_{k\to\infty}\left(G_{b_k\Gamma_0}(\mu\lVert\nu)-[R(\gamma^*\lVert\nu)+b_k W_{\Gamma_0}(\mu,\gamma^*)]\right)$ has a limit, we can also take a subsequence from it according to the discussion above. Thus the statement is proved.

The proof of the claimed form for $\gamma^*$ is straightforward and omitted.
\end{proof}

\subsection{An example with $\nu$ is continuous}
To illustrate an interesting scaling phenomenon, here we consider the example with 
$S = \mathbb{R}$, $c(x,y)=|x-y|$,  $\nu=\mbox{Unif}([0,1])$, $\mu=\delta_0$.
Consider $\gamma^*(dx) = c_0 e^{-bx}dx$ and $g^*(x) = -bx$ for $0\leq x\leq 1$, where $c_0$ is the normalizing constant. 
For this example $\Gamma_0=\mathrm{Lip}(c,S;C_b(S))$ is the set of bounded functions over $\mathbb{R}$ with Lipschitz constant 1.  It is easily checked using Theorem \ref{verif} that $\gamma^*$ and $g^*$ are the optimizers in
$$G_{b\Gamma_0}(\mu\lVert\nu)=\inf_{\gamma\in\mathcal{P}(S)}\left\{ 
W_{b\Gamma_0}(\mu,\gamma)+R(\gamma\lVert\nu)\right\}
=\sup_{g\in b\Gamma_0} \left\{\int_S g d\mu-\log\int_S e^g\nu\right\}.$$
Thus we have 
$$G_{b\Gamma_0}(\mu\lVert\nu)=-\int_0^1 bx d\mu - \log\int_0^1 e^{-bx}d\nu =-\log\int_0^1 e^{-bx} dx= \log\left(\frac{b}{1-e^{-b}}\right),$$
and in this case, $G_{b\Gamma_0}(\mu\lVert\nu)$ scales as $\log(b)+o(\log(b))$.

For comparison we consider the optimal transport cost between $\mu$ and $\nu$.
We have
\begin{align*}
W_{bc}(\mu,\nu) &\doteq \sup_{g\in b\Gamma_0}\left\{\int_S g d\mu -\int_S gd\nu\right\} \\
&= b\sup_{g\in \Gamma_0}\left\{\int_S g d\mu -\int_S gd\nu\right\}=bW_c(\mu,\nu) 
\end{align*}
and one can calculate that $W_{c}(\mu,\nu)=1/2.$ Thus $W_{bc}(\mu,\nu)=b/2$, and so $G_{b\Gamma_0}(\mu\lVert\nu)$ gives a much smaller divergence between non absolutely continuous measures $\mu$ and $\nu$ than the corresponding optimal transport cost when the admissible $\Gamma = b\Gamma_0$ is becoming large.

\section{Application to Uncertainty Bounds}

\subsection{Extension to unbounded functions}

From
\[
G_{\Gamma}(\mu\lVert\nu)\doteq\sup_{g\in\Gamma}\left\{  \int_{S}gd\mu-\log
\int_{S}e^{g}d\nu\right\}
\]
we get for all $g\in\Gamma$,
\[
\int_{S}gd\mu\leq G_{\Gamma}(\mu\lVert\nu)+\log\int_{S}e^{g}d\nu.
\]

The inequality above with relative entropy in place of $G_\Gamma(\mu\lVert\nu)$ is the key to uncertainty bounds in \cite{dupkatpanple}. We would like to extend this inequality to unbounded functions.  Define 
$$\hat{\Gamma}_+=\left\{ f:\text{ there exist }g_{i}\in\Gamma\text{ with }c\leq g_{i}%
(x)\uparrow f(x)\text{ for }x\in S\right\},$$
and 
$$\hat{\Gamma}_-=\left\{ f:\text{ there exist }g_{i}\in\Gamma\text{ with }c\geq g_{i}%
(x)\downarrow f(x)\text{ for }x\in S\right\}.$$

\begin{proposition}
For $g\in\hat{\Gamma}_+\cup \hat{\Gamma}_-$, we have 
\begin{equation}
\label{eqn:bineq}
\int_{S}gd\mu\leq G_{\Gamma}(\mu\lVert\nu)+\log\int_{S}e^{g}d\nu. 
\end{equation}

\end{proposition}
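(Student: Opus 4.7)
The plan is to prove the inequality separately for $g\in\hat{\Gamma}_+$ and $g\in\hat{\Gamma}_-$, and in each case pass to the limit in the bounded inequality
\[
\int_S g_i d\mu \leq G_\Gamma(\mu\lVert\nu) + \log\int_S e^{g_i}d\nu,
\]
which holds for every $g_i\in\Gamma$ directly from the definition of $G_\Gamma$. The main tool throughout is the monotone convergence theorem, which applies on both sides thanks to the uniform lower (resp.\ upper) bound $c$ built into the definitions of $\hat{\Gamma}_+$ and $\hat{\Gamma}_-$.

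For $g\in\hat{\Gamma}_+$, fix an approximating sequence $g_i\in\Gamma$ with $c\leq g_i(x)\uparrow g(x)$ pointwise. Since $g_i-c\geq 0$ is increasing to $g-c$, MCT yields $\int_S g_i d\mu \uparrow \int_S g d\mu$ (allowing the value $+\infty$). Likewise $e^{g_i}\geq 0$ is increasing to $e^g$, so $\int_S e^{g_i}d\nu \uparrow \int_S e^g d\nu$ (again allowing $+\infty$), and then continuity of $\log$ on $(0,\infty]$ gives $\log\int_S e^{g_i}d\nu\uparrow \log\int_S e^g d\nu$. Passing to the limit in the elementary inequality above yields \eqref{eqn:bineq}.

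For $g\in\hat{\Gamma}_-$, take $g_i\in\Gamma$ with $c\geq g_i(x)\downarrow g(x)$. Apply MCT to the non-negative increasing sequences $c-g_i\uparrow c-g$ and $e^c-e^{g_i}\uparrow e^c-e^g$ to obtain $\int_S g_i d\mu\downarrow \int_S g d\mu$ (possibly $-\infty$) and $\int_S e^{g_i}d\nu\downarrow \int_S e^g d\nu$ (possibly $0$). Whenever $\int_S e^g d\nu>0$, continuity of $\log$ gives $\log\int_S e^{g_i}d\nu\downarrow\log\int_S e^g d\nu$ and passing to the limit again gives \eqref{eqn:bineq}.

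The only nontrivial obstacle is the degenerate edge case $\int_S e^g d\nu=0$ in the $\hat{\Gamma}_-$ part, for which the right side of \eqref{eqn:bineq} is $-\infty$. Here one must verify that $\int_S g d\mu=-\infty$ as well, but this follows for free: in that case $\log\int_S e^{g_i}d\nu\to -\infty$, so the inequality $\int_S g_i d\mu\leq G_\Gamma(\mu\lVert\nu)+\log\int_S e^{g_i}d\nu$ forces $\int_S g_i d\mu\to -\infty$, and hence by monotone convergence $\int_S g d\mu=-\infty$, so both sides of \eqref{eqn:bineq} equal $-\infty$ and the inequality holds trivially on the extended real line.
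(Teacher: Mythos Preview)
Your proof is correct and follows exactly the paper's approach: start from the inequality for $g_i\in\Gamma$, then pass to the limit via the monotone convergence theorem on both sides. In fact you are more explicit than the paper, which dispatches the $\hat{\Gamma}_-$ case with ``the reasoning is essentially the same'' and does not single out the degenerate edge case $\int_S e^g\,d\nu=0$ that you handle; your treatment of that case is fine under the implicit assumption $G_\Gamma(\mu\lVert\nu)<\infty$ (otherwise the right-hand side of \eqref{eqn:bineq} is trivially $+\infty$ or undefined, and the bound carries no content anyway).
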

\begin{proof}
The proof is straightforward. Take $g\in\hat{\Gamma}_+$. Then there exist $g_i\in\Gamma$, which are bounded below, and increase to $g$ pointwise in $S$. By monotone convergence theorem, 
$$\lim_{i\to\infty}\int_S g_i d\mu=\int_S g d\mu,$$
and 
$$\lim_{i\to\infty}\int_S e^{g_i}d\nu = \int_S e^g d\nu.$$
Since $g_i\in\Gamma$, for all $i$
$$\int_S g_i d\mu \leq G_\Gamma(\mu\lVert\nu) + \log\int_S e^{g_i}d\nu.$$
Taking $i\to\infty$ in the last display gives \eqref{eqn:bineq}. 
For $g\in \hat{\Gamma}_-$ the reasoning is essentially the same.
\end{proof}

\vspace{\baselineskip}
In the case when $\Gamma=\mathrm{Lip}(c,S;C_b(S))$, where $c$ satisfies the conditions introduced in Section \ref{OT}, we can get a stronger version of the result.
The proof is essentially the same as in Lemma \ref{beyond_bounded}, and is omitted.

\begin{proposition}
Assume $c:S\times S\to \mathbb{R}\cup\{+\infty\}$ satisfies Conditions  \ref{con:crep}, \ref{mea-det}, \ref{finite} and \ref{Cond:c_cont}. Fix $\mu,\nu\in L^{1}(a)$. Then for $g\in\mathrm{Lip}(c,S)$
\[
\int_{S}gd\mu\leq G_{\Gamma}(\mu\lVert\nu)+\log\int_{S}e^{g}d\nu.
\]
\end{proposition}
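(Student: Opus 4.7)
The plan is to mimic the truncation argument used in Lemma \ref{beyond_bounded}, extending the bound from $g\in\Gamma=\mathrm{Lip}(c,S;C_b(S))$ to the unbounded class $\mathrm{Lip}(c,S)$ by approximating $g$ uniformly by bounded Lipschitz functions and passing to the limit under the integral sign on both sides of the inequality.

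First I would establish the necessary integrability. For any $g\in\mathrm{Lip}(c,S)$, fix a reference point $x_0\in S$; then $|g(x)|\leq |g(x_0)|+c(x_0,x)\leq |g(x_0)|+a(x_0)+a(x)$ by Condition \ref{finite}. Since $\mu,\nu\in L^1(a)$, this gives $\int_S |g|\,d\mu<\infty$ and $\int_S |g|\,d\nu<\infty$. I would then dispose of the trivial case: if $\int_S e^g\,d\nu=\infty$, then $\log\int_S e^g\,d\nu=\infty$ and the target inequality holds because $G_\Gamma(\mu\lVert\nu)\geq 0$. So from here on I assume $\int_S e^g\,d\nu<\infty$.

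Next I would introduce the truncation $g_n\doteq \max(\min(g,n),-n)$. A one-line check shows $g_n\in C_b(S)$ and $g_n(x)-g_n(y)\leq c(x,y)$ (truncation preserves the Lipschitz inequality), so $g_n\in\Gamma$. The definition of $G_\Gamma$ then yields, for every $n$,
\[
\int_S g_n\,d\mu - \log\int_S e^{g_n}\,d\nu \leq G_\Gamma(\mu\lVert\nu).
\]
To pass to the limit on the left, I invoke the dominated convergence theorem twice: for $\int g_n\,d\mu\to \int g\,d\mu$ the dominator is $|g|\in L^1(\mu)$ by the integrability step; and for $\int e^{g_n}\,d\nu\to\int e^g\,d\nu$ the key observation is that on $\{g\geq -n\}$ one has $g_n\leq g$ so $e^{g_n}\leq e^g$, while on $\{g<-n\}$ one has $e^{g_n}=e^{-n}\leq 1$, giving the dominator $e^g+1\in L^1(\nu)$.

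The main (and essentially only) obstacle is handling the exponential: one must avoid needing $\int e^{|g|}\,d\nu<\infty$, which is not guaranteed. The simple pointwise bound $e^{g_n}\leq e^g+1$ described above is what makes the argument go through. Once the two limits are taken, combining them with the continuity of $\log$ on $(0,\infty)$ gives
\[
\int_S g\,d\mu - \log\int_S e^g\,d\nu \leq G_\Gamma(\mu\lVert\nu),
\]
which is the claimed inequality.
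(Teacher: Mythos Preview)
Your proposal is correct and is precisely the truncation argument of Lemma~\ref{beyond_bounded}; the paper itself omits the proof, stating only that it ``is essentially the same as in Lemma~\ref{beyond_bounded}.'' The one extra step you supply---deriving $\int_S|g|\,d\mu<\infty$ from $\mu\in L^1(a)$ and Condition~\ref{finite} rather than assuming it---is exactly what is needed to reduce to that lemma.
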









\subsection{Decomposition and scaling properties}

A property of great importance in applications of relative entropy is the chain rule.
When probability measures can be decomposed,
such as when Markov measures on a path space are written as the repeated
integration with respect to transition kernels,
the chain rule allows one to decompose the relative entropy of two such measures on path space in terms of the simpler relative entropies of the transition kernels.
This decomposition also exhibits important scaling properties of relative entropy, e.g.,
that for such Markov measures on path space the relative entropy scales proportionate to the number of time steps.

Except in special circumstances, optimal transport metrics do not possess a property like the chain rule, 
and it is therefore not to be expected that $\Gamma$-divergence would either. 
However,
if one considers certain classes of functions on path space, then one can show there are analogous decomposition and scaling properties. 
In this section we will discuss a setting relevant to many applications,
though the results have many analogues and possible generalizations.

As usual, we assume that $S$ is a Polish space, and let $p:S \times \mathcal{B}(S)$ be a probability transition kernel:
\begin{itemize}
    \item for every $A \in \mathcal{B}(S)$ the map $x \rightarrow p(x,A)$ is Borel measurable, and
    \item for every $x \in S$, $p(x,\cdot)$ is in $\mathcal{P}(S)$.
\end{itemize}
The quantities of interest are large and infinite time averages,
both with respect to time and the underlying distribution,
and we wish to bound in a tight fashion the error in such quantities due to model misspecification.
Thus if $q$ is some other transition kernel, then we seek useful bounds on differences of the form 
\[
\frac{1}{cT}\log E^{\gamma,p} \left[ e^{c\sum_{i=1}^T f({X}_i)} \right]-E^{\theta,q} \left[ \frac{1}{T}\sum_{i=1}^T f({X}_i) \right] ,
\]
where $E^{\gamma,p}$ indicates that the chain uses transition kernel $p$ and initial distribution $\gamma$, and similarly for $E^{\theta,q}$.
Under conditions, relative entropy can provide useful bounds when $q(x,\cdot) \ll p(x,\cdot)$ for a suitable set of $x \in S$.
One question then is under what conditions will the $\Gamma$-divergence allow one to weaken the absolute continuity restriction.
It is also worth noting that even when $q(x,\cdot) \ll p(x,\cdot)$ the bounds obtained using the $\Gamma$-divergence (when applicable) are tighter,
since it is never greater than relative entropy,
and in some cases the improvement can be dramatic. 
These issues will be explored in greater detail elsewhere.

It follows directly from discussion in earlier sections that even in the setting of product measures that one must restrict the class of functions $f$ under consideration. 
When considering Markov measures, the following definition is relevant.
\begin{definition}\label{f_allowed}
For a transition kernel $p$, let 
\[
\mathcal{R}(\Gamma,p) = \left\{ -\log \int_S e^{-g(y)}p(x,dy)-g(x)+a: g \in \Gamma\mbox{ and }a \in \mathbb{R}\right\}.
\]
\end{definition}
Then $\mathcal{R}(\Gamma,p)$ will determine the set of costs $f$ such that bounds can be obtained using the $\Gamma$-divergence.
In particular, we have the following.

\begin{theorem}
\label{thm:ergcost}
Suppose that $f \in \mathcal{R}(\Gamma,p)$ for some $g$ and $a$. Consider any transition kernel $q$ on $S$ and any stationary probability measure $\pi_q$ of $q$.
Then
\begin{align*}
\int_S f(x)\pi_q(dx)&\leq \int_S G_{\Gamma}(q(x,\cdot)\lVert p(x,\cdot)) \pi_q(dx) + a.
\end{align*}
\end{theorem}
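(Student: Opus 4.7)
The plan is to apply the basic dual inequality for the $\Gamma$-divergence pointwise in $x$, then integrate against the stationary measure $\pi_q$ and use stationarity to cancel a term.

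First I would recall that by Definition \ref{def:defofV}, for every $h \in \Gamma$ and every pair $\mu, \nu \in \mathcal{P}(S)$,
\[
\int_S h\, d\mu \leq G_\Gamma(\mu \lVert \nu) + \log \int_S e^h\, d\nu.
\]
Since $\Gamma$ is admissible, $-g \in \Gamma$ whenever $g \in \Gamma$, so applying the inequality with $h = -g$, $\mu = q(x,\cdot)$, and $\nu = p(x,\cdot)$ gives, for every $x \in S$,
\[
-\int_S g(y)\, q(x,dy) \leq G_\Gamma(q(x,\cdot)\lVert p(x,\cdot)) + \log \int_S e^{-g(y)}\, p(x,dy).
\]
Rearranging and subtracting $g(x)$ and adding $a$ to both sides,
\[
f(x) = -\log\!\int_S e^{-g(y)} p(x,dy) - g(x) + a \leq G_\Gamma(q(x,\cdot)\lVert p(x,\cdot)) + \int_S g(y)\, q(x,dy) - g(x) + a.
\]

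Next I would integrate both sides with respect to $\pi_q$. Since $g \in \Gamma \subset C_b(S)$ is bounded, all integrals that appear are finite, so this is legal. The key step is invoking stationarity of $\pi_q$ under $q$, namely $\int_S q(x,\cdot)\, \pi_q(dx) = \pi_q$, which gives
\[
\int_S\!\int_S g(y)\, q(x,dy)\, \pi_q(dx) = \int_S g(y)\, \pi_q(dy) = \int_S g(x)\, \pi_q(dx),
\]
so the two middle terms cancel. This yields
\[
\int_S f(x)\, \pi_q(dx) \leq \int_S G_\Gamma(q(x,\cdot)\lVert p(x,\cdot))\, \pi_q(dx) + a,
\]
which is exactly the claim.

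There is no real obstacle here: the proof is essentially a pointwise application of the $\Gamma$-divergence inequality, followed by an integration against $\pi_q$ in which stationarity does all the work. The only subtlety is that the definition of $\mathcal{R}(\Gamma, p)$ is tailored precisely so that $f(x)$ appears in the form produced by the rearranged inequality; this is what makes the $g$-dependent term collapse after integration. No integrability issues arise because $g$ is bounded and $G_\Gamma(q(x,\cdot)\lVert p(x,\cdot))$ is nonnegative and measurable in $x$ (lower semicontinuity from Lemma \ref{basic} together with measurability of $x \mapsto q(x,\cdot)$ suffices to make the right-hand side well defined in $[0,\infty]$).
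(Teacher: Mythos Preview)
Your proof is correct and follows essentially the same approach as the paper: obtain the pointwise inequality $f(x)\leq G_\Gamma(q(x,\cdot)\lVert p(x,\cdot))+\int_S g(y)\,q(x,dy)-g(x)+a$, integrate against $\pi_q$, and use stationarity to cancel the $g$-terms. The only cosmetic difference is that the paper reaches the pointwise inequality via the dual formula of Theorem~\ref{incredible} (writing $-\log\int_S e^{-g}\,dp(x,\cdot)$ as an infimum over $\mu$ and then specializing to $\mu=q(x,\cdot)$), whereas you obtain it directly from Definition~\ref{def:defofV}; these are equivalent and your route is arguably more direct.
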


\begin{remark}
If $p$ is ergodic then we recognize
\[
 f(x)=-\log \int_S e^{-g(y)}p(x,dy)-g(x)+a
\]
as the equation that uniquely characterizes the multiplicative cost 
\[
a=\lim_{N\rightarrow \infty}\frac{1}{N} \log E^pe^{-\sum_{i=0}^{N-1}f(X_i)},
\]
with $g$ a type of cost potential.
Note that for a given $f$ the function $g$ plays no role in the bound.
We need to check that $f$ is in the range of $\Gamma$ (which of course imposes restrictions on $f$),
but the bound does not depend on knowing the specific form of $g$.
\end{remark}

\begin{proof}
Since $g \in \Gamma$
\begin{align*}
g(x) &= -f(x) - \log\int_S e^{-g(y)}p(x,dy) + a\\
&= -f(x) + \inf_{q(x,dy)}\left[G_\Gamma (q(x,\cdot)\lVert p(x,\cdot)) + \int_S g(y)q(x,dy)\right]+a.
\end{align*}
For the given transition kernel $q$
$$g(x)\leq -f(x)+ \left[G_\Gamma (q(x,\cdot)\lVert p(x,\cdot)) + \int_S g(y)q(x,dy)\right]+a,$$
and integrating both sides with respect to $\pi_q(dx)$ and using $\int_S q(x,dy)\pi_q(dx)=\pi_q(dy)$ gives the result.
\end{proof}

\vspace{\baselineskip}
We next consider two examples to illustrate Definition $\ref{f_allowed}$.
\begin{example}
$S=\mathbb{R}$, $p(x,\cdot)\sim N(\alpha x,\sigma^2)$ is normal distribution with mean $\alpha x$ and variance $\sigma^2$, where $0<\alpha<1$. Let $g(x) = -bx^2 -cx-d$, for $b,c,d \in \mathbb{R}$.

Then direct computation gives that when $1-2b\sigma^2 >0$
\begin{align*}
& -\log\int_S e^{-g(y)}p(x,dy) - g(x) +a \\
&\quad=-\frac{b\alpha^2 x^2+c\alpha x + c^2\sigma^2/2}{1-2b\sigma^2}+bx^2+cx + a\\
&\quad = b\left(1- \frac{\alpha^2}{1-2b\sigma^2}\right)x^2 + c\left(1-\frac{\alpha}{1-2b\sigma^2}\right)x +a.
\end{align*}
Letting $k(b) = b (1- \frac{\alpha^2}{1-2b\sigma^2})$,
$$k'(b) = 1- \frac{\alpha^2}{(1-2b\sigma^2)^2}.$$

Since $1-2b\sigma^2 >0$, we can conclude $k$ reaches its maximum at $1-2b\sigma^2 = \alpha$, i.e., $b= \frac{1-\alpha}{2\sigma^2}$, where $k(b) = \frac{(1-\alpha)^2}{2\sigma^2}$.If $b\to \frac{1}{2\sigma^2}$ then $k(b)\to -\infty$. Also notice that when $b\neq \frac{1-\alpha}{2\sigma^2}$, we can pick $c$ to make the coefficient of $x$ to be any given number. Thus with $p(x,\cdot)\sim N(\alpha x, \sigma^2)$ and $\Gamma  =\left\{bx^2+cx+d:b,c,d\in\mathbb{R}\right\}$,  
$$R(\Gamma,p) = \left\{bx^2+cx+d: b<\frac{(1-\alpha)^2}{2\sigma^2},c,d\in\mathbb{R}\right\}\cup \left\{\frac{(1-\alpha)^2}{2\sigma^2}x^2 +d:d\in\mathbb{R}\right\}.$$
\end{example}

\begin{example}
$S=\left\{  x_{1},x_{2},\dots,x_{n}\right\}  $ is a finite space, and there is
a cost function $c:S\times S\rightarrow\mathbb{R}_{+}$ associated with this
space. Take $\Gamma=\mathrm{Lip}(c,S;C_{b}(S))$. Since $p$ is a transition
matrix we denote $p_{ij}=p(x_{i},x_{j})$ and $P=(p_{ij})_{1\leq i,j\leq n}%
\in\mathbb{R}^{n\times n}$.

A question we ask here is whether there exists $\sigma>0$ such that
$\sigma\Gamma\in R(\Gamma,p)$. In other words, does there exist $\sigma>0$
such that for any $f\in\sigma\Gamma$ we can find $g\in\Gamma$ and
$a\in\mathbb{R}$ such that%
\[
f(x_{i})=-g(x_{i})-\log\sum_{j=1}^{n}p(x_{i},x_{j})e^{-g(x_{j})}+a,\quad
i=1,2,\dots,n.
\]
If $R(\Gamma,p)$ includes such a neighborhood of zero, then when combined with
Theorem \ref{thm:ergcost} it would allow for sensitivity bounds, i.e., bounds on quantities of the
form
\[
\frac{d}{d\theta}\sum_{x\in S}\pi(\theta,x)f(x),
\]
where $f\in\Gamma$, $\pi(\theta,\cdot)$ is the stationary distribution of
$P(\theta)$, $P(0)=P$, and $P(\theta)$ depends smoothly on a vector of
parameters $\theta$ (see \cite{dupkatpanple}). In contrast with \cite{dupkatpanple}, we would not need that
the transition matrices be mutually absolutely continuous.

Since $S$ is finite we write $f_{i}$ for $f(x_{i})$ and let $f=(f_{1}%
,\ldots,f_{n})$, and similarly for $g$. Then the relation above defines a
mapping from $(g,a)$ to $f$, which we denote it by $f=\varphi(g,a)$. Note that%
\[
(0,0,\dots,0)=\varphi((0,0,\dots,0),0),
\]
The $(n,n+1)$ dimensional matrix of partial derivatives takes the form
\[
J=\left[  \left(  P-I\right)  ,\boldsymbol{1}\right]  ,
\]
where $I$ is the $n\times n$ identity matrix and $\boldsymbol{1}$ is a column
vector of ones. If we can show that $J$ is of full rank then the range of the
mapping defined by $J$, i.e., the linearization of $\varphi$ will be onto
$\mathbb{R}^{n}$. Then by the implicit function theorem there will be an open
neighborhood $U$ of $\mathbf{0}\in\mathbb{R}^{n}$ and a continuous function
$\gamma:\mathbb{R}^{n}\rightarrow\mathbb{R}^{n}$ such that for all $f\in U$,%
\[
f=\varphi(0,\gamma(f)).
\]
Since $O\doteq\left\{  (y_{1},y_{2},\dots,y_{n})|(0,y_{1},\dots,y_{n-1}%
)\in\mathrm{int}(\mathrm{Lip}(c,S)),y_{n}\in\mathbb{R}\right\}  $ is open,
$\mathbf{0}\in U\cap\gamma^{-1}(O)\subset\mathbb{R}^{n}$ is also open. Thus we
can pick $\sigma>0$ such that $\mathbf{0}\in \sigma\Gamma\subset U\cap\phi^{-1}(O)$. So we have shown the existence of $\sigma>0$ such that
$\sigma\Gamma\in R(\Gamma,p)$.
\end{example}

Whether or not $J$ is of full rank will depend on the structure of $P$. We
have the following lemma.

\begin{lemma}
Suppose that $S=\bar{S}\cup M$, where $M$ consists of the transient states, and
that when restricted to $\bar{S}$, $P$ is ergodic. Then $J$ is of full rank.
\end{lemma}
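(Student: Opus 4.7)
The plan is to show that the $n \times (n+1)$ matrix $J = [(P-I),\boldsymbol{1}]$ has full row rank by verifying that its rows are linearly independent. Equivalently, I would show that if $y \in \mathbb{R}^n$ satisfies $y^T J = 0$, then $y = 0$. Unpacking the block structure, $y^T J = 0$ is equivalent to the two conditions
\[
y^T P = y^T \qquad \text{and} \qquad y^T \boldsymbol{1} = 0,
\]
so $y$ must be a left eigenvector of $P$ with eigenvalue $1$ whose entries sum to zero.

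Next I would invoke the hypothesis that $S = \bar{S} \cup M$ with $M$ consisting of transient states and $P|_{\bar{S}}$ ergodic. By classical finite-state Markov chain theory (equivalently, Perron--Frobenius applied to the irreducible stochastic block on $\bar{S}$), the chain has a unique stationary distribution $\pi$, which is strictly positive on $\bar{S}$ and zero on $M$. Moreover the left $1$-eigenspace of $P$ is one dimensional, spanned by $\pi$. The key point is that any left $1$-eigenvector $y$ must vanish on $M$: writing $P$ in block form according to the partition $\bar{S}\cup M$, the transient block $P_{MM}$ has spectral radius strictly less than $1$, so the restriction of $y$ to $M$ satisfies a linear system forcing it to be zero; the restriction of $y$ to $\bar{S}$ is then a left $1$-eigenvector of the irreducible block $P_{\bar{S}\bar{S}}$, and hence a scalar multiple of $\pi|_{\bar{S}}$.

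Therefore $y = c\pi$ for some $c \in \mathbb{R}$. The second condition $y^T \boldsymbol{1} = 0$ becomes $c \sum_i \pi_i = c = 0$, since $\pi$ is a probability distribution, and thus $y = 0$. This establishes that the rows of $J$ are linearly independent, so $J$ has full rank $n$.

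The only delicate step is the uniqueness claim for the left $1$-eigenspace in the presence of transient states; this is standard Markov chain theory, but one must separately handle the $M$-coordinates using the fact that $I - P_{MM}$ is invertible (because $P_{MM}^k \to 0$ as $k \to \infty$), rather than appealing directly to irreducibility of the full matrix $P$.
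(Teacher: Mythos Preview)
Your proof is correct and is essentially the same argument as the paper's, just phrased dually: the paper shows the column space of $J$ is all of $\mathbb{R}^n$ by invoking the Fredholm alternative (the range of $P-I$ is $\{b:\langle b,\pi\rangle=0\}$, and $\boldsymbol{1}$ is not in this hyperplane since $\langle\boldsymbol{1},\pi\rangle=1$), whereas you show the left null space of $J$ is trivial by identifying the left $1$-eigenspace of $P$ as $\mathrm{span}\{\pi\}$ and noting $\pi^{T}\boldsymbol{1}=1\neq 0$. Both reduce to the same two facts, and your version is slightly more self-contained in spelling out why the left $1$-eigenspace is one dimensional in the presence of the transient block.
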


\begin{proof}
Let $\pi$ denote the stationary distribution of $P$. Then interpreting $\pi$
as a column vector, it is the unique vector in the null space of $(P-I)^{T}$.
According to the Fredholm alternative, the range of $(P-I)$ is the $n-1$
dimensional collection of vectors $b\in\mathbb{R}^{n}$ such that $\left\langle
b,\pi\right\rangle =0$. Now $\left\langle \boldsymbol{1},\pi\right\rangle >0$,
which shows that $\boldsymbol{1}$ is \textit{not} in the range of $(P-I)$.
Therefore the range of $J$ is all of $\mathbb{R}^{n}$.
\end{proof}

\vspace{\baselineskip}
To give a simple example of how the $\Gamma$-divergence could be used for
model simplification, consider the situation where we are given an ergodic
chain $P$ with state space $\bar{S}$, and would like to replace $P$ by a chain
$Q$ with state space $S=\bar{S}\cup M$, where the new states are intended to
replace a (possibly large) number of states in $\bar{S}$, with the goal being
to maintain good approximation of certain functionals of the stationary
distribution. If $\pi_{q}$ denotes the stationary distribution of $Q$ on $S$
and $\pi_{p}$ that of $P$ on $\bar{S}$, then one could not use relative
entropy to obtain any bounds. Suppose we were to extend $P$ to $\bar{S}\cup M$
(while keeping $P$ as the transition matrix), by making all states in $M$
transient. Then one could use the  $\Gamma$-divergence as long as the
functionals of interest are in $R(\Gamma,p)$ (with respect to the extended
transition probabilities). Note that the location of the new states would be
relevant to this question, since the costs $f$ depend on these locations.
Similarly, one could do sensitivity bounds for non-absolutely continuous
transitions by using such a device.

\section{Conclusion}
In this paper, we defined a new divergence by starting with a variational representation for relative entropy and placing additional restrictions on the collection of test functions used in the representation,
so as to relax the requirement of absolute continuity.
Basic qualitative properties of the divergence were investigated, as well as its relationship with optimal transport metrics. 
Future work will use the divergence to develop uncertainty quantification bounds, sensitivity bounds and methods for model approximation and simplification for stochastic for models without the  absolute continuity requirement.
Also needed is further investigation of qualitative and computational aspects of the $\Gamma$-divergence.

\bibliographystyle{plain}
\bibliography{main}

\vspace{\baselineskip}

\textsc{\noindent P. Dupuis\newline Division of Applied Mathematics\newline
Brown University\newline Providence, RI 02912, USA\newline email:
Paul\_Dupuis@brown.edu \vspace{\baselineskip} }

\textsc{\noindent Y. Mao\newline Department of Mathematics\newline
Harvard University\newline
Cambridge, MA 02138, USA\newline
email: mao@math.harvard.edu}

\end{document}